  \theoremstyle{definition}
 \newtheorem{Theorem}{Theorem}[section]
 \newtheorem{Corollary}[Theorem]{Corollary}
 \newtheorem{Lemma}[Theorem]{Lemma}
 \newtheorem{Proposition}[Theorem]{Proposition}
\newtheorem{UHypothesis}[Theorem]{Universal Hypothesis}
\newtheorem{Main Theorem}[Theorem]{Main Theorem}
\newtheorem{Definition}[Theorem]{Definition}
\newtheorem{Remark}[Theorem]{Remark}
\newtheorem{Example}[Theorem]{Example}
\newtheorem{Notation}[Theorem]{Notation}
\newtheorem{Terminology}[Theorem]{Terminology}
\newtheorem{Discussion}[Theorem]{Discussion}
\numberwithin{equation}{subsection}
\newcommand{\up}[1]{\left\lceil #1 \right\rceil}
\newcommand{\mfrak}[1]{\mathfrak{#1}}
\newcommand{\tens}{\otimes}
\newcommand{\Spec}{\operatorname{Spec}}
\newcommand{\mSpec}{\operatorname{mSpec}}
\newcommand{\Support}{\operatorname{Supp}}
\newcommand{\Z}{\mathbb{Z}}
\renewcommand{\P}{\boldsymbol{P}}
\newcommand{\Pmax}{\bold{P}_{\text{max}}}
\renewcommand{\k}{\boldsymbol{k}}
\newcommand{\s}{\boldsymbol{s}}
\newcommand{\uu}{\boldsymbol{u}}
\renewcommand{\t}{\boldsymbol{t}}
\newcommand{\x}{\boldsymbol{x}}
\newcommand{\n}{\boldsymbol{\nu}}
\newcommand{\m}{\mfrak{m}}
\renewcommand{\a}{\mfrak{a}}
\newcommand{\0}{\boldsymbol{0}}
\renewcommand{\l}{\ell}
\newcommand{\LLambda}{\boldsymbol{\Lambda}}
\renewcommand{\1}{\textbf{1}}
\renewcommand{\bold}[1]{\mathchoice{\hbox{\boldmath $\displaystyle #1$}}
        {\hbox{\boldmath $\textstyle #1$}}
        {\hbox{\boldmath $\scriptstyle #1$}}
        {\hbox{\boldmath $\scriptscriptstyle #1$}}}
\newcommand{\lfpt}[2]{\bold{\operatorname{fpt}}_{#1}(#2)}
\newcommand{\fpt}[1]{\bold{\operatorname{fpt}}_{\m}(#1)}
\newcommand{\llct}[1]{\bold{\operatorname{lct}}_{\0}(#1)}
\newcommand{\lct}[1]{\bold{\operatorname{lct}}_{\0}(#1)}
\newcommand{\tr}[2]{\left \langle {#1} \right \rangle_{#2}}
\newcommand{\bracket}[2]{ {#1}^{[p^{#2}]}}
\newcommand{\characteristic}{\operatorname{char}}
\newcommand{\set}[1]{ \left\{ \, #1 \, \right\}}
\newcommand{\pair}[3]{\left( {#1}, {#3} \bullet {#2} \right)}
\renewcommand{\aa}{\bold{a}}
\renewcommand{\(}{\left(}
\renewcommand{\)}{\right)}
\renewcommand{\th}{\text{th}}
\newcommand{\new}[2]{ \nu_{#1} \( p^{#2} \)}
\newcommand{\vone}{\bold{1}}
\newcommand{\vleq}{\preccurlyeq}
\newcommand{\vl}{\prec}
\newcommand{\vgeq}{\succcurlyeq}
\newcommand{\sM}{\mathscr{M}} 
\newcommand{\EM}{\bold{\operatorname{E}}} 
\renewcommand{\vee}{\bold{\operatorname{v}}}
\newcommand{\digit}[2]{ {#1}^{(#2)}}
\newcommand{\convex}[1]{ {#1}^{\operatorname{convex}}}
\newcommand{\cone}[1]{{#1}^{\operatorname{cone}}}
\newcommand{\Newton}{\bold{N}}
\newcommand{\Nsigma}{\bold{\Lambda}}
\newcommand{\sC}{\mathscr{C}}
\newcommand{\sP}{\mathscr{P}}
\newcommand{\bv}{\bold{\operatorname{v}}}
\newcommand{\bw}{\bold{\operatorname{w}}}
\newcommand{\be}{\bold{\operatorname{e}}}
\newcommand{\bb}{\bold{\operatorname{b}}}
\newcommand{\bkappa}{\bold{\kappa}}
\newcommand{\eeta}{\bold{\eta}}
\newcommand{\LL}{\bold{\Lambda}}
\renewcommand{\ll}{\bold{\lambda}}
\newcommand{\fL}{f_{\LL}}
\newcommand{\GG}{\bold{\Gamma}}
\renewcommand{\L}{\mathbb{L}}
\begin{document}

\title
 {$F$-purity versus log canonicity for polynomials}
\author{ Daniel J. Hern\'andez }
\thanks{The author was partially supported by the National Science Foundation RTG grant number 0502170 at the University of Michigan.}

\begin{abstract} 
In this article, we consider the conjectured relationship between $F$-purity and log canonicity for polynomials over $\mathbb{C}$.  We associate to a collection $\sM$ of $n$ monomials a rational polytope $\P$ contained in $[0,1]^n$.  Using $\P$ and the Newton polyhedron associated to $\sM$, we define a non-degeneracy condition under which log canonicity and dense $F$-pure type are equivalent for all $\mathbb{C}^{\ast}$-linear combinations of the monomials in $\sM$. We also show that log canonicity corresponds to $F$-purity for very general polynomials.  Our methods rely on showing that the $F$-pure and log canonical threshold agree for infinitely primes, and we accomplish this by comparing these thresholds with the thresholds associated to their monomial ideals.  
\end{abstract}

\maketitle


\section*{Introduction}

Let $f \in \L[x_1, \cdots, x_m]$ be a polynomial over a field of characteristic $p>0$ with $f(\0)=0$.  Let $\bracket{\m}{e}: = \( x_1^{p^e}, \cdots, x_m^{p^e} \)$ denote the $e^{\th}$ Frobenius power of $\m := (x_1, \cdots, x_m)$.  The limit \begin{equation} \label{fptIntroD: e} \fpt{f} = \lim_{e \to \infty} \frac{ \max \set{ r : f^r \notin \bracket{\m}{e}}}{p^e} \end{equation} exists, and is called the \emph{$F$-pure threshold} of $f$ at $\m$ \cite{HW2002,TW2004, MTW2005}.  This invariant measures of the singularities of $f$ near $\0$, and is closely related to the theory of \emph{$F$-purity} and \emph{tight closure} \cite{HR1976, HH1990, HY2003}.  If $\sM$ is a collection of monomials, one may also define the $F$-pure threshold of the ideal generated by $\sM$, denoted $\fpt{\sM}$, and if $f$ is a $\L^{\ast}$-linear combination of the elements of $\sM$, one has the inequality \begin{equation} \label{intro: e} \fpt{f} \leq \min \set{ 1, \fpt{\sM} }. \end{equation} 

\noindent This inequality may be strict:  $\lfpt{\m}{u \cdot x^p + v \cdot y^p} = \frac{1}{p}$, while $\lfpt{\m}{x^p,y^p} = \frac{2}{p}$.

Suppose that $\sM$ consists of $n$ monomials in $m$ variables.  In Definition \ref{SplittingPolytope: D}, we associate to $\sM$ a rational polytope $\P$ contained in $[0,1]^n$ called the \emph{splitting polytope} of $\sM$.  The polytope $\P$ is closely related to the familiar \emph{Newton polyhedron} $\Newton \subseteq \mathbb{R}^{m}$ associated to $\sM$ and the geometry of each determines the value of $\fpt{\sM}$.  We call $\eeta \in \P$  \emph{maximal} if the sum of the entries of $\eeta$ is maximal among all coordinate sums of elements of $\P$ (see Definition \ref{MaxPoints: D}).  In this article, we identify conditions on $\Newton ,\P$ and $p$ under which equality holds in \eqref{intro: e}.  We summarize some of these results below.  In what follows, we assume that $f \in \L[x_1, \cdots, x_m]$ and is a $K^{\ast}$-linear combination of the monomials in $\sM$.

\begin{enumerate}
\item \label{pt1} If $\P$ contains a unique maximal point $\eeta$, then equality holds in \eqref{intro: e} whenever $(p-1) \cdot \eeta \in \mathbb{N}^n$.  This is a direct corollary of the Theorem \ref{MainTheorem}, which also produces a lower bound for $\fpt{f}$ when equality does not hold \eqref{intro: e}.
\item  \label{pt2} Suppose that $\min \set{ 1, \fpt{\sM} } = \fpt{\sM}$.  If $(p-1) \cdot \fpt{\sM} \in \mathbb{N}$, Proposition \ref{GeneralFixedp: P} states there exists a closed set $Z \subseteq \mathbb{A}^n_K$ such that equality holds in \eqref{intro: e} whenever the coefficients of $f$ are not in $Z$.  The condition $(p-1) \cdot \fpt{\sM} \in \mathbb{N}$ is necessary, as we saw in the example following \eqref{intro: e}. 
\item \label{pt4} Suppose instead that $\min \set{1, \fpt{\sM}} = 1$.  In the two dimensional case,  the failure of equality to hold in \eqref{intro: e} allows one to produce an explicit upper bound on $p$.  This statement is a corollary of Lemma \ref{Fedderfpt<1: L}, which gives the same conclusion in higher dimensions modulo some additional hypotheses and generalizes \cite[Lemma 2.3]{Fed1983}.
\suspend{enumerate}

We now switch gears and consider an invariant of singularities for polynomials defined over $\mathbb{C}$.  Let $f$ be a polynomial over $\mathbb{C}$ with $f(\0)=0$.  If $\lambda > 0$, the function $\frac{1}{|f|^{ \lambda}}$ has a pole at $\0$, and understanding how ``bad'' this pole is provides a measure of the singularities of $f$ at $\0$.  In particular, one may ask whether this function is $L^2$, which leads us to the definition of the \emph{log canonical threshold of $f$}, denoted $\lct{f}$: \[ \lct{f} =  \sup \set{ {\lambda} : \frac{1}{| f |^{2 \lambda}} \text{ is locally integrable at } \0}.\]

Log canonical thresholds can also be defined using information obtained via (log) resolution of singularities, and this invariant plays an important role in higher dimensional birational geometry \cite{BL2004, Laz2004}.  Remarkably, $F$-pure thresholds can be thought of as the positive characteristic analog of log canonical thresholds. \cite{Karen2000,HW2002,HY2003,Tak2004}.  We now briefly sketch the relationship between these two invariants.  If $f$ has rational coefficients, one may reduce them modulo $p$ to obtain polynomials $f_p$ over the finite fields $\mathbb{F}_p$ for $p \gg 0$.  Otherwise, one still obtains a \emph{family of positive characteristic models} $f_p$ over finite fields of characteristic $p$  via the process of \emph{reduction to positive characteristic}.  Using the results of \cite{HY2003}, it is observed in \cite{MTW2005} that 
\begin{equation} \label{IntroLimit: e} \fpt{f_p} \leq \llct{f} \text{ for } p \gg 0, \text{ and that } \lim_{p \to \infty} \lfpt{\m}{f_p} = \llct{f}.\end{equation} 

\begin{Example}
\label{CuspExample: E}
If $f = x^2 + y^3$, then $\llct{f} = \frac{5}{6}$, and 
\[ \lfpt{\m}{f_p} = \begin{cases} 1/2 & p=2 \\ 2/3 & p=3 \\ 5/6 & p \equiv 1 \bmod 6 \\ \frac{5}{6}-\frac{1}{6p} & p \equiv 5 \bmod 6 \end{cases}. \]  
\end{Example}

We say that \emph{log canonicity equals dense $F$-purity for $f$} whenever $\fpt{f_p} = \lct{f}$ for infinitely many $p$; see Remark \ref{TerminologyJustification: R} for a justification of this terminology.  Note that log canonicity equals dense $F$-purity for $x^3 + y^2$, as $p \equiv 1 \bmod 6$ for infinitely many primes.

\begin{Example}  Let $f \in \mathbb{Q}[x,y,z]$ be a form of degree $3$ with isolated singuarity at $\0$, so that $f$ defines an elliptic curve $E \subseteq \mathbb{P}^2$.  Then, $\lct{f} = 1$, and $\fpt{f_p} = 1$ if and only if $E_p = \mathbb{V} \( f_p \)$ is not \emph{supersingular}.  That log canonicity equals dense $F$-purity in this example follows from work of Serre.  \cite[Example 4.6]{MTW2005}.
\end{Example}

It is conjectured that log canonicity equals dense $F$-purity for all polynomials, and verifying this correspondence is a long-standing open problem \cite{Fed1983,Karen1997, EM2006}.  We now summarize the results in this article related to this correspondence.

\resume{enumerate}
\item \label{pt4} Let $\sM$ again denote a collection of monomials, and suppose that its associated polytope $\P$ contains a unique maximal point.  Theorem \ref{UniquePointTheorem} then states that log canonicity equals dense $F$-purity for every $\mathbb{C}^{\ast}$-linear combination of the monomials of $\sM$.
\item  \label{pt5} In Theorem \ref{VeryGeneralTheorem}, we also see that log canonicity equals dense $F$-purity for all polynomials whose coefficients form an algebraically independent set over $\mathbb{Q}$.  
\end{enumerate}

We now briefly outline our methods for proving the statements in \eqref{pt4} and \eqref{pt5}. As with $F$-pure thresholds, one may extend the definition of the log canonical threshold to any ideal vanishing at $\0$.  If $\sM$ denotes a set of monomials and $f$ is a $\mathbb{C}^{\ast}$-linear combination of the members of $\sM$, then $\lct{f} \leq \min \set{ 1, \lct{\sM} }$.  The value of $\fpt{\sM}$ has a natural formula in terms of the geometry of the splitting polytope $\P$, while $\lct{\sM}$ can be computed via the Newton polyhedron $\Newton$.  On the other hand, the connection between these two polytopes allows one to conclude that $\fpt{\sM} = \lct{\sM}$, a formula that also follows from the general statements in \cite{HY2003}.  We continue to assume that $f$ is a $\mathbb{C}^{\ast}$-linear combination of the members of $\sM$.  Using the results referenced in \eqref{pt1} and \eqref{pt2}, we are able to show that $\fpt{f_p} = \min \set{1, \fpt{\sM}}$ for infinitely many $p$, and applying the relations in \eqref{IntroLimit: e} shows that for such $p$, 
\[ \fpt{f} \leq \lct{f} \leq \min \set{1, \lct{\sM} } = \min \set{ 1, \fpt{f} } = \fpt{f},\] forcing equality throughout.  

\subsection*{Acknowledgments} Many of the results contained in this article appear in my Ph.D. thesis, completed at the University of Michigan.  I would like to thank my advisor, Karen Smith, for all her guidance during this project.  I am also grateful to Emily Witt, both for her support and for her assistance with many of the details in this article.  I would also like to thank Mel Hochster for talking with me about $F$-purity, and Vic Reiner for answering some of my questions regarding convex geometry.

\section{Base $p$ expansions}

\begin{Definition}
\label{Expansion: D}  Let $\alpha \in (0,1]$.  A \emph{non-terminating base $p$ expansion of $\alpha$} is the unique expression of the form $\alpha = \sum_{e \geq 1} \frac{a_e}{p^e}$ with the property that the integers $a_e$ are in $[0, p-1]$ and are all not eventually zero.  The number $a_e$ is called the \emph{$e^{\text{\th}}$ digit  of $\alpha$ in base $p$}. 
\end{Definition}

For example, $1 = \sum_{e\geq 1} \frac{p-1}{p^e}$ is a non-terminating base $p$ expansion of $1$.

\begin{Definition}
\label{TruncationDefinition}

Let $\alpha \in (0,1]$, and fix a prime $p$.

\begin{enumerate}  
\item $\digit{\alpha}{e}$ will always denote the $e^{\th}$ digit of $\alpha$ in base $p$.  By convention, $\digit{\alpha}{0}  = \digit{0}{e} = 0$.
\item We call $\tr{\alpha}{d} : = \sum_{e=1}^{d} \frac{ \digit{\alpha}{e}}{p^e}$ the \emph{$e^{\th}$ truncation of $\alpha$ in base $p$}.
\item For $\bold{\alpha}=\left( \alpha_1, \cdots, \alpha_n \right) \in  [0,1]^n$, we set $\tr{\bold{\alpha}}{e}:= \left( \tr{\alpha_1}{e}, \cdots, \tr{\alpha_n}{e} \right)$.
\end{enumerate}
\end{Definition}

\begin{Terminology} 
When referring to the $e^{\th}$ digit (respectively, truncation) of $\alpha$, we will always mean the $e^{\th}$ digit (respectively, truncation) in some base $p$ which will always be obvious from the context (and will often be equal to the characteristic of an ambient field).  This explains the absence of the base prime $p$ in the notation $\digit{\alpha}{e}$ and $\tr{\alpha}{e}$.
\end{Terminology}

\begin{Lemma}
\label{SimplifiedTruncation: L}
If $(p^e -1) \cdot \alpha \in \mathbb{N}$ for some $e$, then $(p^e-1) \cdot \alpha = p^e \cdot \tr{\alpha}{e}$.  
\end{Lemma}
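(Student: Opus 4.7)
The plan is to exploit the geometric-series identity
\[
\frac{1}{p^e - 1} \;=\; \sum_{k \geq 1} \frac{1}{p^{ek}},
\]
together with the fact that the hypothesis $(p^e - 1) \cdot \alpha \in \mathbb{N}$ lets us write $\alpha = N/(p^e-1)$ for some integer $N$ with $1 \leq N \leq p^e - 1$ (since $\alpha \in (0,1]$). I would then expand $N$ in base $p$ with exactly $e$ digits, say $N = \sum_{j=1}^{e} b_j \, p^{e-j}$ with $b_j \in \{0,1,\dots,p-1\}$.

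Multiplying this expansion of $N$ by the geometric series for $1/(p^e-1)$ and re-indexing the double sum, I would produce an expansion
\[
\alpha \;=\; \sum_{k \geq 1} \sum_{j=1}^{e} \frac{b_j}{p^{(k-1)e + j}},
\]
which exhibits a base $p$ expansion of $\alpha$ that is periodic with period $e$ and whose first $e$ digits are precisely $b_1, b_2, \dots, b_e$. Because $N \geq 1$, at least one $b_j$ is nonzero, so the periodicity guarantees infinitely many nonzero digits, and this expansion is therefore non-terminating.

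Uniqueness of the non-terminating base $p$ expansion (Definition~\ref{Expansion: D}) then forces $\digit{\alpha}{j} = b_j$ for $1 \le j \le e$, hence
\[
\tr{\alpha}{e} \;=\; \sum_{j=1}^{e} \frac{b_j}{p^{j}} \;=\; \frac{N}{p^e} \;=\; \frac{(p^e-1)\cdot \alpha}{p^e},
\]
which rearranges to the desired identity $(p^e - 1)\alpha = p^e \cdot \tr{\alpha}{e}$. The only subtle point is verifying that the constructed expansion really is the non-terminating one; the boundary case $\alpha = 1$, corresponding to $N = p^e - 1$ and all $b_j = p-1$, is exactly the standard non-terminating expansion $1 = \sum_{e\geq 1}(p-1)/p^e$ noted after Definition~\ref{Expansion: D}, so no special handling is needed. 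This is really the only place care is required; the rest is a direct computation with the geometric series.
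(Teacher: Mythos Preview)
Your proposal is correct and follows essentially the same approach as the paper: the paper's proof simply asserts (leaving verification to the reader) that the base $p$ digits of $\alpha$ are periodic with period $e$, and your geometric-series argument is precisely a clean verification of that claim, from which the identity $(p^e-1)\alpha = p^e\,\tr{\alpha}{e}$ follows immediately.
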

\begin{proof}
This follows from the observation (whose verification is left to the reader) that if $(p^e-1) \cdot \alpha \in \mathbb{N}$, then the digits of $\alpha$ (in base $p$) are periodic and repeat after $e$ terms.
\end{proof}

\begin{Lemma}
\label{TruncationLemma} If $\alpha \in (0,1]$, then $\tr{\alpha}{e} \in \frac{1}{p^e} \cdot \mathbb{N}$, and $\tr{\alpha}{e} < \alpha \leq \tr{\alpha}{e} + \frac{1}{p^e}$.  Furthermore, if $(p-1) \cdot \alpha \in \mathbb{N}$, then $\digit{\alpha}{e} = (p-1) \cdot \alpha$ for every $e \geq 1$.
\end{Lemma}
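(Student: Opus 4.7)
The plan is to verify all three assertions by working directly with the definition of the non-terminating base $p$ expansion, using nothing more than the geometric series $\sum_{k \geq 1} p^{-k} = (p-1)^{-1}$.

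For the containment $\tr{\alpha}{e} \in \frac{1}{p^e} \mathbb{N}$, I would multiply out $p^e \cdot \tr{\alpha}{e} = \sum_{k=1}^{e} p^{e-k} \digit{\alpha}{k}$ and observe that this is a nonnegative integer since each digit is an integer in $[0,p-1]$. For the inequality $\tr{\alpha}{e} < \alpha \leq \tr{\alpha}{e} + \frac{1}{p^e}$, I would write
\[
\alpha - \tr{\alpha}{e} = \sum_{k > e} \frac{\digit{\alpha}{k}}{p^k},
\]
which is nonnegative. The strict lower inequality follows from the non-terminating condition in Definition \ref{Expansion: D}, which guarantees that $\digit{\alpha}{k} > 0$ for some $k > e$. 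The upper inequality follows by bounding each digit by $p-1$ and summing the geometric series:
\[
\sum_{k > e} \frac{\digit{\alpha}{k}}{p^k} \leq (p-1) \sum_{k > e} \frac{1}{p^k} = \frac{1}{p^e}.
\]

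For the final assertion, suppose $(p-1)\cdot\alpha \in \mathbb{N}$. Since $\alpha \in (0,1]$, we may write $\alpha = \frac{a}{p-1}$ for a unique integer $a$ with $1 \leq a \leq p-1$. Using the geometric series identity $\frac{1}{p-1} = \sum_{e \geq 1} \frac{1}{p^e}$, I obtain $\alpha = \sum_{e \geq 1} \frac{a}{p^e}$. Since $a \in [0,p-1]$ and $a \geq 1$ is not eventually zero, this expression meets the criteria of Definition \ref{Expansion: D}, so by the uniqueness of the non-terminating base $p$ expansion it must be \emph{the} expansion of $\alpha$. Hence $\digit{\alpha}{e} = a = (p-1)\cdot\alpha$ for every $e \geq 1$.

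No step poses any real obstacle; the only subtle point is ensuring that the non-terminating convention is invoked correctly — once for the strict inequality $\tr{\alpha}{e} < \alpha$, and once more to justify that the constant-digit expansion in the last step is indeed the one selected by the definition.
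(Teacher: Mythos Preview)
Your proof is correct and follows essentially the same approach as the paper's. The paper dismisses the first two assertions as immediate from the definitions and, for the third, simply observes that multiplying the expansion $1=\sum_{e\geq 1}\frac{p-1}{p^e}$ termwise by $\alpha$ yields the desired non-terminating expansion; your argument spells out the same idea in more detail.
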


\begin{proof}  The first two assertions follow from the definitions.  For the third, note that the non-terminating base $p$ expansion for $\alpha$ can be obtained by multiplying each term in the non-terminating base $p$ expansion of $1 = \sum_{e \geq 1} \frac{p-1}{p^e}$ by $\alpha$.
\end{proof}

\begin{Definition}
\label{CarryingDefinition}
Let $(\alpha_1, \cdots, \alpha_n) \in [0,1]^n$. We say  \emph{the digits of $\alpha_1, \cdots, \alpha_n$ add without carrying} (in base $p$) if  $\digit{\alpha_1}{e} + \cdots + \digit{\alpha_n}{e} \leq p-1$ for every $e \geq 1$.
For $(k_1, \cdots, k_n) \in \mathbb{N}^n$, we say that the digits of $k_1, \cdots, k_n$ \emph{add without carrying} (in base $p$) if the analogous condition holds for the digits in the unique base $p$ expansions of the integers $k_1, \cdots, k_n$.
\end{Definition}

\begin{Remark}  We point out that the digits of $\alpha_1, \cdots, \alpha_n$ add without carrying if and only if the digits of the integers $p^e \tr{\alpha_1}{e}, \cdots, p^e \tr{\alpha_n}{e}$ add without carrying for every $e \geq 1$.
\end{Remark}

The concept of adding without carrying is relevant  in light of the following classical result.

\begin{Lemma}\cite{Dickson, Lucas} 
\label{LucasLemma}  
 Let $\(k_1, \cdots k_n \) \in \mathbb{N}^n$, and set $N= \sum_i k_i$.  Then, the multinomial coefficient $\binom{N}{\k} := \frac{ N! }{k_1 ! \cdots k_n!} \not \equiv 0 \mod p$ if and only if the digits of $k_1, \cdots, k_n$ add without carrying (in base $p$). 
\end{Lemma}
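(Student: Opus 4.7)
The plan is to prove the lemma via Legendre's formula for the $p$-adic valuation of a factorial together with column-by-column bookkeeping of the base $p$ addition of the $k_i$. Write $s_p(n) := \sum_{e \geq 0} \digit{n}{e}$ for the sum of the base $p$ digits of $n$, and recall Legendre's identity $v_p(n!) = (n - s_p(n))/(p-1)$.

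First, I would apply Legendre's formula to each factor of $\binom{N}{\k} = N!/(k_1! \cdots k_n!)$. Using $N = \sum_i k_i$ the denominators $p-1$ combine cleanly, yielding
\[ v_p \binom{N}{\k} = \frac{1}{p-1}\left( \sum_{i=1}^n s_p(k_i) - s_p(N) \right). \]
Since a positive integer is non-zero modulo $p$ exactly when its $p$-adic valuation vanishes, the lemma reduces to deciding when $s_p(N)$ equals the total digit sum $S := \sum_i s_p(k_i)$.

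Next, I would track the schoolbook addition of $k_1, \ldots, k_n$ in base $p$, column by column. Let $c_{e-1}$ denote the carry entering column $e$ (with $c_{-1} := 0$), and write $\sum_i \digit{k_i}{e} + c_{e-1} = p \cdot c_e + r_e$ with $0 \le r_e \le p-1$. Then $r_e = \digit{N}{e}$ by uniqueness of base $p$ expansions, and summing over $e$ together with the telescoping identity $\sum_e c_{e-1} = \sum_e c_e$ (valid because $c_{-1}=0$ and only finitely many $c_e$ are nonzero) gives $s_p(N) = S - (p-1)\sum_e c_e$. Substituting back into the valuation formula produces $v_p\binom{N}{\k} = \sum_e c_e$, which vanishes precisely when every carry $c_e$ equals zero, that is, when the digits of $k_1, \ldots, k_n$ add without carrying in the sense of Definition \ref{CarryingDefinition}.

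The only mild obstacle is the column-by-column bookkeeping: one must check that only finitely many carries are nonzero (immediate, since each $k_i$ has finitely many nonzero digits, hence so does $N$) and that the recursive definition $c_e = \lfloor (\sum_i \digit{k_i}{e} + c_{e-1})/p \rfloor$ does indeed reproduce the standard base $p$ expansion of $N$. An alternative route reduces the multinomial to a product of binomials via $\binom{N}{k_1, \ldots, k_n} = \binom{N}{k_1}\binom{N-k_1}{k_2,\ldots,k_n} \cdots$ and invokes Kummer's theorem in the binomial case, but the Legendre approach above handles all $n$ at once and exhibits $v_p \binom{N}{\k}$ transparently as the total number of carries.
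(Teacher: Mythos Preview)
Your argument is correct. Legendre's formula gives the valuation as claimed, and the column-by-column carry identity $s_p(N) = S - (p-1)\sum_e c_e$ is derived cleanly; the only step you leave implicit is the (trivial, inductive) observation that ``every carry $c_e$ vanishes'' is equivalent to the paper's Definition~\ref{CarryingDefinition}, namely $\sum_i \digit{k_i}{e} \leq p-1$ for all $e$, but with $c_{-1}=0$ this follows immediately.

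The paper itself gives no proof of this lemma: it is stated with a citation to the classical sources \cite{Dickson, Lucas} and used as a black box. Your Legendre/Kummer route is the standard modern proof and in fact yields the sharper statement $v_p\binom{N}{\k} = \sum_e c_e$, which is more than the lemma requires.
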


\begin{Lemma} 
\label{ConstantExpansionLemma}  Let $( \alpha_1, \cdots, \alpha_n) \in \mathbb{Q}^n \cap [0,1]^n$.
\begin{enumerate}
\item If $\alpha_1 + \cdots + \alpha_n \leq 1$,  there exist infinitely many primes $p$ for which the digits of $\alpha_1, \cdots, \alpha_n$ add without carrying (in base $p$).
\item Otherwise,  there exist infinitely many primes $p$ for which  $\digit{\alpha_1}{1} + \cdots + \digit{\alpha_n}{1} \geq p$.
\end{enumerate}
\end{Lemma}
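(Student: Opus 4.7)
The plan is to reduce both statements to a single mechanism: pick primes $p$ for which $(p-1)\alpha_i \in \mathbb{N}$ holds simultaneously for every $i$, so that Lemma \ref{TruncationLemma} forces each base-$p$ digit $\digit{\alpha_i}{e}$ to equal the constant $(p-1)\alpha_i$.  Once the digits are known explicitly, both parts become a direct comparison of integers.

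Writing each nonzero $\alpha_i = a_i/b_i$ in lowest terms, let $b$ be the least common multiple of the $b_i$.  The condition $(p-1)\alpha_i \in \mathbb{N}$ for all $i$ is then equivalent to the single congruence $p \equiv 1 \pmod{b}$, and Dirichlet's theorem on primes in arithmetic progressions supplies infinitely many such primes.  For each of them, Lemma \ref{TruncationLemma} yields
\[
\sum_{i=1}^n \digit{\alpha_i}{e} \;=\; (p-1) \bigl( \alpha_1 + \cdots + \alpha_n \bigr) \qquad \text{for every } e \geq 1.
\]

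For part (1), the hypothesis $\alpha_1 + \cdots + \alpha_n \leq 1$ immediately gives $\sum_i \digit{\alpha_i}{e} \leq p - 1$ for every $e$, which is exactly the non-carrying condition of Definition \ref{CarryingDefinition}.  For part (2), set $\varepsilon := \alpha_1 + \cdots + \alpha_n - 1 > 0$ and observe that $(p-1)(1 + \varepsilon) = p - 1 + (p-1)\varepsilon$ exceeds $p$ as soon as $p \geq 1 + 1/\varepsilon$.  Discarding the finitely many primes in our arithmetic progression that violate this bound still leaves infinitely many $p$ with $\sum_i \digit{\alpha_i}{1} \geq p$, as required.

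I do not anticipate any real obstacle: the rationality of the $\alpha_i$ pins down their base-$p$ digits \emph{exactly} along an entire arithmetic progression of primes, after which both cases collapse to the elementary inequalities above.  The only external input is Dirichlet's theorem.
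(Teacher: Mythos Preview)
Your proposal is correct and follows essentially the same approach as the paper: invoke Dirichlet to obtain infinitely many primes $p$ with $(p-1)\alpha_i \in \mathbb{N}$ for all $i$, use Lemma~\ref{TruncationLemma} to identify every digit $\digit{\alpha_i}{e}$ with the constant $(p-1)\alpha_i$, and then read off both parts directly. The paper's proof simply states ``the lemma follows from this observation,'' whereas you have spelled out the inequality for part~(2), including the need to discard finitely many small primes; this is exactly the missing detail, and your treatment of it is fine.
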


\begin{proof} 
By Dirichlet's theorem on primes in arithmetic progressions, $(p-1) \cdot (\alpha_1, \cdots, \alpha_n) \in \mathbb{N}^n$ for infinitely many primes $p$.  For such primes, Lemma \ref{TruncationLemma} shows that $\digit{\alpha_i}{e} = (p-1) \cdot \alpha_i$ are the digits of $\alpha_i$.  The lemma follows from this observation.
\end{proof}

\section{$F$-pure thresholds}
\label{Purity: S}


\begin{UHypothesis}
A field $\L$ of prime characteristic $p > 0$ is called \emph{$F$-finite} if $[\L: \L^p] < \infty$.  Throughout this article, all fields of prime characteristic will be assumed to be $F$-finite. 
\end{UHypothesis}

Let $R = \L[x_1, \cdots, x_m]$ denote the polynomial ring over a field of characteristic $p>0$, and let $f$ be a non-zero polynomial in $R$ with $f(\0) = 0$, so that $f \in \m:=(x_1, \cdots, x_m).$  For every $I \subseteq R$, let $\bracket{I}{e}$ denote the ideal generated by the set $\set{ r^{p^e} : r \in I}$.  We call $\bracket{I}{e}$ the \emph{$e^{\text{th}}$ Frobenius power} of $I$.  Note that $ \bracket{\( \bracket{I}{e} \)}{\l} = \bracket{I}{e+\l}$.  It is well known that $R$ is a finitely-generated, free  $R^{p^e}$-module. Using this fact, it is easy to verify that $g^{p^e} \in \bracket{I}{e}$ if and only if $g \in I$ .  
Consider the values \begin{equation} \label{nu: deq} \new{f}{e} : = \max \set{ a : f^a \notin \bracket{\m}{e} }. \end{equation}
 
As $f \in \m$, we have that $f^{p^e} \in \bracket{\m}{e}$, so that $0 \leq \new{f}{e} \leq p^e-1$.  As $f^a \notin \bracket{\m}{e}$ implies that $\( f^{a} \)^{p^d} = f^{p^d a} \notin \bracket{\m}{e+d}$, we have that $p^d \cdot \new{f}{e} \leq \new{f}{e+d}$.  These observations show that the numbers $\frac{\new{f}{e}}{p^e}$ define a non-decreasing sequence of non-negative rational numbers contained in the unit interval.

\begin{Definition} \cite{TW2004, MTW2005}   The limit $\fpt{f}:= \lim \limits_{e \to \infty} \frac{\new{f}{e}}{p^e}$ exists, and is called the \emph{$F$-pure threshold of $f$ at $\m$}.
\end{Definition}

\begin{Remark}
\label{FTleq1Remark}
As $f \neq 0$, there exists $e \geq 1$ such that $f \notin \bracket{\m}{e}$.  Thus, $\new{f}{e} \geq 1$, and it follows that $\lfpt{\m}{f} \in (0,1]$.
\end{Remark}

\begin{Remark}
The $\new{f}{e}$ determine $\fpt{f}$ by definition, but it turns out that the converse is true as well.  Specifically, it is shown in \cite[Proposition 1.9]{MTW2005} that \begin{equation} \label{nurecovery: e} \new{f}{e} =  \up{p^e \cdot \fpt{f}} - 1, \end{equation} where $\up{ \cdot }$ denotes the least integer greater than function.  For a generalization, see \cite{Singularities}.
\end{Remark}

\begin{Lemma}
\label{OneConditionSplittingLemma}
Let $\lambda \in [0,1]$ be a rational number such that $(p^e-1) \cdot \lambda \in \mathbb{N}$ for some $e \geq 1$.  Then $f^{(p^e-1) \cdot \lambda } \notin \bracket{\m}{e}$ if and only if $\lfpt{\m}{f} \geq \lambda$.
\end{Lemma}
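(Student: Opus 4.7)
The plan is to prove each implication in turn, with the non-trivial direction relying on a bootstrapping argument.

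For the easy direction, suppose $\fpt{f} \geq \lambda$. I would invoke the formula $\new{f}{e} = \up{p^e \cdot \fpt{f}} - 1$ recorded in the preceding remark. Since $(p^e-1)\lambda$ is an integer and $\lambda \in (0,1]$, we have $p^e \lambda = (p^e-1)\lambda + \lambda$ and thus $\up{p^e\lambda} = (p^e-1)\lambda + 1$; the formula then gives $\new{f}{e} \geq \up{p^e \lambda} - 1 = (p^e-1)\lambda$, which is precisely $f^{(p^e-1)\lambda} \notin \bracket{\m}{e}$. The case $\lambda = 0$ is trivial since $f^0 = 1 \notin \bracket{\m}{e}$ and $\fpt{f} \geq 0$ always.

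For the converse, the strategy is to bootstrap the non-containment at level $e$ up to non-containment at every multiple $ne$ and then pass to the limit. The central ingredient is the following multiplicative lemma, which will drive the induction: \emph{if $g \notin \bracket{\m}{e_1}$ and $h \notin \bracket{\m}{e_2}$, then $g \cdot h^{p^{e_1}} \notin \bracket{\m}{e_1 + e_2}$.} Granting this, induct on $n$ using $g = f^{(p^{ne}-1)\lambda}$ (not in $\bracket{\m}{ne}$ by the inductive hypothesis), $h = f^{(p^e-1)\lambda}$ (not in $\bracket{\m}{e}$ by assumption), $e_1 = ne$, and $e_2 = e$; the telescoping identity $(p^{ne}-1)\lambda + p^{ne}(p^e-1)\lambda = (p^{(n+1)e}-1)\lambda$ gives $f^{(p^{ne}-1)\lambda} \notin \bracket{\m}{ne}$ for every $n \geq 1$. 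Hence $\new{f}{ne}/p^{ne} \geq (p^{ne}-1)\lambda/p^{ne}$, and letting $n \to \infty$ yields $\fpt{f} \geq \lambda$.

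The main obstacle is the multiplicative lemma itself. To establish it, I would exploit that $R$ is a free $R^{p^{e_1}}$-module with basis $\{\x^{\bold{a}} : \bold{a} \in [0, p^{e_1}-1]^m\}$, so that any element has a unique decomposition $\sum_\bold{a} s_\bold{a}^{p^{e_1}} \x^\bold{a}$. Using that Frobenius commutes with addition in characteristic $p$, a short computation shows that $r \in \bracket{\m}{e_1}$ if and only if every $s_\bold{a} \in \m$, and more generally that $r \in \bracket{\m}{e_1+e_2}$ if and only if every $s_\bold{a} \in \bracket{\m}{e_2}$. Applied to $g h^{p^{e_1}} = \sum_\bold{a} (h s_\bold{a})^{p^{e_1}} \x^\bold{a}$, the claim reduces to exhibiting some index $\bold{a}^{\ast}$ with $h s_{\bold{a}^{\ast}} \notin \bracket{\m}{e_2}$. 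The hypothesis $g \notin \bracket{\m}{e_1}$ supplies some $\bold{a}^{\ast}$ with $s_{\bold{a}^{\ast}}(\0) \neq 0$; since $\m$ is nilpotent modulo $\bracket{\m}{e_2}$, writing $s_{\bold{a}^{\ast}} = s_{\bold{a}^{\ast}}(\0) + (s_{\bold{a}^{\ast}} - s_{\bold{a}^{\ast}}(\0))$ exhibits $s_{\bold{a}^{\ast}}$ as a unit in the Artinian ring $R/\bracket{\m}{e_2}$. Multiplication by this unit cannot push $h \notin \bracket{\m}{e_2}$ into $\bracket{\m}{e_2}$, yielding the required conclusion $h s_{\bold{a}^{\ast}} \notin \bracket{\m}{e_2}$.
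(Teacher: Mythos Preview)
Your proof is correct and follows the same overall architecture as the paper's: the easy direction via the formula $\new{f}{e} = \lceil p^e\cdot\fpt{f}\rceil - 1$, and the hard direction by bootstrapping the single non-containment $f^{(p^e-1)\lambda}\notin\bracket{\m}{e}$ to $f^{(p^{de}-1)\lambda}\notin\bracket{\m}{de}$ for all $d$ via the telescoping identity $(p^{de}-1)\lambda + p^{de}(p^e-1)\lambda = (p^{(d+1)e}-1)\lambda$.

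Where the two arguments diverge is in the inductive step. The paper localizes at $\m$ and argues by contradiction with colon ideals: if $f^{(p^{(d+1)e}-1)\lambda}\in\bracket{\m}{(d+1)e}$, then $f^{(p^e-1)\lambda}\in\bigl(\bracket{\m}{(d+1)e}:f^{p^e(p^{de}-1)\lambda}\bigr)=\bracket{\bigl(\bracket{\m}{de}:f^{(p^{de}-1)\lambda}\bigr)}{e}\subseteq\bracket{\m}{e}$, using flatness of Frobenius for the equality and locality for the inclusion. You instead prove a direct multiplicative lemma via an explicit free-basis computation, which avoids localizing $R$ because the quotient $R/\bracket{\m}{e_2}$ is already local. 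These are two packagings of the same flatness-of-Frobenius fact; yours is more hands-on, the paper's more streamlined.

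One small technical slip: the monomials $\{\x^{\bold{a}}:\bold{a}\in[0,p^{e_1}-1]^m\}$ form a basis of $R$ over $\L[x_1^{p^{e_1}},\ldots,x_m^{p^{e_1}}]$, not over $R^{p^{e_1}}$, unless $\L$ is perfect. Consequently the coefficients in your decomposition need not be $p^{e_1}$-th powers. The fix is harmless: write $g=\sum_{\bold{a}}q_{\bold{a}}(\x^{p^{e_1}})\,\x^{\bold{a}}$ with $q_{\bold{a}}\in R$, note that $h^{p^{e_1}}=\tilde h(\x^{p^{e_1}})$ where $\tilde h$ is $h$ with coefficients raised to the $p^{e_1}$-th power (so $\tilde h$ and $h$ have the same support, hence $\tilde h\notin\bracket{\m}{e_2}$), and run your unit argument with $q_{\bold{a}^\ast}\cdot\tilde h$ in place of $s_{\bold{a}^\ast}\cdot h$.
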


\noindent Lemma \ref{OneConditionSplittingLemma}  is closely related to results from \cite{Singularities} and {\cite{Schwede2008}}.

\begin{proof}[Proof of Lemma \ref{OneConditionSplittingLemma}] First, suppose that $\fpt{f} \geq \lambda$.  It follows from \eqref{nurecovery: e} that \[ \new{f}{e} \geq \up{ p^e \lambda } - 1 = \up{ (p^e-1) \cdot \lambda + \lambda} - 1 = (p^e-1) \cdot \lambda + \up{\lambda} - 1 = (p^e-1) \cdot \lambda,\] so that $f^{(p^e-1) \cdot \lambda}  \notin \bracket{\m}{e}$.   We now address the other implication.

As $\bracket{\m}{e}$ is $\m$-primary, we have that $\bracket{\m}{e} \cdot R_{\m} \cap R = \bracket{\m}{e}$.  Thus, we may localize at $\m$, and assume that $(R,\m)$ is a local ring.  Next, note that \begin{equation} \label{induction0: e} \( p^{e(d+1)} - 1 \) \cdot \lambda =  p^e \cdot \(p^{ed}-1\) \cdot \lambda + \( p^e- 1 \) \cdot \lambda, \end{equation} which shows that $(p^{ed} - 1) \cdot \lambda \in \mathbb{N}$ for all $d \geq 1$.  We now induce on $d$ to show that \begin{equation} \label{induction1: e} f^{(p^{ed} - 1) \cdot \lambda} \notin \bracket{\m}{ed} \text{ for every $d \geq 1$}, \end{equation}  the base case being our hypothesis.  By means of contradiction, assume that \eqref{induction1: e} holds, but that $f^{(p^{e(d+1)} - 1) \cdot \lambda} \in \bracket{\m}{e(d+1)}$.  Applying \eqref{induction0: e} shows that 
\begin{equation} \label{induction2: e} f^{(p^e-1) \cdot \lambda} \in \( \bracket{\m}{e(d+1)} : f^{p^e \( p^{ed} - 1\) \cdot \lambda} \) = \bracket{ \( \bracket{\m}{ed} : f^{(p^{ed}-1) \cdot \lambda} \) }{e} \subseteq \bracket{\m}{e}, \end{equation}

\noindent which is a direct contradiction of our initial hypothesis.  Thus,  \eqref{induction1: e} holds by induction, and implies that $\frac{\new{f}{ed}}{p^{ed}} \geq \frac{\( p^{ed}-1\) \cdot \lambda}{p^{ed}}$.  Finally, letting $d \to \infty$ shows that $\fpt{f} \geq \lambda$.
\end{proof}

We can generalize the above setup as follows:  Given any ideal $\a \subseteq \m$, let 
\begin{equation} \label{nuideal: deq} \new{\a}{e} := \max \set{ r : \a^r  \not \subseteq \bracket{\m}{e} } . \end{equation}

As before, $\frac{\new{\a}{e}}{p^e}$ defines a non-decreasing sequence of non-negative rational numbers.  If $\a$ is generated by $N$ elements, one can also check that $\new{\a}{e} \leq N \( p^e  - 1 \) + 1$, so that $\lim_{e \to \infty} \frac{\new{\a}{e}}{p^e}$ exists, and is bounded above by $N$.  We will call $\lim_{e \to \infty} \frac{\new{\a}{e}}{p^e}$ the $F$-pure threshold of $\a$ at $\m$, and denote it by $\fpt{\a}$.  As before, it is easy to see that $\fpt{\a} > 0$.

\begin{Notation}
We often write $\lfpt{\m}{f}$ and $\lfpt{\m}{\a}$ rather than $\lfpt{\m}{R,f}$ and $\lfpt{\m}{R,\a}$. Furthermore, if $\mathscr{N}$ is a subset of $R$, we will use $\fpt{\mathscr{N}}$ to denote the $F$-pure threshold of the ideal generated by $\mathscr{N}$.
\end{Notation}

\begin{Remark}
\label{RationalityRemark}  Note that if $f \in \a$, then $\fpt{f} \leq \fpt{a}$.  Thus, if $f$ is a $K^{\ast}$-linear combination of a set of monomials $\sM$, it follows from this observation and Remark \ref{FTleq1Remark} that $\fpt{f} \leq \min \set{1, \fpt{\sM}}$.  Also, although it is not obvious,  $\lfpt{\m}{f}$ and $\lfpt{\m}{\a}$ are rational numbers \cite[Theorem 3.1]{BMS2008}.
\end{Remark}



\section{Splitting polytopes and Newton polyhedra}

\subsection{On splitting polytopes}

\begin{Notation} For $\s = (s_1, \cdots, s_n) \in \mathbb{R}^n$, $|\s|$ will denote the coordinate sum $s_1 + \cdots + s_n$.  We stress that $|\cdot |$ is \emph{not} the usual Euclidean norm on $\mathbb{R}^n$.  Furthermore, when dealing with elements of $\mathbb{R}^n$, we use $\vl$ and $\vleq$ to denote component-wise (strict) inequality.  Finally, $\vone_m$ will denote the element $(1, \cdots, 1) \in \mathbb{R}^m$.
\end{Notation}

Let $\sM := \set{ \x^{\aa_1}, \cdots, \x^{\aa_n} }$ be a collection of distinct monomials in $x_1, \cdots, x_m$, where each exponent vector $\aa_i$ is a non-zero element of $\mathbb{N}^m$.  


\begin{Definition}
\label{SplittingMatrixDefinition}
We call the $m \times n$ matrix $\EM: = \left( \aa_1 \cdots \aa_n \right)$ the \emph{exponent matrix of $\sM$}.
\end{Definition}



\begin{Definition} 
\label{SplittingPolytope: D}
We call $\P: = \set{ \s \in \mathbb{R}_{\geq 0}^n  : \EM \s \vleq \vone_m }$ the \emph{splitting polytope of $\sM$}.  As $\EM$ has non-negative integer entries, $\P$ is contained in $[0,1]^n$.  \end{Definition}

\begin{Definition}
\label{MaxPoints: D}
For $\lambda \in \mathbb{R}_{\geq 0}$, let $\P_{\lambda}$ denote the hyperplane section  $\P \cap \set{ \s : | \s | = \lambda }$.  If $\alpha = \max \set{ | \s | : \s \in \P}$,  we set $\Pmax:=\P_{\alpha}$, and we call the elements of $\Pmax$ the \emph{maximal points} of $\P$.
\end{Definition}

\begin{Example}
\label{DiagonalPolytope: E}
If $\sM = \set{ x_1^{d_1}, \cdots, x_m^{d_m}}$, then $\P = \set{ \s \in \mathbb{R}^m : \0  \vleq \s \vleq \( \frac{1}{d_1}, \cdots, \frac{1}{d_m} \) }$, and $\Pmax = \set{ \left( \frac{1}{d_1}, \cdots, \frac{1}{d_m} \right)}$.
\end{Example}

\begin{Example}
\label{PyramidPolytope: E}
If  $\sM = \{ x^a, y^b, (xy)^c \}$, then $\P = \set{ \s \in \mathbb{R}_{\geq 0}^3 :  \begin{array}{c}  a s_1 + c s_3 \leq 1 \\ b s_2 + c s_3 \leq 1 \end{array} }$ is the convex hull of $\vee_1 = \left(\frac{1}{a},0,0\right), \vee_2 = \left(0,\frac{1}{b},0\right), \vee_3=\left( \frac{1}{a},\frac{1}{b},0\right)$,  $\vee_4 = \left(0,0,\frac{1}{c}\right)$, and $\0$. 
\end{Example}

\begin{figure}[h]
\begin{tabular}{ccc}
\psset{xunit=1.5in,yunit=1.5in}
\begin{pspicture}(0,-.3)(0,.65)
\psset{Alpha=60, Beta=20}
\psset{linewidth=.5pt}
\pstThreeDLine[linewidth=.5pt,arrows=->](0,0,0)(0,0,.6)
\pstThreeDLine[linewidth=.5pt,arrows=->](0,0,0)(.9,0,0)
\pstThreeDLine[linewidth=.5pt,arrows=->](0,0,0)(0,.7,0)

\pstThreeDPut(1,0,0){\small{$s_1$}}
\pstThreeDPut(0,.8,0){\small{$s_2$}}
\pstThreeDPut(0,0,.7){\small{$s_3$}}

\psset{dotscale=1}
\pstThreeDDot(0,0,0)
\pstThreeDDot(.6,0,0)
\pstThreeDDot(0,.5,0)
\pstThreeDDot(0,0,.4)
\pstThreeDDot(.6,.5,0)
\pstThreeDDot(.6,0,.4)
\pstThreeDDot(0,.5,.4)
\pstThreeDDot[dotscale=2.2](.6,.5,.4)
\psset{linewidth=.65pt}
\pstThreeDBox(0,0,0)(.6,0,0)(0,.5,0)(0,0,.4)
\pstThreeDLine[linestyle=dashed]{->}(.61,.51,.8)(.61,.51,.45)
\pstThreeDPut(.61,.51,.85){\tiny{$\Pmax$}}
\end{pspicture}
& \hspace{.3 \linewidth}  & 
\psset{xunit=1.5in,yunit=1.5in}
\begin{pspicture}(0,-.3)(0,.65)
\psset{Alpha=60, Beta=20}
\psset{linewidth=.5pt}
\pstThreeDLine[linewidth=.5pt,arrows=->](0,0,0)(0,0,.6)
\pstThreeDLine[linewidth=.5pt,arrows=->](0,0,0)(.9,0,0)
\pstThreeDLine[linewidth=.5pt,arrows=->](0,0,0)(0,.7,0)

\pstThreeDPut(1,0,0){\small{$s_1$}}
\pstThreeDPut(0,.8,0){\small{$s_2$}}
\pstThreeDPut(0,0,.7){\small{$s_3$}}

\psset{dotscale=1}
\pstThreeDDot(0,0,0)
\pstThreeDDot(.6,0,0)
\pstThreeDDot(0,.5,0)
\pstThreeDDot(0,0,.45)
\pstThreeDDot(.6,.5,0)
\psset{linewidth=.65pt}
\pstThreeDLine(.6,0,0)(0,0,.45)
\pstThreeDLine(0,.5,0)(0,0,.45)
\pstThreeDLine(.6,.5,0)(0,0,.45)
\pstThreeDLine(.6,0,0)(.6,.5,0)
\pstThreeDLine(.6,0,0)(.6,.5,0)
\pstThreeDLine(.6,0,0)(.6,.5,0)
\pstThreeDLine(0,.5,0)(.6,.5,0)
\pstThreeDLine(0,.5,0)(0,0,0)
\pstThreeDLine(.6,0,0)(0,0,0)
\pstThreeDLine(0,0,0)(0,0,.45)
\pstThreeDPut(.7,0,.1){\small{$\vee_1$}}
\pstThreeDPut(-.15,.5,0){\small{$\vee_2$}}
\pstThreeDPut(.85,.65,0){\small{$\vee_3$}}
\pstThreeDPut(0,.1,.5){\small{$\vee_4$}}
\end{pspicture}
\end{tabular}
\caption{The splitting polytopes from Examples \ref{DiagonalPolytope: E} and \ref{PyramidPolytope: E}.}
\end{figure}
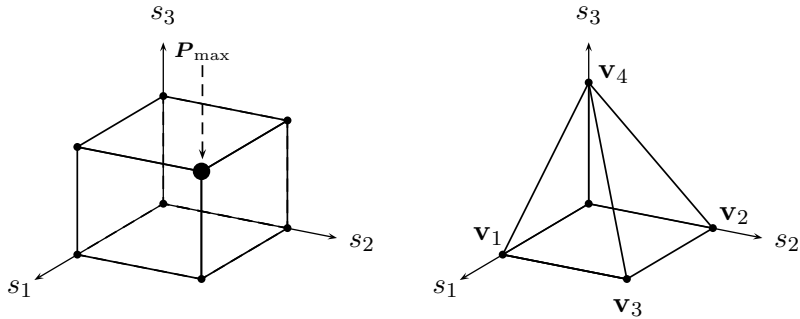

The following illustrates the connection between splitting polytopes and $F$-pure thresholds.

\begin{Proposition}
\label{Monomialfpt: P} Over any field of characteristic $p>0$,  $\fpt{\sM} = \max \set{| \s | : \s \in \P}$.
\end{Proposition}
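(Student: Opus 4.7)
The plan is to reformulate $\new{\sM}{e}$ as an integer programming problem inside the dilate $p^e \cdot \P$ and then pass to the limit. Letting $\a$ denote the ideal generated by $\sM$, the power $\a^r$ is generated by the monomials $\x^{\EM\k}$ with $\k \in \mathbb{N}^n$, $|\k|=r$; since $\bracket{\m}{e}$ is a monomial ideal, $\x^{\EM\k} \notin \bracket{\m}{e}$ exactly when $\EM\k \vleq (p^e - 1)\vone_m$. Consequently,
\[ \new{\sM}{e} = \max\set{|\k| \ : \ \k \in \mathbb{N}^n, \ \EM\k \vleq (p^e - 1) \vone_m}. \]

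For the upper bound, any feasible $\k$ produces a point $\s := \k/p^e \in \P$ (since $\EM\s \vleq (1 - 1/p^e)\vone_m \vleq \vone_m$), giving $\new{\sM}{e}/p^e = |\s| \leq \max\set{|\s'| : \s' \in \P}$; letting $e \to \infty$ yields $\fpt{\sM} \leq \max\set{|\s'| : \s' \in \P}$. For the reverse inequality, I would fix a maximal point $\s^* = (s_1^*, \ldots, s_n^*) \in \Pmax$ and build a feasible integer vector from its digit truncations: set $k_i^{(e)} := p^e \cdot \tr{s_i^*}{e}$, which belongs to $\mathbb{N}$ by Lemma \ref{TruncationLemma} and satisfies $k_i^{(e)}/p^e \to s_i^*$. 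The main step is to verify row by row that $\EM\k^{(e)} \vleq (p^e - 1)\vone_m$: if $\sum_i a_{ji} s_i^* > 0$, then Lemma \ref{TruncationLemma} gives $\tr{s_i^*}{e} < s_i^*$ whenever $s_i^* > 0$, so $\sum_i a_{ji} \tr{s_i^*}{e} < \sum_i a_{ji} s_i^* \leq 1$; as $p^e$ times this quantity is a non-negative integer, it is at most $p^e - 1$. If the row sum vanishes, the bound is trivial. Hence $\k^{(e)}$ is feasible, and
\[ \frac{\new{\sM}{e}}{p^e} \ \geq \ \frac{|\k^{(e)}|}{p^e} \ = \ \sum_i \tr{s_i^*}{e} \ \longrightarrow \ |\s^*| \text{ as } e \to \infty, \]
forcing $\fpt{\sM} \geq |\s^*| = \max\set{|\s| : \s \in \P}$.

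The subtlety I would be most careful about is the strict-versus-weak inequality in the lower bound: on a facet of $\P$ cut out by $\sum_i a_{ji} s_i = 1$, a naive scaling argument only gives $\EM \k^{(e)} \vleq p^e \vone_m$, whereas the definition of $\new{\sM}{e}$ demands $\EM \k^{(e)} \vleq (p^e - 1) \vone_m$. This is exactly where the strictness $\tr{\alpha}{e} < \alpha$ provided by Lemma \ref{TruncationLemma} is indispensable, together with the integrality of $(\EM \k^{(e)})_j$, which together upgrade a strict real inequality below $1$ to an integer inequality below $p^e$.
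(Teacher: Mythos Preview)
Your proposal is correct and follows the same approach as the paper: the upper bound comes from rescaling any witness $\k$ for $\nu_{\sM}(p^e)$ into $\P$, and the lower bound from base-$p$ truncation of a maximal point $\eeta \in \Pmax$, invoking Lemma~\ref{TruncationLemma} to obtain the strict inequality needed to land in $\bracket{\m}{e}$'s complement. Your row-by-row case analysis (splitting on whether $(\EM\eeta)_j$ vanishes) is in fact slightly more careful than the paper's terse assertion that $\EM\tr{\eeta}{e} \prec \EM\eeta$, which tacitly assumes every row of $\EM\eeta$ is positive.
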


\begin{proof} Note that the elements $\x^{k_1 \aa_1} \cdots \x^{k_n \aa_n}= \x^{\EM \k}$ with $k_1 + \cdots + k_n = N$ generate $(\sM)^N$.  Let $\new{}{e}: = \new{\( \sM \)}{e}$ be as in \eqref{nuideal: deq}.  As $\( \sM \)^{\new{}{e}} \nsubseteq \bracket{\m}{e}$, there exists $\k \in  \mathbb{N}^n$ with $| \k | = \new{}{e}$ such that $\x^{\EM \k} \notin \bracket{\m}{e}$.  By definition, $\frac{1}{p^e} \cdot \k \in \P$, and consequently $\frac{\new{}{e}}{p^e} = \frac{| \k |}{p^e} = \left | \frac{1}{p^e} \cdot \k \right |$, which by definition is bounded above by $\max \set{ |\s|: \s \in \P }$.  Taking $e \to \infty$ then shows that $\lfpt{\m}{\sM} \leq \max \set{|\s|: \s \in \P}$.  

Next, choose $\eeta \in \Pmax$.  As $\tr{\eeta}{e} \vl \eeta$, we have that $\EM \tr{\eeta}{e} \vl \EM \eeta \vleq \1_m$, and so $\x^{p^e \EM \tr{\eeta}{e}}$ is contained in $\( \sM \)^{p^e | \tr{\eeta}{e} |}$ but not  $\bracket{\m}{e}$. It follows that $\frac{\new{\a}{e}}{p^e} \geq | \tr{\eeta}{e} |$, and again letting $e \to \infty$ shows that $\fpt{\sM} \geq | \eeta | = \max \{ |\s| : \s \in \P \}$.  
\end{proof}

\subsection{On Newton polyhedra}
We next investigate the role of the Newton polyhedron associated to $\sM$, and begin by reviewing some basic notions from convex geometry.  

\begin{Discussion}
\label{convexgeo: d}
 Recall that $\sP \subseteq \mathbb{R}^m$ is called a \emph{polyhedral set} if there exist finitely many linear forms $L_1, \cdots L_d$ in $\mathbb{R}[z_1, \cdots, z_m]$ and elements $\beta_1, \cdots, \beta_d \in \mathbb{R}$ such that \begin{equation} \sP = \set{ \bv \in \mathbb{R}^m : L_i (\bv) \geq \beta_i \text{ for all } 1 \leq i \leq d}.\end{equation}
\noindent $\sP$ is called \emph{rational} if $L_i \in \mathbb{Q}[z_1, \cdots, z_m]$ and $\beta_i \in \mathbb{Q}$ for all $1 \leq i \leq d$.

Given any finite set $\sC \subseteq \mathbb{R}^m$, we use $\cone{\sC}$ to denote $\set{ \sum_{\bv \in \sC} \lambda_{\bv} \cdot \bv : \lambda_{\bv} \geq 0}$, the cone generated by $\sC$, and $\convex{\sC}$ to denote $\set{ \sum_{\bv \in \sC} \lambda_{\bv} \cdot \bv : \lambda_{\bv} \geq 0 \text{ and } \sum_{\bv \in \sC} \lambda_{\bv} = 1}$, the convex hull of $\sC$.  Though it is not obvious from these definitions, both $\cone{\sC}$ and $\convex{\sC}$ are polyhedral sets; see \cite[Theorem 4.1.1 and Theorem 3.2.5]{Webster}.  Additionally, these polyhedral sets are rational if and only if $\sC \subseteq \mathbb{Q}^m$.

If $L \in \mathbb{R}[z_1, \cdots, z_m]$ is a non-zero linear form and $\beta$ is a real number, we use $H^{\beta}_L$ to denote  $\set{ \bv \in \mathbb{R}^m : L(\bv) \geq \beta}$, the (upper) \emph{halfspace} determined by $L$ and $\beta$.  If $\sP$ is a polyhedral set, then $H^{\beta}_L$ is called a \emph{supporting} halfspace of $\sP$ if $\sP \subseteq H^{\beta}_L$ and $L^{-1} \( \beta \) \cap \sP \neq \emptyset$.  In this case,  $L^{-1} \( \beta \) \cap \sP$ is called an (exposed) \emph{face} of $\sP$.  Given two subsets $S$ and $S'$ of $\mathbb{R}^m$, we will use $S + S'$ to denote $\set{ \bv + \bw : \bv \in S, \bw \in S'}$, the Minkowski sum of $S$ and $S'$.  A set in $\mathbb{R}^m$ is polyhedral if and only if it is of the form $\convex{\sC} + \cone{\Gamma}$, where $\sC$ and $\Gamma$ are two finite (and possibly empty) subsets of $\mathbb{R}^m$ \cite[Theorem 4.1.2]{Webster}.
\end{Discussion}

In what follows, we abuse notation and use  $\convex{\sM}$ to denote $\convex{\set{ \aa_1, \cdots, \aa_n}}$, which has the following description in terms of the exponent matrix $\EM$ of $\sM$:  \begin{equation} \label{chulldescription: e} \convex{\sM} = \set{ \EM \s :  \s \in \mathbb{R}^n_{\geq 0} \text{ and } | \s | = 1 }. \end{equation}

\begin{Definition}
\label{Newton: D}
We call $\Newton: = \convex{\sM} + \mathbb{R}^m_{\geq 0}$ the \emph{Newton polyhedron} of $\sM$.
\end{Definition}

\noindent Note that $\Newton$ may also be described as the convex hull of $\set{ \bv : \x^{\bv} \in \( \sM \)}$, where $\( \sM \)$ denotes the monomial ideal generated by $\sM$.  Let $\be_1, \cdots, \be_m$ denote the standard basis for $\mathbb{R}^m$.  As $\mathbb{R}^m_{\geq 0} = \cone{ \set{\be_1, \cdots, \be_m}}$, it follows from Discussion \ref{convexgeo: d} that both $\P$ and $\Newton$ are rational polyhedral sets.  The following lemma gives some important conditions on the defining inequalities of $\Newton$.

\begin{Lemma}
\label{RationalCoefficients: L}
Let $L = \beta_1 z_1 + \cdots + \beta_m z_m \in \mathbb{Q}[z_1, \cdots, z_m]$ be a non-zero linear form, and suppose that $H^{\beta}_L$ is a supporting halfspace of $\Newton$.  Then $\beta_1, \cdots, \beta_m \geq 0$.  Furthermore,  $L^{-1} \( \beta \) \cap \Newton$ is bounded if and only if $\beta_1, \cdots, \beta_m > 0$.
\end{Lemma}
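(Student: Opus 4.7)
The plan is to exploit the structure $\Newton = \convex{\sM} + \mathbb{R}^m_{\geq 0}$: this polyhedron is stable under translation by any element of $\mathbb{R}^m_{\geq 0}$, so I can probe the supporting inequality by moving an arbitrary point of $\Newton$ along the coordinate directions $\be_1, \ldots, \be_m$ and observe what $L \geq \beta$ forces on the $\beta_i$.

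For the first claim, I fix any $\bv_0 \in \Newton$. Since $\bv_0 + t\, \be_i \in \Newton$ for all $t \geq 0$, the supporting inequality yields $L(\bv_0) + t\, \beta_i \geq \beta$ for every $t \geq 0$; letting $t \to \infty$ would force $-\infty \geq \beta$ if some $\beta_i$ were negative, so each $\beta_i \geq 0$.

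For the equivalence, set $F := L^{-1}(\beta) \cap \Newton$, which is nonempty since $H^\beta_L$ is supporting. If $\beta_i = 0$ for some $i$, pick any $\bv \in F$: the ray $\bv + [0,\infty) \cdot \be_i$ lies in $\Newton$ and also in $L^{-1}(\beta)$ (because $\beta_i = 0$), so $F$ is unbounded. Conversely, if every $\beta_i > 0$, then any $\bv = (v_1, \ldots, v_m) \in F$ satisfies $v_i \geq 0$ (since $\aa_j \in \mathbb{N}^m$ forces $\Newton \subseteq \mathbb{R}^m_{\geq 0}$) together with $\beta_i v_i \leq \sum_j \beta_j v_j = \beta$; hence $0 \leq v_i \leq \beta/\beta_i$, and $F$ sits in a box.

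I do not anticipate any real obstacle. The only subtlety worth flagging is that two distinct aspects of the structure of $\Newton$ come into play: its unboundedness in each positive coordinate direction (used to force $\beta_i \geq 0$ and to produce a ray in any face along which some $\beta_i$ vanishes), and its containment in $\mathbb{R}^m_{\geq 0}$ inherited from $\aa_j \in \mathbb{N}^m$ (used to supply the lower bound $v_i \geq 0$ in the boundedness argument).
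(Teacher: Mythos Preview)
Your proof is correct and follows the same core idea as the paper: exploit that $\Newton$ is stable under translation by each $\be_i$ to extract the sign of $\beta_i$ from the supporting inequality, and produce a coordinate ray in the face when some $\beta_i$ vanishes. The one place you differ is in the direction ``all $\beta_i>0 \Rightarrow F$ bounded'': the paper argues the contrapositive, asserting that an unbounded face of $\Newton$ must contain a ray $\{\bv+\lambda\be_i\}$ and then reading off $\beta_i=0$; you instead give a direct box bound $0\le v_i\le \beta/\beta_i$ using $\Newton\subseteq\mathbb{R}^m_{\ge 0}$. Your argument here is more self-contained, since the paper's claim that any unbounded face contains a \emph{coordinate} ray implicitly relies on knowing the recession cone of $\Newton$, whereas yours needs only the two elementary facts you flagged.
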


\begin{proof}
Fix $\bv \in L^{-1} \( \beta \) \cap \Newton$.  If $\lambda > 0$, then $\bv + \lambda \cdot \be_i \in \Newton$, and so 
\begin{equation} \label{ratcoeff: e} \beta \leq L \( \bv + \lambda \cdot \be_i \) = L\( \bv \) + \lambda \cdot L ( \be_i) = \beta + \lambda \cdot \beta_i.\end{equation}
\noindent As $\lambda > 0$, \eqref{ratcoeff: e} implies $\beta_i \geq 0$.  We now prove the contrapositive of the second assertion.  

The face $L^{-1}(\beta) \cap \Newton$ is unbounded if and only if some ray $\set{ \bv + \lambda \cdot e_i : \lambda \in \mathbb{R}_{>0}}$ is contained in $L^{-1}(\beta) \cap \Newton$.  However, this happens if and only if we have equality in \eqref{ratcoeff: e}, which then shows that $0 = \lambda \cdot \beta_i$.  As $\lambda > 0$, we conclude that $\beta_i=0$.  
\end{proof}

\begin{Remark}
In what follows, ``$\longleftrightarrow$ '' will be used to denote bijective correspondence between sets.  If $\lambda > 0$, it  follows from the definitions that 
\begin{align} \label{bijections: e} \P_{\lambda}  & \longleftrightarrow \frac{1}{\lambda} \cdot \P_{\lambda} = \set{ \s \in \mathbb{R}^n_{\geq 0} : | \s | = 1 \text{ and } \EM \s \vleq \frac{1}{\lambda} \cdot \vone_m } 
 \\  & \longleftrightarrow \set{ \( \s, \bold{w} \) \in \mathbb{R}^n_{\geq 0} \times \mathbb{R}^m_{\geq 0} : | \s | = 1 \text{ and } \EM \s + \bold{w} = \frac{1}{\lambda} \cdot \vone_m}. \notag \end{align}

It follows from \eqref{bijections: e} and \eqref{chulldescription: e} that $\P_{\lambda} \neq \emptyset \iff \frac{1}{\lambda} \cdot \vone_m \in \Newton$.  In particular, \begin{equation} \label{monomialidealequalities: e}  \fpt{\sM} =  \max \set{ | \s | : \s \in \P } = \max \set{ \lambda > 0 : \frac{1}{\lambda} \cdot \vone_m \in \Newton },  \end{equation}
where the first equality in \eqref{monomialidealequalities: e} holds by Proposition \ref{Monomialfpt: P}.
\end{Remark}

\subsection{Newton polyhedra in diagonal position, and $F$-pure thresholds}

\begin{Definition}
We will use $\alpha$ to denote the common values in \eqref{monomialidealequalities: e}.
\end{Definition}

\begin{Notation}
\label{alpha: N}
By definition, $\frac{1}{\alpha} \cdot \vone_m$ is contained in $\partial \Newton$, the boundary of $\Newton$.  By \cite[Theorem 3.2.2]{Webster}, $\partial \Newton$ is the union of the faces of $\Newton$, and we use $\Nsigma$ to denote the unique minimal face, with respect to inclusion, containing $\frac{1}{\alpha} \cdot \vone_m$.  We reorder $\sM$ and choose $1 \leq r \leq n$ so that $\aa_i \in \Nsigma$ if and only if $1 \leq i \leq r$.  We use $\sM_{\Nsigma}$ to denote the set $\set{ \x^{\aa_1}, \cdots, \x^{\aa_r}} \subseteq \sM$, and $\be_1, \cdots, \be_n$ to denote the standard basis of $\mathbb{R}^n$.
\end{Notation}

Suppose that $L = \beta_1 z_1 + \cdots + \beta_m z_m \in \mathbb{Q}[z_1, \cdots, z_m]$ is a non-zero linear form and $\beta$ is a rational number such that $H^{\beta}_L$ is a supporting halfspace of $\Newton$ and $\frac{1}{\alpha} \cdot \vone_m$ is on the face determined by $H^{\beta}_L$.  By Lemma \ref{RationalCoefficients: L}, $\beta_1, \cdots, \beta_m \geq 0$ and are not all zero, and so $\beta = L \( \frac{1}{\alpha} \cdot \vone_m \)  = \frac{1}{\alpha} \cdot \( \beta_1 + \cdots + \beta_m \) > 0$.  Normalizing, we obtain the following.

\begin{Definition}
There exists a unique linear form $L_{\Nsigma} \in \mathbb{Q}[z_1, \cdots, z_m]$ with non-negative coefficients such that $H^1_{L}$ is a supporting halfspace of $\Nsigma$ and $L_{\Nsigma}^{-1}(1) \cap \Newton = \Nsigma$.  We call $L_{\Nsigma}$ the \emph{linear form determined by $\Nsigma$.}
\end{Definition}

\begin{Definition}
\label{DiagonalPosition: D}
We say that $\Newton$ is in \emph{diagonal position} if $\Nsigma$ bounded.  By Lemma \ref{RationalCoefficients: L}, $\Newton$ is in diagonal position if and only if every $z_i$ appears in $L_{\Nsigma}$ with positive coefficient.
\end{Definition}

This terminology is motivated by the fact that if $\sM = \set{ x_1^{d_1}, \cdots, x_m^{d_m} }$, then $\sM = \sM_{\Nsigma}$ defines a Newton polyhedron in diagonal position.

\begin{Example}
The Newton polyhedra associated to the collection of monomials appearing in Example \ref{DiagonalPolytope: E} and \ref{PyramidPolytope: E} are in diagonal position.
\end{Example}

\begin{Lemma} 
\label{MaximalPointDP: L}
If $\Newton$ is in diagonal position, then $\s = (s_1, \cdots, s_n) \in \mathbb{R}^n_{\geq 0}$.  Then $\s \in \Pmax$ if and only if $s_{r+1} = \cdots = s_n = 0$ and $\EM \s = \vone_m$.
\end{Lemma}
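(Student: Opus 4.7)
The plan is to use the linear form $L_\Nsigma$ as the central tool, exploiting three of its features: (i) since $\frac{1}{\alpha}\vone_m \in \Nsigma = L_\Nsigma^{-1}(1) \cap \Newton$, linearity gives $L_\Nsigma(\vone_m) = \alpha$; (ii) every exponent vector $\aa_i$ lies in $\Newton$, hence $L_\Nsigma(\aa_i) \geq 1$, with equality precisely when $\aa_i \in \Nsigma$, i.e., when $i \leq r$; and (iii) in the diagonal position case, Lemma \ref{RationalCoefficients: L} ensures every coefficient of $L_\Nsigma$ is strictly positive, so $L_\Nsigma$ vanishes on a vector in $\mathbb{R}^m_{\geq 0}$ only at the origin.

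For the forward implication, I would take $\s \in \Pmax$ and set $\bold{w} := \vone_m - \EM\s$, which lies in $\mathbb{R}^m_{\geq 0}$ because $\EM\s \vleq \vone_m$. Applying $L_\Nsigma$ to the identity $\vone_m = \EM\s + \bold{w}$ and invoking features (i) and (ii) produces
\[ \alpha = L_\Nsigma(\vone_m) = \sum_{i=1}^n s_i\, L_\Nsigma(\aa_i) + L_\Nsigma(\bold{w}) \geq \sum_{i=1}^n s_i + 0 = |\s| = \alpha. \]
Forcing equality throughout yields two conclusions simultaneously: $L_\Nsigma(\aa_i) = 1$ whenever $s_i > 0$, which means $s_{r+1} = \cdots = s_n = 0$; and $L_\Nsigma(\bold{w}) = 0$, which by feature (iii) forces $\bold{w} = \0$, i.e., $\EM\s = \vone_m$.

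For the converse, if $\s \in \mathbb{R}^n_{\geq 0}$ satisfies $s_{r+1} = \cdots = s_n = 0$ and $\EM\s = \vone_m$, then $\s \in \P$ is immediate, and
\[ |\s| = \sum_{i=1}^r s_i = \sum_{i=1}^r s_i\, L_\Nsigma(\aa_i) = L_\Nsigma(\EM\s) = L_\Nsigma(\vone_m) = \alpha, \]
so $\s \in \Pmax$. The only content-bearing step is the appeal to diagonal position in the forward direction; without it, the chain of inequalities would still force $s_{r+1} = \cdots = s_n = 0$, but $\EM\s$ could remain anywhere on the unbounded face $\Nsigma$ rather than being pinned to the specific point $\vone_m$.
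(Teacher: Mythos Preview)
Your proof is correct and follows essentially the same approach as the paper's: both arguments apply the linear form $L_{\Nsigma}$ to the inequality $\EM\s \vleq \vone_m$ and exploit the three features you list to squeeze out equality. The only cosmetic difference is that the paper first normalizes by setting $\bkappa = \frac{1}{\alpha}\s$ and then argues that any strict coordinate inequality in $\EM\bkappa \vleq \frac{1}{\alpha}\vone_m$ would propagate through $L_{\Nsigma}$, whereas you package the same step more cleanly via the slack vector $\bold{w} = \vone_m - \EM\s$ and the observation that $L_{\Nsigma}(\bold{w}) = 0$ forces $\bold{w} = \0$.
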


\begin{proof}   
If $\s \in \Pmax$, set $\bkappa := \frac{1}{\alpha} \cdot \s$.  Note that $\bkappa \vgeq \0, | \bkappa | = 1$, and $\EM \bkappa \vleq \frac{1}{\alpha} \cdot \vone_m$.  By hypothesis, $L_{\Nsigma}$ has positive coefficients, and so preserves inequalities, and as $L_{\Nsigma} \equiv 1$ on $\Nsigma$, 
\begin{align}
\label{onemonomial: a}
1 = L_{\Nsigma} \( \frac{1}{\alpha} \cdot \vone_m \) \geq  L_{\Nsigma} \( \EM \bkappa \) =  L_{\Nsigma} \( \sum \limits_{i=1}^n \kappa_i \cdot \aa_i \) & =  \sum_{i=1}^r \kappa_i \cdot L_{\Nsigma}(\aa_i) + \sum_{i=r+1}^n \kappa_i \cdot L_{\Nsigma}(\aa_i) \\ 
 & = \sum_{i=1}^r \kappa_i + \sum_{i = r+1}^n \kappa_i \cdot L_{\Nsigma}(\aa_i). \notag
\end{align}

Substituting the equality $| \bkappa | =1$ into the initial term in \eqref{onemonomial: a}, we see that \begin{equation} \label{onemonomial: e} \sum_{i = r+1}^n \kappa_i \geq \sum_{i=r+1}^n \kappa_i \cdot L_{\Nsigma}(\aa_i).\end{equation}

\noindent By definition, $L_{\Nsigma}(\aa_i) > 1$ for $r+1 \leq i \leq n$, and so \eqref{onemonomial: e} is possible if and only if $\kappa_i$ (and hence $s_i$) is zero for $r+1 \leq i \leq n$. Furthermore, as the coefficients of $L_{\Nsigma}$ are positive, if any of the inequalities in $\EM \bkappa \vleq \frac{1}{\alpha} \cdot \vone_m$ were strict, then the inequality in \eqref{onemonomial: a} would be strict as well, which would imply that  $1 > \sum_{i=1}^r \kappa_i = | \bkappa | = 1$,  a contradiction.  We conclude that $\EM \bkappa = \frac{1}{\alpha} \cdot \vone_m$, and thus $\EM \s = \vone_m$.

Next, suppose that $\s = \sum_{i=1}^r s_i \cdot \be_i \vgeq \0$ and $\EM \s = 1$.  Thus, $\sum_{i=1}^r s_i  \cdot \aa_i = \EM \s = \vone_m$, and so $| \s | = \sum_{i=1}^r s_i = \sum_{i=1}^r s_i \cdot L_{\Nsigma}(\aa_i) = L_{\Nsigma} \(  \sum_{i=1}^r s_i \cdot \aa_i \) = L_{\Nsigma} \( \vone_m \) = \alpha \cdot L_{\Nsigma}\(\frac{1}{\alpha} \cdot \vone_m\) = \alpha$.
\end{proof}

\begin{Corollary}  
\label{Leftover: C}
There exists a maximal point $\eeta \in \Pmax$ with $(p^e-1) \cdot \eeta \in \mathbb{N}^n$ if and only if $(p^e-1) \cdot \alpha \in \mathbb{N}$ and $\( \sM \)^{(p^e-1) \cdot \alpha} \notin \bracket{\m}{e}$.  Furthermore, if either condition holds and $\Newton$ is in diagonal position,  then $\( \sM \)^{(p^e -1) \cdot \alpha}\equiv \( \sM_{\Nsigma} \)^{(p^e-1) \cdot \alpha} \equiv \( x_1^{p^e-1} \cdots x_m^{p^e-1} \)  \bmod \bracket{\m}{e}$. 
\end{Corollary}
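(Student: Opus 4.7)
The plan is to prove the equivalence first and then deduce the ``furthermore'' statement using Lemma \ref{MaximalPointDP: L}.

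For the forward direction of the equivalence, suppose $\eeta \in \Pmax$ satisfies $\k := (p^e-1) \cdot \eeta \in \mathbb{N}^n$. Then $(p^e-1) \cdot \alpha = (p^e-1) \cdot |\eeta| = |\k| \in \mathbb{N}$. Since $\EM \eeta \vleq \vone_m$, we have $\EM \k \vleq (p^e-1) \cdot \vone_m$, so every exponent of $\x^{\EM \k}$ is at most $p^e - 1$, which means $\x^{\EM \k} \notin \bracket{\m}{e}$. As $\x^{\EM \k}$ is a generator of $(\sM)^{|\k|} = (\sM)^{(p^e-1) \cdot \alpha}$, this shows $(\sM)^{(p^e-1) \cdot \alpha} \not\subseteq \bracket{\m}{e}$.

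For the reverse direction, since $(\sM)^{(p^e-1) \cdot \alpha}$ is generated by monomials $\x^{\EM \k}$ with $|\k| = (p^e-1) \cdot \alpha$, the non-containment yields a $\k \in \mathbb{N}^n$ with $|\k| = (p^e-1) \cdot \alpha$ and $\EM \k \vleq (p^e - 1) \cdot \vone_m$. Setting $\eeta := \frac{1}{p^e - 1} \cdot \k \in \mathbb{Q}^n_{\geq 0}$, we find $\EM \eeta \vleq \vone_m$ and $|\eeta| = \alpha$, so $\eeta \in \Pmax$ with $(p^e-1) \cdot \eeta = \k \in \mathbb{N}^n$.

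For the furthermore, assume the equivalent conditions hold and $\Newton$ is in diagonal position. The idea is that every generator of $(\sM)^{(p^e-1) \cdot \alpha}$ is either in $\bracket{\m}{e}$ or equal to $x_1^{p^e-1} \cdots x_m^{p^e-1}$. Indeed, take any $\k \in \mathbb{N}^n$ with $|\k| = (p^e-1) \cdot \alpha$ and $\x^{\EM \k} \notin \bracket{\m}{e}$. As above, $\frac{1}{p^e-1} \cdot \k \in \Pmax$, so Lemma \ref{MaximalPointDP: L} forces $k_{r+1} = \cdots = k_n = 0$ and $\EM \k = (p^e-1) \cdot \vone_m$, whence $\x^{\EM \k} = x_1^{p^e-1} \cdots x_m^{p^e-1}$. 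This yields the chain of containments
\[
\(\sM\)^{(p^e-1) \cdot \alpha} \subseteq \( x_1^{p^e-1} \cdots x_m^{p^e-1} \) + \bracket{\m}{e}.
\]
Conversely, using a witness $\eeta \in \Pmax$ with $(p^e-1) \cdot \eeta \in \mathbb{N}^n$, Lemma \ref{MaximalPointDP: L} gives $\EM \eeta = \vone_m$ with $\eeta$ supported on the first $r$ coordinates, so $\prod_{i=1}^r \x^{(p^e-1) \eta_i \aa_i} = x_1^{p^e-1} \cdots x_m^{p^e-1}$ exhibits this monomial as a member of $(\sM_{\Nsigma})^{(p^e-1) \cdot \alpha} \subseteq (\sM)^{(p^e-1) \cdot \alpha}$. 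Combining the two inclusions gives the required equivalences modulo $\bracket{\m}{e}$.

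No step looks difficult; the only substantive input is the structural description of $\Pmax$ supplied by Lemma \ref{MaximalPointDP: L}, which pins down both the support and the exact value of $\EM \eeta$ under diagonal position. The mild subtlety to watch is to verify that a \emph{single} $\eeta$ (or equivalently $\k$) from the non-containment hypothesis furnishes the exact monomial $x_1^{p^e-1} \cdots x_m^{p^e-1}$, since this is what links the seemingly weaker ideal $(\sM_{\Nsigma})^{(p^e-1) \cdot \alpha}$ to the principal ideal above.
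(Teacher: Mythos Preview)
Your proposal is correct and follows essentially the same approach as the paper's proof: both directions of the equivalence are handled by the standard passage between monomial generators $\x^{\EM\k}$ of $(\sM)^N$ and points $\frac{1}{p^e-1}\cdot\k$ of $\Pmax$, and the ``furthermore'' is obtained from Lemma~\ref{MaximalPointDP: L} exactly as you describe. Your version is slightly more explicit than the paper's in verifying the reverse inclusion $x_1^{p^e-1}\cdots x_m^{p^e-1}\in(\sM_{\Nsigma})^{(p^e-1)\cdot\alpha}$ via a witness $\eeta$, but this is just unpacking what the paper leaves implicit.
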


\begin{proof}  Recall that $\( \sM \)^N$ is generated by the monomials $\x^{\EM \k}$ with $| \k | = N$.  If $\eeta \in \Pmax$ and $(p^e-1) \cdot \eeta \in \mathbb{N}^n$, then $(p^e-1) \cdot \alpha = (p^e-1) \cdot | \eeta | \in \mathbb{N}$, and $ \x^{(p^e-1) \cdot \EM \eeta}$ is in $\( \sM \)^{(p^e-1) \cdot \alpha}$, but not in $\bracket{\m}{e}$.  Conversely, if $\( \sM \)^{(p^e-1) \cdot \alpha} \not \subseteq \bracket{\m}{e}$, there exists $\k \in \mathbb{N}^n$ such that $| \k | = (p^e-1) \cdot \alpha$ and $\x^{\EM \k} \notin \bracket{\m}{e}$, so that $\frac{1}{p^e-1} \cdot \k \in \Pmax$.

We have just seen that the monomials in $\( \sM \)^{(p^e-1) \cdot \alpha}$ not contained in $\bracket{\m}{e}$ are of the form $\x^{\EM \k}$ for some index $\k$ satisfying $\frac{1}{p^e-1} \cdot \k \in \Pmax$.  By Lemma \ref{MaximalPointDP: L}, $\EM \k = (p^e-1) \cdot \vone_m$ and $k_i = 0$ for $r+1 \leq i \leq n$, so that $\x^{\EM \k} = \x^{(p^e-1) \cdot \vone_m}$ is in $\( \sM_{\Nsigma} \)^{(p^e-1) \cdot \alpha}$.
\end{proof}

\begin{Definition}  
\label{Support: D}
If $g = \sum_{\bold{I}} \uu_{\bold{I}} \cdot \x^{\bold{I}}$ is a polynomial over a field, we will use $\Support(g)$ to denote $\{ \x^{\bold{I}} : \uu_{\bold{I}} \neq 0 \}$, the set of \emph{supporting monomials} of $g$.  If $\sigma \subseteq \Support{g}$, we will use $g_{\sigma}$ to denote the polynomial $\sum_{\x^{\bold{I}} \in \sigma} u_{\bold{I}} \cdot \x^{\bold{I}}$, so that $\Support(g_{\sigma}) =\sigma$.
\end{Definition}

\begin{Proposition}
\label{SingleCoeff: P}
Let $f = \sum_{i=1}^n u_i \x^{\aa_i}$ be a polynomial over a field $\L$ of prime characteristic $p$ with $\Support(f) = \sM$.  If $\Newton$ is in diagonal position and  $(p^e - 1) \cdot \eeta \in \mathbb{N}^n$ for some $\eeta \in \Pmax$,  there exists a non-zero polynomial $\Theta_e \in \mathbb{Z}[t_1, \cdots, t_r]$ satisfying the following conditions:

\begin{enumerate}
\item $f^{(p^e-1) \cdot \alpha} \equiv \(f_{\Nsigma}\)^{(p^e-1) \cdot \alpha} \equiv \Theta_e (u_1, \cdots, u_r)\cdot x_1^{p^e-1} \cdots x_m^{p^e-1} \bmod \bracket{\m}{e}$. 
\item $\Theta_{ed}(u_1, \cdots, u_r) = {\Theta_{e} (u_1, \cdots, u_r)}^{\frac{p^{ed}-1}{p^e-1}}$ for every $d \geq 1$.
\item If $e = 1$ and $\alpha \leq 1$, then $\Theta_1$ has non-zero image in $\mathbb{F}_p[t_1, \cdots, t_r]$.
\end{enumerate}
\end{Proposition}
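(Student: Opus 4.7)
The plan is to construct $\Theta_e$ directly from the multinomial expansion of $f^{(p^e-1)\alpha}$, and verify all three properties by identifying which terms of this expansion survive modulo the relevant Frobenius powers of $\m$.

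Setting $N := (p^e-1)\alpha$, I would expand
\[
f^N = \sum_{|\k|=N} \binom{N}{\k}\, u_1^{k_1} \cdots u_n^{k_n}\, \x^{\EM\k}.
\]
By Corollary~\ref{Leftover: C}, a monomial $\x^{\EM\k}$ appearing here survives modulo $\bracket{\m}{e}$ only when $\tfrac{1}{p^e-1}\k \in \Pmax$; Lemma~\ref{MaximalPointDP: L} then forces $k_{r+1} = \cdots = k_n = 0$ and $\EM\k = (p^e-1)\vone_m$, giving $\x^{\EM\k} = x_1^{p^e-1}\cdots x_m^{p^e-1}$. I would then define
\[
\Theta_e(t_1, \ldots, t_r) := \sum \binom{N}{k_1, \ldots, k_r}\, t_1^{k_1}\cdots t_r^{k_r},
\]
summed over $(k_1, \ldots, k_r) \in \mathbb{N}^r$ with $|\k|=N$ and $\sum k_i \aa_i = (p^e-1)\vone_m$. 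This sum is nonempty (containing $(p^e-1)\eeta$, by Lemma~\ref{MaximalPointDP: L} applied to $\eeta$) and the multinomial coefficients are positive integers, so $\Theta_e$ is a nonzero integer polynomial. Since the analogous expansion for $(f_{\Nsigma})^N$ only involves $u_1, \ldots, u_r$ and yields the identical surviving terms, (1) follows.

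For (2), I would exploit the factorization
\[
f^{(p^{ed}-1)\alpha} = \prod_{i=0}^{d-1} \left( f^{(p^e-1)\alpha} \right)^{p^{ei}}
\]
coming from $(p^{ed}-1)/(p^e-1) = 1 + p^e + \cdots + p^{e(d-1)}$. Writing $f^{(p^e-1)\alpha} = M + C$ with $M := \Theta_e(u_1,\ldots,u_r)\,(x_1\cdots x_m)^{p^e-1}$ and $C \in \bracket{\m}{e}$, iterated Frobenius in characteristic $p$ gives $(M+C)^{p^{ei}} = M^{p^{ei}} + C^{p^{ei}}$; and since $C$ lies in the ideal generated by the $p^e$-th powers $x_j^{p^e}$, a further Frobenius application places $C^{p^{ei}}$ in $\bracket{\m}{e(i+1)} \subseteq \bracket{\m}{ed}$. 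Multiplying the $d$ factors modulo $\bracket{\m}{ed}$ and discarding any cross-term containing some $C^{p^{ei}}$ leaves
\[
f^{(p^{ed}-1)\alpha} \equiv \Theta_e(u_1,\ldots,u_r)^{(p^{ed}-1)/(p^e-1)} \cdot (x_1\cdots x_m)^{p^{ed}-1} \pmod{\bracket{\m}{ed}}.
\]
Comparing with the instance of (1) at level $ed$ (valid because $(p^{ed}-1)\eeta = \tfrac{p^{ed}-1}{p^e-1}(p^e-1)\eeta \in \mathbb{N}^n$) and using that $(x_1\cdots x_m)^{p^{ed}-1}$ is a basis monomial of $\L[\x]/\bracket{\m}{ed}$ yields (2).

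For (3), I would single out the term $\k = (p-1)\eeta$ in $\Theta_1$. Its coefficient is the multinomial $\binom{(p-1)\alpha}{(p-1)\eta_1, \ldots, (p-1)\eta_r}$. Since $\eeta \in \P \subseteq [0,1]^n$, each $(p-1)\eta_i$ is a single base-$p$ digit in $\{0,1,\ldots,p-1\}$, and these digits sum to $(p-1)\alpha \leq p-1$ by the hypothesis $\alpha \leq 1$; hence they add without carrying, and Lemma~\ref{LucasLemma} forces the coefficient to be nonzero modulo $p$. I expect the main obstacle to be the bookkeeping in (2), specifically justifying that each remainder $C^{p^{ei}}$ lies in $\bracket{\m}{e(i+1)}$ (which requires a second Frobenius application to the explicit generators of $\bracket{\m}{e}$) and that every cross-term in the product expansion is absorbed into $\bracket{\m}{ed}$.
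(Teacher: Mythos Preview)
Your proofs of (1) and (3) match the paper's approach and are correct.

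For (2), your product decomposition is morally equivalent to the paper's binomial expansion of $(M+C)^{\gamma_d}$ combined with Lucas' theorem on the base-$p$ digits of $\gamma_d = 1 + p^e + \cdots + p^{e(d-1)}$. However, your key containment claim is wrong: $\bracket{\m}{e(i+1)} \not\subseteq \bracket{\m}{ed}$ when $i+1 < d$, since Frobenius powers of $\m$ form a \emph{decreasing} chain. So the fact that $C^{p^{ei}} \in \bracket{\m}{e(i+1)}$ alone does not let you discard a cross-term modulo $\bracket{\m}{ed}$.

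The fix---and this is exactly what the paper does---is to use the $M$-factors appearing in the same cross-term. For a cross-term indexed by a nonempty $S \subseteq \{0,\ldots,d-1\}$, set $j = \max S$. Then $C^{p^{ej}} \in \bracket{\m}{e(j+1)}$, while the factors $M^{p^{ei}}$ for $i > j$ together contribute
\[
(x_1\cdots x_m)^{\sum_{i=j+1}^{d-1}\bigl(p^{e(i+1)}-p^{ei}\bigr)} = (x_1\cdots x_m)^{\,p^{ed}-p^{e(j+1)}}.
\]
Multiplying any generator $x_l^{p^{e(j+1)}}$ of $\bracket{\m}{e(j+1)}$ by $x_l^{p^{ed}-p^{e(j+1)}}$ yields $x_l^{p^{ed}} \in \bracket{\m}{ed}$, so the entire cross-term lands in $\bracket{\m}{ed}$. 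Your closing remark suggests you already suspected this bookkeeping was the crux; it is, but the resolution requires the extra powers coming from the surviving $M$-factors, not a containment between Frobenius powers.
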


\begin{proof}
By definition, $f_{\Nsigma} = \sum_{i=1}^r u_i \cdot \x^{\aa_i}$.  By  Corollary \ref{Leftover: C}, $f^{(p^e-1) \cdot \alpha} \equiv \( f_{\Nsigma} \)^{(p^e-1) \cdot \alpha} \bmod \bracket{\m}{e}$, and is a multiple of $\x^{(p^e-1) \cdot \vone_m}$ modulo $\bracket{\m}{e}$.  The multinomial theorem then shows that $f^{\Nsigma}$ is $\Theta_e(u_1, \cdots, u_r) \cdot \x^{(p^e-1) \cdot \vone_m}$ mod $\bracket{\m}{e}$, where 
\begin{equation} \label{Thetae: e} \Theta_e(t_1, \cdots, t_r) := \sum_{\stackrel{{\bkappa \in \mathbb{N}^r}, {| \bkappa | = (p^e-1) \cdot \alpha}}{\EM \bkappa = (p^e-1) \cdot \vone_m}} \binom{(p^e-1) \cdot \alpha}{\kappa} t_1^{\kappa_1} \cdots t_r^{\kappa_r} \in \mathbb{Z}[t_1, \cdots, t_r]. \end{equation}

By assumption, there exists $\eeta = (\eta_1, \cdots, \eta_n) \in \Pmax$ with $(p^e-1) \cdot \eeta \in \mathbb{N}^n$.  By Lemma \ref{MaximalPointDP: L}, $\eta_{r+1} = \cdots = \eta_n = 0$ and $\EM \eeta = \vone_m$.  Thus, the index $\bkappa = (p^e-1) \cdot \eeta$ corresponds to a non-zero summand in \eqref{Thetae: e}, and so $\Theta_e \neq 0$.  

Next, set $\gamma_{d}:= \frac{p^{ed} - 1}{p^e-1}$. As $f^{(p^e-1) \cdot \alpha} = \Theta_e \(u_1, \cdots, u_r\) \cdot \x^{(p^e-1) \cdot \vone_m} + g$ for some $g \in \bracket{\m}{e}$,  \begin{equation} \label{coeffrel: e} f^{(p^{ed}-1) \alpha} = \( f^{(p^e-1) \cdot \alpha} \)^{\gamma_d} = \sum_{a+b = \gamma_d} \binom{\gamma_d}{a,b} \(\Theta_e(u_1, \cdots, u_r) \cdot \x^{(p^e-1) \cdot \vone_m} \)^a g^b.\end{equation}

Let $a,b$ be such that $a+b = \gamma_d$ and consider the base $p$ expansions $a = \sum_{w=1}^{(d-1)e} a_{w} \cdot p^{w}$ and $b = \sum_{w=1}^{(d-1)e} b_{w} \cdot p^{w}$.  As $\gamma_d = 1+ p^e + \cdots + p^{(d-2)e} + p^{(d-1)e}$ is the base $p$ expansion of $\gamma_d$, it follows from Lemma \ref{LucasLemma} that, modulo p, \begin{equation} \label{digitsums: e} \binom{\gamma_d}{a,b} \neq 0 \iff a_{e \ell} + b_{e \ell} = 1 \text{ for } 0 \leq \ell \leq d-1 \text{ and } a_w + b_w = 0 \text{ otherwise}. \end{equation}

Choose $a$ and $b$ with $b \neq 0$ such that $a+b = \gamma_d$ and $\binom{\gamma_d}{a,b} \neq 0 \bmod p$.  As $b \neq 0$, it follows from \eqref{digitsums: e} that $L = \max \set{ \ell : b_{e \ell} \neq 0}$ is well defined, and \eqref{digitsums: e} again shows that \[ a \geq p^{e(L+1)} + \cdots + p^{e(d-1)} \text{ and } b \geq p^{eL}.\]
As $g \in \bracket{\m}{e}$, it follows that $g^b \in \bracket{\m}{e(L+1)}$, and so 
\begin{align} \label{digitssum2: e}
\( \x^{(p^e-1) \cdot \vone_m}\)^a \cdot g^b = &  (x_1 \cdots x_m)^{(p^e-1) a } \cdot g^b \\ 
\in & \( x_1 \cdots x_m\)^{(p^e-1)( p^{e(L+1)} + \cdots + p^{e(d-1)})} \cdot \bracket{\m}{e(L+1)} \notag \\ 
= & \( x_1 \cdots x_m\)^{p^{ed}-p^{e(L+1)} }\cdot \bracket{\m}{e(L+1)} \subseteq \bracket{\m}{ed}. \notag
\end{align}

Thus, substituting the conclusion from \eqref{digitssum2: e} into \eqref{coeffrel: e}, we see that the only non-zero summand of $f^{(p^{ed}-1) \cdot \alpha}$ not contained in $\bracket{\m}{ed}$ corresponds to the indices $b = 0, a = \gamma_d$, i.e. $f^{(p^{ed}-1) \cdot \alpha} \equiv \( \Theta_e(u_1, \cdots, u_r) \cdot \x^{(p^e-1) \cdot \vone_m}\)^{\gamma_d}  \equiv \Theta_e(u_1, \cdots, u_r)^{\gamma_d} \cdot \x^{(p^{ed}-1) \cdot \vone_m} \bmod \bracket{\m}{ed}$.

For the last point, note that if $\alpha \leq 1$, then $(p-1) \alpha < p$, and hence the binomial coefficients in \eqref{Thetae: e} are non-zero modulo $p$.
\end{proof}

Let $f$ be as in Proposition \ref{SingleCoeff: P}, and suppose that $\Newton$ is in diagonal position, $(p-1) \cdot \eeta \in \mathbb{N}^n$ for some $\eeta \in \Pmax$, and $\alpha \leq 1$.   By  Proposition \ref{SingleCoeff: P}, the reduction of $\Theta_1$ modulo $p$ defines a non-empty closed set $Z \subseteq \mathbb{A}^n_{\L}$ satisfying the following condition:  If the coefficients of $f$ are not in $Z$, then $f^{(p-1) \cdot \alpha} \notin \bracket{\m}{}$.  By Lemma \ref{OneConditionSplittingLemma}, we conclude that $\fpt{f} \geq \alpha = \fpt{\sM}$, and it follows from Remark \ref{RationalityRemark} that $\fpt{f} = \fpt{\sM}$.  We now show that the condition that $\Newton$ be in diagonal position is not necessary to reach this conclusion.

\begin{Proposition}  
\label{GeneralFixedp: P}
Let $f$ be as in Proposition \ref{SingleCoeff: P}.  If $\fpt{\sM} \leq 1$ and $(p-1) \cdot \eeta \in \mathbb{N}^n$ for some $\eeta \in \Pmax$, there exists a non-empty closed set $Z \subseteq \mathbb{A}^n_K$ such that $\fpt{f} = \fpt{\sM}$ whenever $(u_1, \cdots, u_n)$, the coefficients of $f$, are not in $Z$.
\end{Proposition}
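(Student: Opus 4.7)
The plan is to adapt the argument of Proposition \ref{SingleCoeff: P} so that it functions without the diagonal position hypothesis on $\Newton$. The essential idea is this: rather than tracking the specific monomial $\x^{(p-1) \vone_m}$ (whose appearance in $f^{(p-1)\alpha}$ modulo $\bracket{\m}{}$ was secured in the diagonal case via Corollary \ref{Leftover: C}), I will isolate a different monomial whose coefficient in $f^{(p-1)\alpha}$ remains nonzero generically in the $u_i$ and which lies outside $\bracket{\m}{}$. Write $\alpha = \fpt{\sM}$ and fix $\eeta \in \Pmax$ with $(p-1)\eeta \in \mathbb{N}^n$, then set $\k_0 := (p-1)\eeta \in \mathbb{N}^n$. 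By construction $|\k_0| = (p-1)\alpha \in \mathbb{N}$ and $\EM \k_0 = (p-1) \EM \eeta \vleq (p-1) \vone_m$ (the latter since $\eeta \in \P$), so in particular $\x^{\EM \k_0} \notin \bracket{\m}{}$.

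Next, expanding
\[ f^{(p-1)\alpha} = \sum_{|\k| = (p-1)\alpha} \binom{(p-1)\alpha}{\k} u^{\k} \cdot \x^{\EM \k} \]
and regarding the $u_i$ as indeterminates $t_i$, the coefficient of the monomial $\x^{\EM \k_0}$ is the polynomial
\[ \Psi(t_1, \ldots, t_n) := \sum_{\stackrel{\k \in \mathbb{N}^n,\ |\k| = (p-1)\alpha}{\EM \k = \EM \k_0}} \binom{(p-1)\alpha}{\k} t^{\k} \in K[t_1, \ldots, t_n]. \]
I then claim $\Psi \not\equiv 0$: distinct indices $\k$ yield distinct monomials $t^{\k}$, so it suffices to verify that the $\k_0$-summand is nonzero modulo $p$. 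Each entry of $\k_0$ lies in $[0, p-1]$ and $|\k_0| = (p-1)\alpha \leq p-1$ (using the hypothesis $\fpt{\sM} \leq 1$), so the digits of $k_{0,1}, \ldots, k_{0,n}$ all occupy the first base-$p$ position and sum to at most $p-1$; in other words, they add without carrying. Lemma \ref{LucasLemma} then yields $\binom{(p-1)\alpha}{\k_0} \not\equiv 0 \bmod p$, so $\Psi$ is a nonzero polynomial.

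Setting $Z$ to be the vanishing locus of $\Psi$ in $\mathbb{A}^n_K$ produces a proper closed subset containing $\0$ (since $\Psi$ has no constant term), hence nonempty. When $(u_1, \ldots, u_n) \notin Z$ one has $\Psi(u_1, \ldots, u_n) \neq 0$, so the coefficient of $\x^{\EM \k_0}$ in $f^{(p-1)\alpha}$ is nonzero in $K$, whence $f^{(p-1)\alpha} \notin \bracket{\m}{}$. Since $(p-1)\alpha \in \mathbb{N}$, Lemma \ref{OneConditionSplittingLemma} then gives $\fpt{f} \geq \alpha$, and combining with the bound $\fpt{f} \leq \min\{1, \fpt{\sM}\} = \alpha$ from Remark \ref{RationalityRemark} forces $\fpt{f} = \fpt{\sM}$.

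The main conceptual subtlety, and the reason the argument must depart from Proposition \ref{SingleCoeff: P}, is that without diagonal position I no longer know that a maximal $\k \in (p-1) \Pmax \cap \mathbb{N}^n$ must satisfy $\EM \k = (p-1)\vone_m$; consequently the various $\k$'s giving elements of $(p-1) \Pmax$ may spread over many distinct $\x$-monomials in the expansion of $f^{(p-1)\alpha}$, and in principle their contributions to a single $\x$-monomial could cancel. The workaround is to abandon a fixed $\x$-target and instead pin down the specific monomial $\x^{\EM \k_0}$ determined by the chosen $\eeta$, exploiting the fact that in the $u$-variables the monomial $u^{\k_0}$ is isolated inside $\Psi$ and therefore cannot be cancelled by any competing summand.
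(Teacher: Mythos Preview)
Your argument is correct and is essentially identical to the paper's own proof: your polynomial $\Psi$ is precisely the paper's $\Theta_{\eeta}$, you take the same closed set $Z = \mathbb{V}(\Psi)$, and you invoke Lemma~\ref{OneConditionSplittingLemma} and Remark~\ref{RationalityRemark} in the same way to conclude. The only minor additions are your explicit observation that $Z$ is non-empty (since $\Psi$ has no constant term) and the expository final paragraph, neither of which appears in the paper.
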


\begin{Remark}  The condition that $(p-1) \cdot \eeta \in \mathbb{N}^n$ in Proposition \ref{GeneralFixedp: P} is necessary.  Indeed, suppose that $\sM = \set{ x_1^{n}, \cdots, x_n^{n}}$.  We have seen in Example \ref{DiagonalPolytope: E} that $\Pmax = \set{ \( \frac{1}{n}, \cdots, \frac{1}{n} \)}$, so that $\fpt{\sM} = \frac{1}{n} + \cdots + \frac{1}{n} = 1$ by Proposition \ref{Monomialfpt: P}.  However, it is shown in \cite{Diagonals} that if $p \not \equiv 1 \bmod n$, then $\fpt{f} < \fpt{\sM}$ for every polynomial $f$ with $\Support(f) = \sM$.
\end{Remark}

\begin{proof}[Proof of Proposition \ref{GeneralFixedp: P}]
As explained in the  paragraph immediately preceding the statement of Proposition \ref{GeneralFixedp: P}, it suffices to define a non-empty closed set $Z \subseteq \mathbb{A}^n_K$ such that $f^{(p-1) \cdot \alpha} \notin \bracket{\m}{}$ whenever $(u_1, \cdots, u_n) \notin Z$.  Consider the polynomial 
\begin{equation} \label{GammaPoly: e} \Theta_{\eeta} (t_1, \cdots, t_n) = \sum_{\stackrel{ \k \in \mathbb{N}^n, | \k | = (p-1) \cdot \alpha}{\EM \k = (p-1) \EM \eeta}} \binom{ (p-1) \cdot \alpha }{k_1, \cdots, k_n} t_1^{k_1} \cdots t_n^{k_n} \in \mathbb{Z}[t_1, \cdots, t_n].\end{equation}
As $\alpha \leq 1$, each of the binomial coefficients in \eqref{GammaPoly: e} is not zero, and $\k = (p-1) \cdot \eeta$ corresponds to a summand of $\Theta_{\eeta}$, so that $\Theta_{\eeta} \neq 0 \bmod p$.  The multinomial theorem shows that $\Theta_{\eeta} \( u_1, \cdots, u_n \)$ is the coefficient of the monomial $\x^{ (p-1) \EM \eeta}$ (which is not in $\bracket{\m}{}$ as $\eeta \in \Pmax$) in $f^{(p-1) \alpha}$.  Thus, we may take $Z = \mathbb{V} \( \Theta_{\eeta} \) \subseteq \mathbb{A}^n_{\L}$.
\end{proof}

Let $f$ be a polynomial with $\Support(f) = \sM$.  By Remark \ref{RationalityRemark}, 
\begin{equation}
\label{maxboundexposition: e} \fpt{f} \leq \min \set{1, \fpt{\sM} }.
\end{equation}
We have seen in Proposition \ref{GeneralFixedp: P} that if $\fpt{\sM} \leq 1$ and $(p-1) \cdot \eeta \in \mathbb{N}^n$ for some $\eeta \in \Pmax$, then we have equality in \eqref{maxboundexposition: e} for a general choice of coefficients.  The following theorem shows that if $\fpt{\sM} > 1$ and $\fL$ has an isolated singularity at the origin, then equality in \eqref{maxboundexposition: e} holds provided that $p$ is large enough.

\begin{Lemma} \cite[Lemma 2.3]{Fed1983}
\label{Fedderfpt<1: L}
Let $f$ be a polynomial over a field of characteristic $p>0$ with $\Support(f) = \sM$.  Suppose $ \fpt{\sM} > 1 > \fpt{f}$  and that $f_{\Nsigma}$ has an isolated singularity at the origin, so that $\( x_1^N, \cdots, x_m^N\) \subseteq \( \frac{ \partial f_{\Nsigma} }{\partial x_1}, \cdots, \frac{ \partial f_{\Nsigma} }{\partial x_m}\)$ for some $N \geq 1$.  If $p^e > N$, then $\new{f_{\Nsigma}}{e} \geq \( p^e-N \) \cdot \fpt{\sM}$.  In particular, $p < \( N \cdot \frac{ \fpt{\sM} }{\fpt{\sM}-1} \)^{1/e}$.
\end{Lemma}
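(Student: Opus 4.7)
The plan is to adapt Fedder's original argument, which combines Euler's identity for the quasi-homogeneous polynomial $f_{\Nsigma}$ with the fact that, in characteristic $p$, every partial derivative $\partial_j$ preserves $\bracket{\m}{e}$. First, since each $\aa_j$ supporting $f_{\Nsigma}$ lies in $\Nsigma$, writing $L_{\Nsigma} = \beta_1 z_1 + \cdots + \beta_m z_m$ and using $L_{\Nsigma}(\aa_j) = 1$ gives Euler's identity
\[
\sum_{j=1}^m \beta_j x_j \partial_j f_{\Nsigma} = f_{\Nsigma},
\]
realizing $f_{\Nsigma}$ as quasi-homogeneous of weighted degree $1$ with respect to the weights $\beta_j$. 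Evaluating $L_{\Nsigma}$ at $\frac{1}{\alpha}\vone_m$ gives $\beta_1 + \cdots + \beta_m = \alpha = \fpt{\sM}$.

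Set $k := \new{f_{\Nsigma}}{e}$, so $f_{\Nsigma}^{k+1} \in \bracket{\m}{e}$ while $f_{\Nsigma}^{k} \notin \bracket{\m}{e}$. Assume first that $p \nmid (k+1)$. Since $\partial_j(x_i^{p^e}) = 0$, applying $\partial_j$ to $f_{\Nsigma}^{k+1} \in \bracket{\m}{e}$ yields $(k+1) f_{\Nsigma}^{k} \partial_j f_{\Nsigma} \in \bracket{\m}{e}$, and dividing by the unit $k+1$ gives $f_{\Nsigma}^{k} \partial_j f_{\Nsigma} \in \bracket{\m}{e}$ for every $j$. Combined with the isolated-singularity hypothesis $(x_1^N, \ldots, x_m^N) \subseteq J := (\partial_1 f_{\Nsigma}, \ldots, \partial_m f_{\Nsigma})$, this upgrades to $f_{\Nsigma}^{k} \cdot (x_1^N, \ldots, x_m^N) \subseteq \bracket{\m}{e}$. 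A routine monomial computation in $R/\bracket{\m}{e}$ (valid since $p^e > N$) yields
\[
\(\bracket{\m}{e} : (x_1^N, \ldots, x_m^N)\) = \bracket{\m}{e} + \(x_1^{p^e - N} \cdots x_m^{p^e - N}\),
\]
so $f_{\Nsigma}^{k}$ reduces modulo $\bracket{\m}{e}$ to a nonzero multiple of $x_1^{p^e - N} \cdots x_m^{p^e - N}$. But $f_{\Nsigma}^{k}$ is quasi-homogeneous of weighted degree $k$, while every monomial $\x^{\bb}$ with $p^e - N \leq b_j < p^e$ for all $j$ has weighted degree $\sum_j \beta_j b_j \geq (p^e - N)\alpha$. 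Comparing degrees forces $k \geq (p^e - N)\alpha$.

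The case $p \mid (k+1)$ is the main technical hurdle, as the Leibniz-based derivative argument degenerates. I would handle it by writing $k+1 = p^s m$ with $p \nmid m$, deducing $f_{\Nsigma}^{m} \in \bracket{\m}{e-s}$ via the faithful flatness of the Frobenius endomorphism, reapplying the previous case at level $e-s$, and reassembling via the general inequality $\new{f_{\Nsigma}}{e} \geq p^s \new{f_{\Nsigma}}{e-s}$; recovering the sharp constant $N$ in this case may require replacing $\partial_j$ with a suitable Hasse derivative. The ``in particular'' statement then follows immediately: the bound $\new{f_{\Nsigma}}{e} \leq p^e - 1$ holds for every polynomial in $\m$ (since $f_{\Nsigma}^{p^e} \in \bracket{\m}{e}$ by Frobenius), so combining with $\new{f_{\Nsigma}}{e} \geq (p^e - N)\alpha$ gives $(p^e - N)\alpha \leq p^e - 1$, hence $p^e(\alpha - 1) < N\alpha$. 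Since $\alpha > 1$ by hypothesis, this rearranges to $p < \(N \fpt{\sM}/(\fpt{\sM} - 1)\)^{1/e}$.
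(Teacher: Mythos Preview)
Your argument never invokes the hypothesis $\fpt{f}<1$, and this is fatal. Nothing you wrote rules out $\fpt{f_{\Nsigma}}=1$; note that the hypothesis constrains $f$, not $f_{\Nsigma}$, and there is no trivial comparison between $\fpt{f}$ and $\fpt{f_{\Nsigma}}$. If $\fpt{f_{\Nsigma}}=1$ then $k=\new{f_{\Nsigma}}{e}=p^e-1$ for \emph{every} $e$, so $k+1=p^e$ is always divisible by $p$ and you are perpetually in your ``main technical hurdle'' case. Moreover your proposed workaround collapses: writing $k+1=p^s m$ gives $s=e$, $m=1$, and passing to level $e-s=0$ is vacuous. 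Even ignoring this degenerate case, your descent only yields $k\geq (p^e-p^sN)\alpha$, and your hope of repairing this with Hasse derivatives cannot succeed, because the inequality $\new{f_{\Nsigma}}{e}\geq(p^e-N)\alpha$ is simply \emph{false} when $\fpt{f_{\Nsigma}}=1$ and $e$ is large (since $\alpha>1$).

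What the paper does, and you omit, is to first use $\fpt{f}<1$ to force $f_{\Nsigma}^{p^e-1}\in\bracket{\m}{e}$. The argument is: Lemma~\ref{OneConditionSplittingLemma} with $\lambda=1$ gives $f^{p^e-1}\in\bracket{\m}{e}$, and then one must show that no monomial of $f_{\Nsigma}^{p^e-1}$ lying outside $\bracket{\m}{e}$ can be cancelled by a monomial coming from $(f-f_{\Nsigma})$. This cancellation is ruled out by applying $L_{\Nsigma}$ to the exponent vectors and using $L_{\Nsigma}(\aa_i)=1$ for $i\leq r$ versus $L_{\Nsigma}(\aa_i)>1$ for $i>r$. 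Once $\new{f_{\Nsigma}}{e}\leq p^e-2$ is in hand, the rest of your argument---differentiate, pass to the colon ideal $\(\bracket{\m}{e}:(x_1^N,\ldots,x_m^N)\)$, compare weighted degrees via $L_{\Nsigma}$---matches the paper's. Your framing via Euler's identity and quasi-homogeneity of $f_{\Nsigma}$ is exactly the paper's use of $L_{\Nsigma}$, just in different language. (You are right, incidentally, that the divisibility issue $p\mid(k+1)$ deserves care even after one knows $k\leq p^e-2$; the paper's parenthetical does not fully address it.)
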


\begin{Remark}
Lemma \ref{Fedderfpt<1: L} is a generalization of \cite[Lemma 2.3]{Fed1983}, which applies to \emph{quasi-homogeneous} polynomials.  One may verify that $f$ is quasi-homogeneous in the sense of \cite{Fed1983} if and only if $\Newton$ is in diagonal position and $f = \fL$.
\end{Remark}


\begin{Remark}
\label{2dimS: R}
If $m = 2$, the hypothesis that $\fL$ have an isolated singularity at $\m$ is superfluous, as it is implied by the assumption that $\fpt{\sM} > 1$.  Indeed, we see from \eqref{monomialidealequalities: e} that $\fpt{\sM} > 1$ if and only if $(1,1) \in \Newton^{\circ}$, the interior of $\Newton$.  In this simplified setting, it is apparent that $(1,1) \in \Newton^{\circ}$ if and only if, after possibly reordering the variables, $\Nsigma= \set{ x_1, x_2^d}$ for some $d \geq 1$.  In this case, we see that $\Newton$ is in diagonal position, and that $f_{\LLambda}$ is a $K^{\ast}$-linear combination of $x_1$ and $x_2^d$, and is thus regular at all points.
\end{Remark}

\begin{proof}[Proof of Lemma \ref{Fedderfpt<1: L}]
As $\fpt{f} < 1$, it follows from setting $\lambda = 1$ in Lemma \ref{OneConditionSplittingLemma} that $f^{p^e-1} \in \bracket{\m}{e}$.  We claim that $\fL^{p^e-1} \in \bracket{\m}{e}$ as well.  By definition, there exists $g$ such that \begin{equation} \label{Fedder1: e} f^{p^e-1} = \fL^{p^e-1} + g. \end{equation}  If $\fL^{p^e-1} \notin \bracket{\m}{e}$, there exists a monomial $\Support(\fL^{p^e-1})$ not contained in $\bracket{\m}{e}$.  By definition such a monomial is of the form $\x^{\EM \bv}$ for some index $\bv = \sum_{i=1}^r v_i \cdot \be_i$.  As $f^{p^e-1} \in \bracket{\m}{e}$, it follows from \eqref{Fedder1: e} that there exists a monomial $\x^{\EM \bw} \in \Support(g)$ whose corresponding summand cancels the one determined by $\x^{\EM \bv}$, so that $|\bw| = |\bv|, \EM \bw = \EM \bv$.  It follows by definition of $g$ that $\bw = \sum_{i=1}^n w_i \cdot \be_i$, with $w_i \neq 0$ for some $i > r$.  As $L_{\LL} \equiv 1$ on $\Nsigma$,
\begin{align}
| \bv | = \sum_{i=1}^r v_i = \sum_{i=1}^r v_i \cdot L_{\LL} ( \aa_i ) = L_{\LL} \( \EM \bv \) & = L_{\LL} \( \EM \bw \)  = L_{\LL} \( \sum_{i=1}^r w_i \cdot \aa_i + \sum_{i=r+1}^n w_i \cdot \aa_i \) \notag \\  
& = \sum_{i=1}^r w_i + \sum_{i=r+1}^n w_i \cdot L(\aa_i) > | \bw | = | \bv |. \label{Fedder2: e}
\end{align}

\noindent In obtaining the contradiction in \eqref{Fedder2: e}, we used that $w_i \neq 0$ for some $i > r$ and that $L_{\LL} \( \aa_i \) > 1$ whenever $i > r$.  

Next, let $\new{\LL}{e} : = \new{\fL}{e}$ be as in \eqref{nu: deq}.  We have just shown that $\fL^{p^e-1} \in \bracket{\m}{e}$, and by definition $\new{\LL}{e} \leq p^e-2$.  As $\fL^{\new{\LL}{e} + 1} \in \bracket{\m}{e}$,  taking derivatives (keeping in mind that $\new{\LL}{e} \leq p^e-2$ and that we are over $\mathbb{F}_p$) shows that $\frac{\partial \fL}{\partial x_i} \cdot \fL^{\new{\LL}{e}} \in \bracket{\m}{e} \text{ for } 1 \leq i \leq n$.  It follows from this, and our original hypotheses, that 
\[ \fL^{\new{\LL}{e}} \in \( \bracket{\m}{e} : \frac{\partial \fL}{\partial x_1}, \cdots,  \frac{\partial \fL}{\partial x_m} \) \subseteq \( \bracket{\m}{e} : x_1^N, \cdots, x_m^N \) = \( x_1 \cdots x_m \)^{p^e-N}.\]
Thus, there exists $\k = \sum_{i=1}^r k_i \be_i \in \mathbb{N}^r$ with $| \k | = \new{\LL}{e}$ such that $\x^{\EM \k} \in \Support \( \fL^{\new{\LL}{e}} \)$ and $\EM \k \vgeq (p^e - N ) \cdot \vone_m$.  Applying $L_{\LL}$ to this inequality yields \[ \new{\LL}{e} =  \sum_{i=1}^n k_i \cdot L_{\LL} ( \aa_i) = L_{\LL} \( \EM \k \) \geq \( p^e - N \) \cdot L_{\LL} \( \vone_m \) = (p^e-N) \cdot \alpha, \] where we have again used that $L_{\LL} \( \vone_m \) = \alpha \cdot L_{\LL} \( \frac{1}{\alpha} \cdot \vone_m \) = \alpha$.  A straightforward manipulation shows that $\new{\LL}{e} \geq (p^e-N) \cdot \alpha$ if and only if $\alpha N \geq p^e \alpha - \new{\LL}{e}$, and so \begin{equation} \label{Fedder3: e} \alpha N \geq p^e \alpha - \new{\LL}{e}  = p^e (\alpha - 1) + p^e - \new{\LL}{e} > p^e (\alpha - 1), \end{equation}
where we have used that $\new{\LL}{e} \leq p^e-1$.  As we are assuming that $\alpha > 1$, \eqref{Fedder3: e} shows that $p^e <  \( \frac{\alpha}{\alpha-1} \cdot N \)$, and so we are done.
\end{proof}

\subsection{The unique maximal point condition}

\begin{Definition}
\label{UniqueMaximalPoint: D}
We say that $\P$ \emph{contains a unique maximal point} if $^{\#} \Pmax = 1$.
\end{Definition}

\begin{Remark}
\label{RationalCoordinates: R}
By definition, $\Pmax$ is a face of $\P$.  Thus, if $\Pmax = \set{ \eeta }$, then $\eeta$ must be a vertex of $\P$, and as such has rational coordinates.  Additionally, if $H = \set{ \s \in \mathbb{R}^n : L(\s) \leq 1}$ is a (lower) halfspace defined by a linear form $L$ with rational coefficients, then every vertex of the rational polyhedral set $\P \cap H$ must also have rational coordinates.
\end{Remark}

\begin{Example}
The polytope from Example \ref{DiagonalPolytope: E} always has a unique maximal point.  If $\P$ is the polytope from Example \ref{PyramidPolytope: E}, we see that $\P$ contains a unique maximal point if and only if $| \vee_3 | = \frac{1}{a} + \frac{1}{b} \neq \frac{1}{c} = | \vee_4 |$, in which case $\Pmax = \set{\vee_3}$ or $\set{\vee_4}$.  If $\frac{1}{a} + \frac{1}{b} = \frac{1}{c}$, then $\Pmax$ is the edge connecting $\vee_3$ and $\vee_4$.
\end{Example}


\begin{Remark}  Recall that a set $\set{ \bb_1, \cdots, \bb_r}$ is said to be \emph{affinely independent} if $\0$ is the unique solution to the system of equations $\sum_{i=1}^n k_i \cdot \bb_i = \0$ and $\sum_{i=1}^n k_i = 0$.  If $\Newton$ is in diagonal position, it follows from Proposition \ref{MaximalPointDP: L} that $\P$ contains a unique maximal point if and only if there is a unique solution to the system $\s \vgeq 0, \EM \s = \vone_m$, and $s_{r+1} = \cdots = s_n = 0$.  In this case, we see that $^{\#}\Pmax = 1$ if the exponents of $\sM_{\Nsigma}$ are affinely independent.  
\end{Remark}

\begin{Lemma}  
\label{UniqueCoeffLemma}
Suppose  $\P$ has a unique maximal point $\eeta \in \P$.
\begin{enumerate}
\item If  $| \s | = | \tr{\eeta}{e} |$ and $ \EM \s = \EM \tr{\eeta}{e}$ for some $\s \vgeq \0$, then $\s = \tr{\eeta}{e}$.
\item If $|\s | =| \n |, \EM \s = \EM \n$, and $\n \vleq \tr{\eeta}{e}$ for some $\n, \s \vgeq \0$, then $\s = \n$.
\end{enumerate}
\end{Lemma}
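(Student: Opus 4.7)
The plan is to leverage the uniqueness of the maximal point $\eeta$ via a single translation trick applied to both parts: given $\s$ satisfying the hypotheses of either statement, I will produce a nonnegative vector $\bv$ (depending on the part) such that $\s + \bv$ lies in $\P$ and has coordinate sum equal to $\alpha = | \eeta |$. This places $\s + \bv$ in $\Pmax = \set{\eeta}$, and the equation $\s + \bv = \eeta$ then determines $\s$.

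For part (1), I take $\bv := \eeta - \tr{\eeta}{e}$, which is componentwise nonnegative by Lemma \ref{TruncationLemma} (recall $\tr{\eta_i}{e} \leq \eta_i$ in each coordinate). Hence $\s + \bv \vgeq \0$. The hypothesis $\EM \s = \EM \tr{\eeta}{e}$ telescopes $\EM (\s + \bv)$ down to $\EM \eeta \vleq \vone_m$, so $\s + \bv \in \P$; and the hypothesis $| \s | = | \tr{\eeta}{e} |$ forces $| \s + \bv | = | \eeta | = \alpha$. Thus $\s + \bv \in \Pmax = \set{\eeta}$, which yields $\s = \tr{\eeta}{e}$.

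Part (2) is the same argument with $\bv := \eeta - \n$: the chain $\n \vleq \tr{\eeta}{e} \vleq \eeta$ shows $\bv \vgeq \0$, the identity $\EM \s = \EM \n$ collapses $\EM (\s + \bv)$ to $\EM \eeta \vleq \vone_m$, and $| \s | = | \n |$ gives $| \s + \bv | = | \eeta | = \alpha$. Therefore $\s + \bv \in \Pmax = \set{\eeta}$, so $\s = \n$.

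Since both parts reduce to one line once the correct translate is chosen, I do not anticipate any substantive obstacle; the whole content of the lemma is the observation that the additive structure of $\P$ plays well with the uniqueness of the maximizer, and identifying the right vector $\bv$ (namely, the deficit between the candidate point's ``base'' and $\eeta$) is the only conceptual step.
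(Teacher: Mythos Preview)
Your proof is correct and essentially identical to the paper's: both argue by translating $\s$ so as to land in $\Pmax = \{\eeta\}$. The only cosmetic difference is that for part~(2) the paper translates by $\tr{\eeta}{e} - \n$ and then invokes part~(1), whereas you translate directly by $\eeta - \n$; these amount to the same thing.
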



\begin{proof}
To prove the first statement, let $\eeta' := \s + \eeta - \tr{\eeta}{e}$.  By hypothesis, $\eeta' \vgeq \s \vgeq \0$,   $\EM \eeta' = \EM \s + \EM \eeta - \EM \tr{\eeta}{e} = \EM \eeta$, and $| \eeta' | = |\s| + | \eeta | - | \tr{\eeta}{e}| =  | \eeta |$, which shows that  $\eeta'$ is a maximal point of $\P$.  Thus $\eeta'=\eeta$, and $\s = \tr{\eeta}{e}$.  

For the second statement, let $\s' := \s + \tr{\eeta}{e} - \n$. By hypothesis, $\s'  \vgeq \0, |\s'| = | \tr{\eeta}{e} |$, and $\EM \s' = \EM \tr{\eeta}{e}$. The first statement, applied  to $\s'$, shows that $\s' = \tr{\eeta}{e}$, and thus $\s = \n$.
\end{proof}

\begin{Corollary}
\label{UniqueCoeffCorollary}  Suppose $\P$ has a unique maximal point $\eeta \in \P$, and let $f$ be a polynomial with $\Support(f) = \sM$ and coefficients $u_1, \cdots, u_n$.  
\begin{enumerate}  
\item The coefficient of the monomial $\x^{p^e \EM \tr{\eeta}{e}}$ in  $f^{ p^e | \tr{\eeta}{e} | }$
is $\binom{p^e | \tr{\eeta}{e} | }{p^e \tr{\eeta}{e}} \uu^{p^e  \tr{\eeta}{e}}.$
\item If $\n \in \frac{1}{p^e} \cdot \mathbb{N}^n$ is an index such that $\n \vleq \tr{\eeta}{e}$, then the coefficient of the monomial $\x^{p^e \EM \n}$ in $f^{ p^e |\n| }$ is $\binom{ p^e | \n | }{p^e \n } \uu^{p^e \n}$.
\end{enumerate}
\end{Corollary}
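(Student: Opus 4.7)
The plan is to expand $f^N$ for the appropriate exponent $N$ via the multinomial theorem and then invoke Lemma \ref{UniqueCoeffLemma} to collapse the sum defining the coefficient of the target monomial to a single term.

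First, for any positive integer $N$, I would write
\[ f^N = \sum_{\k \in \mathbb{N}^n,\, |\k| = N} \binom{N}{\k} \uu^{\k} \x^{\EM \k}, \]
so that the coefficient of a monomial $\x^{\EM \k_0}$ in $f^N$ is $\sum \binom{N}{\k} \uu^{\k}$, where the sum ranges over $\k \in \mathbb{N}^n$ satisfying $|\k| = N$ and $\EM \k = \EM \k_0$. The strategy is then to show that under the hypotheses of each statement, precisely one $\k$ contributes to this sum.

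For part (1), I would specialize to $N = p^e | \tr{\eeta}{e} |$ and $\k_0 = p^e \tr{\eeta}{e}$; note this index lies in $\mathbb{N}^n$ by Lemma \ref{TruncationLemma}. Given any competing index $\k \in \mathbb{N}^n$ with $|\k| = p^e | \tr{\eeta}{e} |$ and $\EM \k = p^e \EM \tr{\eeta}{e}$, the scaled vector $\s := \tfrac{1}{p^e} \k$ is a non-negative real solution to $|\s| = |\tr{\eeta}{e}|$ and $\EM \s = \EM \tr{\eeta}{e}$. Lemma \ref{UniqueCoeffLemma}(1) then forces $\s = \tr{\eeta}{e}$, i.e., $\k = p^e \tr{\eeta}{e}$, so only the claimed term survives.

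Part (2) proceeds identically with $N = p^e |\n|$ and $\k_0 = p^e \n$: any competing $\k$ yields $\s := \tfrac{1}{p^e}\k \vgeq \0$ with $|\s| = |\n|$ and $\EM \s = \EM \n$, and since the hypothesis $\n \vleq \tr{\eeta}{e}$ is in force, Lemma \ref{UniqueCoeffLemma}(2) gives $\s = \n$, hence $\k = p^e \n$. There is no real obstacle here beyond correctly matching the hypotheses of the preceding lemma; the content is entirely concentrated in the uniqueness statements already established, and the multinomial expansion is routine.
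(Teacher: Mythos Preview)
Your argument is correct and is precisely the intended one: the paper states this result as an immediate corollary of Lemma~\ref{UniqueCoeffLemma} without giving a separate proof, and the multinomial expansion combined with the uniqueness statements in that lemma is exactly what makes it immediate.
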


\begin{Corollary}  
\label{Converse: C}
Suppose that $\Newton$ is in diagonal position, $\P$ contains a unique maximal point $\eeta$, and $p$ does not divide any of the denominators in $\eeta$.  If $f$ is a polynomial with $\Support(f) = \sM$ and coefficients $u_1, \cdots, u_n$, then there exist infinitely many $e$ such that $(p^e-1) \cdot \eeta = p^e \tr{\eeta}{e}$ and $f^{(p^e-1) \cdot \alpha} = f^{p^e |\tr{\eeta}{e}| }\equiv \binom{ p^e | \tr{\eeta}{e} | }{ \tr{\eeta}{e} } \uu^{p^e \tr{\eeta}{e}} \cdot \x^{(p^e-1) \cdot \vone_m} \bmod \bracket{\m}{e}$.
\end{Corollary}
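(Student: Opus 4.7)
The plan is to locate infinitely many exponents $e$ for which $(p^e-1) \cdot \eeta \in \mathbb{N}^n$, apply the expansion provided by Proposition \ref{SingleCoeff: P}, and then use the unique maximal point hypothesis to collapse the sum defining $\Theta_e$ down to a single monomial term in the $u_i$.

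First, I would produce the needed infinite family of exponents. By Remark \ref{RationalCoordinates: R}, $\eeta$ is a vertex of $\P$ and so has rational coordinates; write $\eta_i = a_i/b_i$ in lowest terms. Since $p \nmid b_i$ by hypothesis, the multiplicative order of $p$ modulo each $b_i$ is finite, and letting $d$ be the least common multiple of these orders gives $p^{kd} \equiv 1 \bmod b_i$ for every $i$ and every $k \geq 1$. Hence $(p^{kd}-1) \cdot \eeta \in \mathbb{N}^n$ for infinitely many $e = kd$. For each such $e$, Lemma \ref{SimplifiedTruncation: L} applied coordinatewise then yields $(p^e-1) \cdot \eeta = p^e \cdot \tr{\eeta}{e}$, and summing coordinates gives $(p^e-1) \alpha = p^e |\tr{\eeta}{e}|$, confirming the two exponents of $f$ in the statement agree.

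Next, for any such $e$, I would invoke Proposition \ref{SingleCoeff: P}, whose hypotheses hold since $\Newton$ is in diagonal position and $(p^e-1) \eeta \in \mathbb{N}^n$, to obtain
\[ f^{(p^e-1)\alpha} \equiv \Theta_e(u_1, \ldots, u_r) \cdot x_1^{p^e-1} \cdots x_m^{p^e-1} \bmod \bracket{\m}{e}, \]
where $\Theta_e$ is the polynomial defined by the sum in \eqref{Thetae: e}. The heart of the argument is that the unique maximal point assumption forces this sum to have a single term. Any index $\bkappa$ contributing to $\Theta_e$ lies in $\mathbb{N}^r$, has $|\bkappa| = (p^e-1) \alpha$, and satisfies $\EM \bkappa = (p^e-1) \vone_m$. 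The rescaled vector $\frac{1}{p^e-1} \bkappa$ then meets the characterization in Lemma \ref{MaximalPointDP: L}, so it lies in $\Pmax$; by the uniqueness hypothesis it must equal $\eeta$, forcing $\bkappa = (p^e-1) \eeta = p^e \tr{\eeta}{e}$. Substituting this unique surviving index into \eqref{Thetae: e} yields precisely the binomial coefficient and monomial stated in the corollary.

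The only genuinely new step is the first one, namely producing infinitely many $e$ with $(p^e-1) \eeta \in \mathbb{N}^n$, and this is really just the observation that the coprimality of $p$ with the denominators of $\eeta$ makes the multiplicative order of $p$ finite modulo each denominator. Everything else is a direct application of the structural results already established, with Lemma \ref{MaximalPointDP: L} doing the decisive work of collapsing $\Theta_e$ to one term.
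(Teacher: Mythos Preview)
Your proposal is correct and follows essentially the same approach as the paper's proof. The paper argues identically for the existence of infinitely many $e$ via the finite multiplicative order of $p$ and invokes Lemma~\ref{SimplifiedTruncation: L} and Proposition~\ref{SingleCoeff: P} just as you do; for the final identification of the coefficient it simply remarks that this ``may be deduced from \eqref{Thetae: e} or Corollary~\ref{UniqueCoeffCorollary},'' and your argument via Lemma~\ref{MaximalPointDP: L} is precisely the first of these two options, spelled out in full.
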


\begin{proof}
If $p$ does not divide $d$, then $p$ has finite order in the multiplicative group $\( \mathbb{Z} / d \mathbb{Z} \)^{\times}$.  It follows that there exists an $e$ (and hence, infinitely many $e$) with $(p^e-1) \cdot \eeta \in \mathbb{N}^n$, and Lemma \ref{SimplifiedTruncation: L} implies that $p^e \tr{\eeta}{e} = (p^e-1) \cdot \eeta$ for such  $e$.  Applying Proposition \ref{SingleCoeff: P} shows that $f^{p^e |\tr{\eeta}{e}|}$ is congruent to $ \gamma \cdot \x^{(p^e-1) \cdot \vone_m}$ modulo $\bracket{\m}{e}$; that $\gamma = \binom{ p^e | \tr{\eeta}{e} |}{p^e \tr{\eeta}{e}} \uu^{p^e \tr{\eeta}{e}}$ may then be deduced from \eqref{Thetae: e} or Corollary \ref{UniqueCoeffCorollary}.
\end{proof}

\section{Proof of the Main Theorem}
\label{Proof: S}

\begin{Notation}  $\sM, \P$, and $\Newton$ will continue to be as in the previous section.  Furthermore, $\L$ will denote an $F$-finite field of characteristic $p$, and $f$ will denote a polynomial over $\L$ with $\Support(f) = \sM$.  
\end{Notation}

\begin{Theorem}
\label{MainTheorem} Suppose that $\P$ contains a unique maximal point $\eeta = (\eta_1, \cdots, \eta_n) \in \P$, and let $L = \sup \{ e:  \digit{\eta_1}{d} + \cdots + \digit{\eta_n}{d} \leq p-1 \text{ for } 0 \leq d \leq e\}$,  where $\digit{\eta_i}{d}$ is the $d^{\th}$ digit of $\eta_i$.
\begin{enumerate} 
\item \label{MainTheoremI} If $L = \infty$, then $\lfpt{\m}{f} = \lfpt{\m}{\sM}$.  The converse holds if $\Newton$ is in diagonal position and $p$ does not divide any of the denominators of $\eeta$.
\item \label{MainTheoremII} If $L < \infty$, then $\lfpt{\m}{f} \geq \tr{\eta_1}{L}+ \cdots +\tr{\eta_n}{L}+ \frac{1}{p^L}$.
\end{enumerate}
\end{Theorem}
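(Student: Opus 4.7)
The plan is to exhibit, in both parts, specific indices $\n(e) \in \tfrac{1}{p^e}\mathbb{N}^n$ satisfying $\n(e) \vleq \tr{\eeta}{e}$ so that Corollary \ref{UniqueCoeffCorollary}(2) gives the coefficient of $\x^{p^e \EM \n(e)}$ in $f^{p^e|\n(e)|}$ as a single multinomial term. Combined with Lucas' lemma (Lemma \ref{LucasLemma}) for non-vanishing of the multinomial coefficient and the observation that $\EM\n(e) \vleq \EM\tr{\eeta}{e} \vl \vone_m$ (strict since non-terminating expansions force $\tr{\eta_i}{e} < \eta_i$ for every $i$, provided each variable appears), this will produce monomials not in $\bracket{\m}{e}$ with non-zero coefficient.

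For part (1), assume $L = \infty$, so the digits of $\eta_1,\ldots,\eta_n$ add without carrying at every level. Then $|\eeta| = \sum_{d\geq 1} \sum_i \digit{\eta_i}{d}/p^d \leq \sum_{d\geq 1}(p-1)/p^d = 1$, so $\alpha \leq 1$. Take $\n(e) = \tr{\eeta}{e}$: by Lucas, $\binom{p^e|\tr{\eeta}{e}|}{p^e \tr{\eeta}{e}} \not\equiv 0 \bmod p$, so Corollary \ref{UniqueCoeffCorollary}(1) yields $f^{p^e|\tr{\eeta}{e}|} \notin \bracket{\m}{e}$, giving $\lfpt{\m}{f} \geq |\tr{\eeta}{e}|$. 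Letting $e \to \infty$ gives $\lfpt{\m}{f} \geq \alpha$, and Remark \ref{RationalityRemark} combined with $\alpha \leq 1$ forces equality with $\lfpt{\m}{\sM}=\alpha$. For the converse, under the additional hypotheses of diagonal position and $p$ not dividing the denominators of $\eeta$, Corollary \ref{Converse: C} applies for infinitely many $e$, and for any such $e > L$ Lucas forces the leading binomial to vanish mod $p$, hence $f^{(p^e-1)\alpha} \in \bracket{\m}{e}$ infinitely often, so Lemma \ref{OneConditionSplittingLemma} gives $\lfpt{\m}{f} < \alpha$.

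For part (2), with $L < \infty$, I plan to construct, for each sufficiently large $e$, a vector $\bold{\beta}(e) \in \mathbb{N}^n$ satisfying three conditions, and then set $\n(e) = \tr{\eeta}{L} + \bold{\beta}(e)/p^e$. The conditions are: (a) $|\bold{\beta}(e)| = p^{e-L} - 1$; (b) $\beta_i(e) \leq D_i(e) := p^e(\tr{\eta_i}{e} - \tr{\eta_i}{L}) = \sum_{d=L+1}^{e}\digit{\eta_i}{d} p^{e-d}$ so that $\n(e) \vleq \tr{\eeta}{e}$ and Corollary \ref{UniqueCoeffCorollary}(2) applies; (c) at each position $k \in [0,e-L-1]$, the base-$p$ digits of $\beta_1(e),\ldots,\beta_n(e)$ sum to at most $p-1$, which together with the fact that $p^e \tr{\eta_i}{L}$ has its digits supported in positions $[e-L, e-1]$ (where the digit sum across $i$ at position $e-d$ is $\sum_i \digit{\eta_i}{d} \leq p-1$ for $1 \leq d \leq L$ by definition of $L$) guarantees via Lucas that $\binom{p^e|\n(e)|}{p^e \n(e)} \not\equiv 0 \bmod p$. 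Once such $\bold{\beta}(e)$ is produced, Corollary \ref{UniqueCoeffCorollary}(2) gives the coefficient of $\x^{p^e \EM \n(e)}$ in $f^{p^e|\n(e)|}$ as a non-zero multinomial term, this monomial is not in $\bracket{\m}{e}$ as above, hence $\new{f}{e}/p^e \geq |\n(e)| = |\tr{\eeta}{L}| + (p^{e-L}-1)/p^e$. Letting $e \to \infty$ gives the desired bound $\lfpt{\m}{f} \geq |\tr{\eeta}{L}| + 1/p^L$.

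The main obstacle is the combinatorial construction of $\bold{\beta}(e)$ simultaneously satisfying (a), (b), and (c). The total budget $\sum_i D_i(e) = \sum_{d=L+1}^{e} S_d\, p^{e-d}$ (where $S_d = \sum_i \digit{\eta_i}{d}$) satisfies $\sum_i D_i(e) \geq S_{L+1} p^{e-L-1} \geq p^{e-L}$ since $S_{L+1} \geq p$ by definition of $L$; so (a) is feasible against (b). The key insight driving (c) is that the carrying digit sum $S_{L+1} \geq p$ at position $e-L-1$ provides exactly the room needed to accommodate a digit sum of $p-1$ at the highest relevant position, and a greedy assignment proceeding from position $e-L-1$ downwards, at each step distributing $p-1$ units of digit sum among those $\beta_i(e)$'s with remaining room below $D_i(e)$, produces the required vector; feasibility at lower positions may require allowing $\beta_i(e)$ to not be digit-wise $\leq D_i(e)$ but only value-wise $\leq D_i(e)$, exploiting the freedom to ``shift'' mass from higher positions of $D_i$ to lower positions of $\beta_i$.
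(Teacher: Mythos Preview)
Your treatment of part \eqref{MainTheoremI} is essentially the paper's argument: exhibit the monomial $\x^{p^e\EM\tr{\eeta}{e}}$ via Corollary \ref{UniqueCoeffCorollary}, invoke Lucas, and pass to the limit; for the converse, use Corollary \ref{Converse: C} and Lemma \ref{OneConditionSplittingLemma} just as the paper does. One small point you glossed over: the application of Lemma \ref{OneConditionSplittingLemma} in the converse needs $\alpha\le 1$, which the paper observes follows from the hypothesis $\fpt{f}=\alpha$ (since always $\fpt{f}\le 1$).

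For part \eqref{MainTheoremII}, your abstract framework (find $\bold{\beta}(e)$ satisfying (a)--(c)) is correct, and the paper's proof fits inside it --- but the paper's construction is far more direct than the greedy scheme you describe. Since $S_{L+1}=\sum_i\digit{\eta_i}{L+1}\ge p$, choose integers $0\le\delta_i\le\digit{\eta_i}{L+1}$ with $\sum_i\delta_i=p-1$; necessarily $\delta_j<\digit{\eta_j}{L+1}$ for some $j$, say $j=1$. Then set
\[
\ll(e)=\tr{\eeta}{L}+\Bigl(\tfrac{\delta_1}{p^{L+1}}+\tfrac{p-1}{p^{L+2}}+\cdots+\tfrac{p-1}{p^e},\ \tfrac{\delta_2}{p^{L+1}},\ \ldots,\ \tfrac{\delta_n}{p^{L+1}}\Bigr).
\]
This is already an explicit $\n(e)\vleq\tr{\eeta}{e}$ whose digit sums at every level are exactly $p-1$, with $|\ll(e)|=|\tr{\eeta}{L}|+\sum_{d=L+1}^{e}(p-1)/p^d$. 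In your language, the paper puts the digits $\delta_1,\ldots,\delta_n$ at the top position $e-L-1$ and then dumps all remaining $(p-1)$'s into the single coordinate where there is strict slack; the inequality $\delta_1+1\le\digit{\eta_1}{L+1}$ gives precisely the room $p^{e-L-1}-1$ needed to absorb them. This avoids the delicate ``value-wise versus digit-wise'' bookkeeping you flagged as the main obstacle, and is worth writing down explicitly in place of the greedy description.
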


\begin{proof}[Proof of Theorem \ref{MainTheorem}]  
 Write $f = \sum_{i=1}^n u_i \x^{\aa_i}$ as a $\L^{\ast}$-linear combination of the monomials in $\sM$.   We first prove $\eqref{MainTheoremI}$, and thus assume that entries of $\eeta$ add without carrying (in base $p$).  Corollary \ref{UniqueCoeffCorollary} give that, after gathering terms, \begin{equation} \label{MTProof1: e} \binom{ p^e | \tr{\eeta}{e} | }{p^e \tr{\eeta}{e}} \uu^{p^e \tr{\eeta}{e}} \x^{p^e\EM \tr{\eeta}{e}} \end{equation} appears as a summand of $f^{ p^e | \tr{\eeta}{e} | }$.   By definition, each $u_i \in \L^{\ast}$, so $\uu^{p^e \tr{\t}{e}} \neq 0$, while the assumption on the entries of $\eeta$ implies the integers $p^e \tr{\eta_1}{e}, \cdots, p^e \tr{\eta_n}{e}$ add without carrying as well, and applying Lemma \ref{LucasLemma} then shows that $\binom{ p^e | \tr{\eeta}{e} | }{p^e \tr{\eeta}{e}} \neq 0 \bmod p$.  Finally, $\tr{\eeta}{e} \vleq \eeta$, and so every entry of $p^e \EM \tr{\eeta}{e}$ is less than $p^e-1$.  We see then that the monomial in \eqref{MTProof1: e} is in $\Support \( f^{p^e | \tr{\eeta}{e} |} \)$, but not in  $\m^{[p^e]}$.  Thus, $f^{p^e | \tr{\eeta}{e} | } \notin \m^{[p^e]}$, and so $\frac{\new{f}{e}}{p^e} \geq | \tr{\eeta}{e}|$.  Letting $e \to \infty$ shows that $\fpt{f} \geq | \eeta |$, while the opposite inequality holds by Remark \ref{RationalityRemark}. 

We now address the second statement of the first point.  Suppose then, by means of contradiction, that $L < \infty$, yet $\fpt{f} = \fpt{\sM}  = \alpha$.  By Corollary \ref{Converse: C}, there exists $e > L$ such that $(p^e-1) \cdot \eeta \in \mathbb{N}$ and  \begin{equation} \label{conversefinally1: e} f^{(p^e-1) \cdot \alpha } = f^{p^e \tr{\eeta}{e} } \equiv \binom{ p^e | \tr{\eeta}{e} | }{p^e \tr{\eeta}{e}} \uu^{p^e \tr{\eeta}{e}} \cdot \x^{(p^e-1) \cdot \vone_m} \bmod \bracket{\m}{e}.\end{equation}

As $e > L$, it follows from the definition of $L$ that the entries of $p^e \tr{\eeta}{e}$ \emph{do not} add without carrying, so that $\binom{ p^e | \tr{\eeta}{e} | }{p^e \tr{\eeta}{e}} \equiv 0 \bmod p$ by Lemma \ref{LucasLemma}. It then follows from \eqref{conversefinally1: e} that \begin{equation} \label{conversecontradiction: e} f^{(p^e-1) \cdot \alpha} \in \bracket{\m}{e}. \end{equation}  However, the assumption that $\fpt{f} = \alpha$ implies that $\alpha \in (0,1]$, and as $(p^e-1) \cdot \alpha \in \mathbb{N}$,  Lemma \ref{OneConditionSplittingLemma} implies that $f^{(p^e-1) \cdot \alpha} \notin \bracket{\m}{e}$, contradicting \eqref{conversefinally1: e}.  

We now address the second point.  Recall that, by convention, $\digit{\eta_i}{0} = 0$, so that $L \geq 0$.  Furthermore, it follows from the definition of $L$ that $\digit{\eta_1}{L+1} + \cdots + \digit{\eta_n}{L+1} \geq p$.  After subtracting off from each $\digit{\eta_i}{L+1}$ as necessary, we obtain integers $\delta_1, \cdots, \delta_n$ such that \begin{equation} \label{deltaproof: e}\delta_1 + \cdots + \delta_n = p-1 \text{ and }0 \leq \delta_i \leq \digit{\eta_i}{L+1},\end{equation}  with the second inequality in \eqref{deltaproof: e}  being strict for at least one index.  Without loss of generality, we assume that $\delta_1 < \digit{\eta_1}{L+1}$. For $e \geq L+2$, set 
\begin{equation} \label{lambdadef} \ll(e) = \tr{\eeta}{L} +\left( \frac{\delta_1}{p^{L+1}} + \frac{p-1}{p^{L+2}} + \cdots + \frac{p-1}{p^e}, \frac{\delta_2}{p^{L+1}},  \cdots, \frac{\delta_n}{p^{L+1}} \right) . \end{equation}

We now summarize some important properties of $\ll(e)$:  By construction, $\ll(e) \in \frac{1}{p^e} \cdot \mathbb{N}^n$, and the definition of the $\delta_i$ (along with our assumption that $\delta_i < \digit{\eta_1}{L+1}$) shows that $\ll(e) \vl \tr{\eeta}{e}$.  Additionally, as $\delta_1 + \cdots + \delta_n = p-1$, it follows from the definition of $L$ that the entries of $p^e \cdot \ll(e)$ add without carrying (in base $p$). Finally, we have  
\[ | \ll(e) |= | \tr{\eeta}{L}| + \frac{1}{p^{L+1}} \cdot \left( \sum_{i=1}^n\delta_i \right) +  \frac{p-1}{p^{L+2}} + \cdots + \frac{p-1}{p^e}  =  | \tr{\eeta}{L}| + \frac{p-1}{p^{L+1}} + \frac{p-1}{p^{L+2}} + \cdots + \frac{p-1}{p^e}.   \]

These properties imply that $\binom{ p^e | \ll(e) | }{ p^e \ll(e) } \uu^{p^e \ll(e)} \x^{ p^e \EM \ll(e)}$ is a non-zero summand of $f^{p^e | \ll(e) |}$ and is not contained in $\bracket{\m}{e}$, so $f^{p^e | \ll(e) |} \notin \m^{[p^e]}$.  Thus, $\frac{\new{f}{e}}{p^e} \geq | \ll(e) | = | \tr{\eeta}{L} | + \sum_{d=L+1}^e \frac{p-1}{p^{d}}$, and the assertion follows by letting $e \to \infty$.
\end{proof}

\begin{Remark}  The estimates given in Theorem \ref{MainTheorem} can be used to calculate $\lfpt{\m}{f}$ \emph{in any characteristic} whenever $f$ is either a diagonal or binomial hypersurface \cite{Diagonals,Binomials}.
\end{Remark}

\section{Log canonical singularities and dense $F$-pure type}
\label{LCDFPT: S}

Throughout this section, $S$ will denote the polynomial ring $\mathbb{C}[x_1, \cdots, x_m]$, and $\sM$ will continue to denote $\set{ \x^{\aa_1}, \cdots, \x^{\aa_n}}$, a collection of $n$ monomials in the variables $x_1, \cdots, x_m$.  

\subsection{The log canonical threshold of a polynomial}

\begin{Definition}
\label{lct: D}  Let $f \in S$ be a non-zero polynomial such that $f(\0) = 0$.  Then,  \[ \lct{f}: = \sup \set{ \lambda > 0 : \frac{1}{|f|^{2 \lambda}} \text{ is locally integrable at } \0 }\] exists, and is called the \emph{log canonical threshold of $f$ at $\0$}.
\end{Definition}

The invariant $\llct{f}$ can be thought of as measuring the singularities of $f$ near $\0$, with smaller values corresponding to ``worse'' singularities.  One important property of these invariants is that $\lct{f} \in (0,1] \cap \mathbb{Q}$.  Though this is not obvious from Definition \ref{lct: D}, it follows immediately from an alternate characterization of $\lct{f}$  in terms of (log) resolution of singularities.  For this alternate definition, see the survey \cite{EM2006}.  

By imposing similar local integrability conditions on the members of an ideal $\a$ of $S$, one may define $\lct{\a}$, the log canonical threshold at $\0$ of $\a$, as follows:  If $\a = (f_1, \cdots, f_d)$, then  \begin{equation} \label{lctmon: e} \lct{\a} = \sup \set{ \lambda > 0 : \( |f_1|^2 + \cdots + |f_d|^2\)^{-\lambda} \text{ is locally integrable at } \0}. \end{equation}

Using resolution of singularities, one sees that $\lct{\a}$ is well defined and rational, and apparently, $\lct{f} \leq \lct{\a}$ if $f \in \a$.  We gather these facts, along with a familiar formula for $\fpt{\sM}$ which may be deduced from \cite[Example 5]{Howald2001a} below.

\begin{Proposition} \cite{Howald2001a}
\label{lctBP: P}
Let $f$ be a polynomial over $\mathbb{C}$ with $\Support(f) = \sM$.  Then, $\lct{f}$ and $\lct{\a}$ are rational numbers, and $\lct{f} \leq \min \set{ 1, \lct{\sM}}$.  We also have that $\lct{\sM}=  \max \set{ \lambda > 0 : \frac{1}{\lambda} \cdot \vone_m \in \Newton }$. 
\end{Proposition}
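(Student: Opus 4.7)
The proposition collects three standard facts about log canonical thresholds, and the plan is to verify each in turn.

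First, the rationality of $\lct{f}$ and $\lct{\a}$ is not apparent from Definition \ref{lct: D}, but is immediate from the equivalent characterization in terms of a log resolution: on a log resolution $\pi \colon Y \to \mathbb{A}^m$ of the ideal in question with relative canonical divisor $K_{Y/\mathbb{A}^m} = \sum_E a_E \cdot E$ and pullback divisor $\sum_E b_E \cdot E$, one has $\lct = \min_E (a_E + 1)/b_E$, a minimum over finitely many positive rationals; see \cite{EM2006}. This also subsumes the bound $\lct{f} \leq 1$ asserted in the text preceding Proposition \ref{lctBP: P}.

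Next, I would deduce $\lct{f} \leq \lct{\sM}$ from the general monotonicity $\lct{\a} \leq \lct{\b}$ whenever $\a \subseteq \b$. Writing each generator $f_i$ of $\a$ as an $S$-linear combination $f_i = \sum_j h_{ij} g_j$ of generators of $\b$, one obtains the pointwise estimate $\sum_i |f_i|^2 \leq C \cdot \sum_j |g_j|^2$ in a neighborhood of $\0$, so local integrability of $(\sum_i |f_i|^2)^{-\lambda}$ at $\0$ forces that of $(\sum_j |g_j|^2)^{-\lambda}$. Setting $\a = (f)$ and $\b = (\sM)$ produces the desired inequality.

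The substantive claim is Howald's formula $\lct{\sM} = \max\{ \lambda > 0 : \frac{1}{\lambda} \vone_m \in \Newton\}$ from \cite{Howald2001a}. The expected proof uses a toric log resolution: any regular simplicial refinement of the positive orthant that is compatible with the inner normal fan of $\Newton$ produces a toric modification $\pi \colon Y \to \mathbb{A}^m$ on which $(\sM) \cdot \O_Y$ is invertible and supported on the toric boundary, and along the divisor indexed by a primitive ray generator $\bv$ one computes the discrepancy $\langle \bv, \vone_m \rangle - 1$ and the vanishing order $\min_i \langle \bv, \aa_i \rangle$. Minimizing $(a+1)/b$ over all such rays and translating via linear programming duality yields the stated maximum over $\lambda$; combined with equation \eqref{monomialidealequalities: e} this recovers the equality $\lct{\sM} = \fpt{\sM}$ mentioned in the introduction. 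The principal technical obstacle is the toric resolution computation, but as the formula is precisely Howald's theorem I would simply cite it rather than reprove it.
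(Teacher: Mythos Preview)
The paper does not actually prove this proposition: it is stated with a citation to \cite{Howald2001a} and treated as a collection of known facts, with the surrounding text pointing the reader to \cite{EM2006} for the resolution-of-singularities characterization and to \cite[Example 5]{Howald2001a} for the monomial formula. Your proposal is a correct expansion of exactly these references---rationality and the bound $\lct{f}\le 1$ from the log-resolution description, the containment inequality $\lct{f}\le\lct{\sM}$ from the integrability definition via a pointwise Cauchy--Schwarz estimate, and Howald's formula via toric geometry---so there is no discrepancy in approach, only in level of detail: you supply the sketch that the paper deliberately omits.
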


Comparing the statements in Proposition \ref{lctBP: P} with those in Remark \ref{RationalityRemark} shows that log canonical thresholds and $F$-pure threshold satisfy seemingly dual conditions.  Furthermore, it follows from \eqref{monomialidealequalities: e} and Proposition \ref{lctBP: P} that 
\begin{equation} \label{allequal: e} \fpt{\sM} = \max \set{ | \s | : \s \in \P } = \max \set{ \lambda > 0 : \frac{1}{\lambda} \cdot \vone_m \in \Newton } = \lct{\sM}, \end{equation} an observation (well-known among experts) that reveals the first of many deep connections between $F$-pure and log canonical thresholds.  In order to precisely state this relationship, we review the process of \emph{reduction to positive characteristic.}

\subsection{On reduction to positive characteristic}

If $f \in \mathbb{Q}[x_1, \cdots, x_m]$, we may reduce the coefficients of $f$ modulo $p \gg 0$ to obtain a \emph{family of positive characteristic models} $\set{ f_p }$ of $f$ over the finite fields  $\mathbb{F}_p$ for all but finitely many $p$.  Instead suppose that $f \in \mathbb{C}[x_1, \cdots, x_m]$ but does not have rational coefficients.  Let $A$ be a finitely-generated  $\mathbb{Z}$-algebra containing the coefficients of $f$, so that $f \in A[x_1, \cdots, x_m]$.  For such an algebra, $A / \mu$ is a finite field for every $\mu \in \mSpec A$, and all but finitely many primes appear in the set $\set{ \operatorname{char} A / \mu : \mu \in \mSpec A}$.  Let $f_{\mu}$ denote the image of $f$ in $\( A / \mu \)[x_1, \cdots, x_m]$.  If $f(\0) = 0$, we may enlarge $A$ (say, by adjoining the inverses of the coefficients of $f$) so as to assume that $\Support(f_{\mu}) = \Support(f)$ for every $\mu \in \mSpec A$.  We again call the set $\set{f_{\mu}: \mu \in \mSpec A}$ a family of positive characteristic models of $f$.  In Corollary \ref{Characteristic: C}, we justify some these assertions, and our main tool will be the following variant of Noether Normalization.

\begin{Lemma}
\label{Noether}
Let $A$ be a finitely-generated algebra over a domain $D$.  There exists non-zero element $N \in D$ such that $D_N \subseteq A_N$ factors as $D_N \subseteq D_N[z_1, \cdots, z_d] \subseteq A_N$, where  $z_1, \cdots, z_d$ are algebraically independent over $D_N$, and $D_N[z_1, \cdots, z_d] \subseteq A_N$ is finite.
\end{Lemma}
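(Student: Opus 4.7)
The plan is to reduce to the classical Noether normalization over the fraction field of $D$ and then ``spread out'' by inverting finitely many denominators.

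First, let $K = \operatorname{Frac}(D)$, and write $A = D[a_1, \ldots, a_n]$ as a finitely generated $D$-algebra. Then $A \otimes_D K$ is a finitely generated $K$-algebra, so the standard Noether Normalization Lemma (over the field $K$) produces elements $\zeta_1, \ldots, \zeta_d \in A \otimes_D K$ that are algebraically independent over $K$ and such that $A \otimes_D K$ is module-finite over $K[\zeta_1, \ldots, \zeta_d]$. Each $\zeta_i$ is of the form $a'_i/s_i$ with $a'_i \in A$ and $s_i \in D \setminus \{0\}$; after multiplying each $\zeta_i$ by $s_i$ (which preserves algebraic independence and the finiteness property up to enlarging the base), I may assume outright that $z_1, \ldots, z_d \in A$ and that $A \otimes_D K$ is finite over $K[z_1, \ldots, z_d]$. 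Algebraic independence of the $z_i$ over $D$ (equivalently, over any localization $D_N$) follows from their independence over $K$: any nontrivial polynomial relation with coefficients in $D_N$ could be rescaled to give one over $K$.

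Next, I clear denominators to descend the finiteness statement from $K$ to a suitable localization of $D$. For each generator $a_j$ of $A$, the fact that $a_j$ is integral over $K[z_1, \ldots, z_d]$ gives a monic polynomial identity
\[ a_j^{m_j} + c_{j,m_j-1}(z) \, a_j^{m_j -1} + \cdots + c_{j,0}(z) = 0 \]
with $c_{j,\ell}(z) \in K[z_1, \ldots, z_d]$. Only finitely many elements of $D$ appear in the denominators of the finitely many coefficients $c_{j,\ell}$, so there exists a single $N \in D \setminus \{0\}$ such that every $c_{j,\ell}(z)$ lies in $D_N[z_1, \ldots, z_d]$. Consequently each $a_j$ is integral over $D_N[z_1, \ldots, z_d]$, and hence $A_N = D_N[a_1, \ldots, a_n]$ is module-finite over $D_N[z_1, \ldots, z_d]$.

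Putting this together yields the chain $D_N \subseteq D_N[z_1, \ldots, z_d] \subseteq A_N$ with $z_1, \ldots, z_d$ algebraically independent over $D_N$ and the second inclusion finite, as required. The one subtle point — which I take to be the main (minor) obstacle — is verifying that the algebraic independence of the $z_i$ survives the passage from $A \otimes_D K$ back down to $A_N$; as noted, this is handled by a straightforward denominator-clearing argument, since $D \hookrightarrow D_N \hookrightarrow K$ are all injective ($D$ being a domain). The rest of the proof is entirely formal, making this essentially an application of classical Noether normalization together with the principle that finitely many integrality relations can be simultaneously cleared by inverting a single element of $D$.
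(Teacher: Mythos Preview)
Your argument is correct and follows essentially the same approach as the paper: the paper merely remarks that the lemma is obtained by applying classical Noether normalization to the inclusion $\operatorname{Frac}(D) \subseteq A \otimes_D \operatorname{Frac}(D)$ and then descending, which is precisely what you carry out in detail by clearing the finitely many denominators in the integral equations.
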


If $L = \operatorname{Frac} D$ and $R$ is the localization of $A$ at the non-zero elements of $D$, then Lemma \ref{Noether} can be obtained by applying the Noether Normalization theorem to the inclusion $L \subseteq R$.  See \cite{HochsterNoether} for an alternate proof that does not rely on Normalization for algebras over a field.

\begin{Corollary}  
\label{Characteristic: C}
Every maximal ideal of a finitely-gnerated $\mathbb{Z}$-algebra $A$ contains a prime $p$, and $A / \mu$ is a finite field for every maximal ideal $\mu \subseteq A$.  Furthermore, all but finitely many primes $p$ are contained in a maximal ideal of $A$.
\end{Corollary}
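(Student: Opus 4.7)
The plan is to derive both parts of the corollary from Lemma \ref{Noether}, together with the elementary principle that in any module-finite extension of domains $S \hookrightarrow T$, the ring $T$ is a field if and only if $S$ is (the nontrivial direction is a one-line Cayley--Hamilton argument). No auxiliary theory beyond this — in particular, no separate argument about number fields — will be needed.

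For the first two assertions, let $\mu \subseteq A$ be maximal, set $K := A/\mu$, and split by $\operatorname{char}(K)$. If $\operatorname{char}(K) = 0$, apply Lemma \ref{Noether} to $\mathbb{Z} \hookrightarrow K$ to obtain a nonzero $N \in \mathbb{Z}$ and a module-finite inclusion $\mathbb{Z}_N[z_1, \ldots, z_d] \hookrightarrow K_N$. Since $N$ is already a unit in the field $K$, one has $K_N = K$; the auxiliary principle then forces $\mathbb{Z}_N[z_1, \ldots, z_d]$ to itself be a field, which is impossible because $\mathbb{Z}_N$ always has maximal ideals $(p)$ for primes $p \nmid N$. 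Hence $K$ has prime characteristic $p$, forcing $p \in \mu$ and $\mu \cap \mathbb{Z} = (p)$ by primality. Now $K$ is a finitely generated $\mathbb{F}_p$-algebra that is a field, and applying Lemma \ref{Noether} once more — with $D = \mathbb{F}_p$, where the produced nonzero element is automatically a unit — gives a module-finite inclusion $\mathbb{F}_p[w_1, \ldots, w_{d'}] \hookrightarrow K$. The same principle forces $d' = 0$, and $K$ is module-finite over $\mathbb{F}_p$, hence finite.

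For the final assertion, apply Lemma \ref{Noether} to $\mathbb{Z} \hookrightarrow A$ to obtain a nonzero $N \in \mathbb{Z}$ and a module-finite inclusion $R := \mathbb{Z}_N[z_1, \ldots, z_d] \hookrightarrow A_N$. For any prime $p \nmid N$, the quotient $A_N/pA_N$ contains the nonzero module-finite subring $R/pR = \mathbb{F}_p[\bar z_1, \ldots, \bar z_d]$, so $A_N/pA_N \neq 0$ and admits a maximal ideal; its preimage in $A_N$ is a maximal ideal $\mu' \supseteq pA_N$. Setting $\mu := \mu' \cap A$, the injection $A/\mu \hookrightarrow A_N/\mu'$ into the finite field $A_N/\mu'$ (by the first two assertions applied to the finitely generated $\mathbb{Z}$-algebra $A_N$) exhibits $A/\mu$ as a finite domain, hence a field, so $\mu$ is maximal and contains $p$. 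Only the finitely many prime divisors of $N$ are potentially missed.

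The single delicate point is the exclusion of $\operatorname{char}(K) = 0$ in the first step. The elementary alternative — identifying $K$ as a number field via Zariski's lemma over $\mathbb{Q}$ and then showing separately that no number field is a finitely generated $\mathbb{Z}$-algebra, by embedding into a localization $\mathcal{O}_K[1/D]$ of the ring of integers and producing via prime-splitting an element of $K$ outside this subring — works but requires a genuinely independent argument. Routing everything through Lemma \ref{Noether} collapses this detour into the one-line observation that $\mathbb{Z}_N$ is never a field, and simultaneously provides the normalization needed for the third assertion.
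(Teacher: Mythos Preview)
Your proof is correct and follows essentially the same approach as the paper: both derive all three assertions from Lemma \ref{Noether} together with a basic integrality fact (you invoke the ``field iff field'' principle for module-finite extensions of domains, while the paper uses the equivalent Krull-dimension inequality and the Lying Over theorem). The differences are purely cosmetic.
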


\begin{proof}
Let $\mu \subseteq A$ be a maximal ideal.  If $\mu \cap \Z = 0$, then $A / \mu$ is also a finitely generated $\Z$-algebra.  By Lemma \ref{Noether}, $A / \mu = \( A / \mu \)_N$ is module finite over a polynomial ring with coefficients in $\Z_N$ for some $N$, so that $0 = \dim A / \mu \geq \dim \Z_N = 1$, a contradiction. If $p \in \mu$, then $A/\mu$ is finitely-generated over $\mathbb{F}_p$, and thus is module finite over a polynomial ring $\mathbb{F}_p[z_1, \cdots, x_d]$ by Lemma \ref{Noether}.  As $A / \mu$ is a field, dimension considerations force that $d = 0$, and thus $A / \mu$ is finite over $\mathbb{F}_p$.  Finally, consider a factorization $\Z_N \subseteq \Z_N[z_1, \cdots, z_d] \subseteq A_N$ as in Lemma \ref{Noether}.  Every $p$ not dividing $N$ generates a prime ideal in the polynomial ring $\Z_N[z_1, \cdots, z_d]$, and so by the Lying Over Theorem, there exists a prime (and hence maximal) ideal of $A$ not containing $N$ and lying over $p$. 
\end{proof}

\begin{Corollary}
\label{DenseCorollary}
Let $A$ be a finitely-generated algebra over a domain $D$.  Then, the inverse image of a dense set under the induced map $\Spec A \stackrel{\pi}{\to} \Spec D$ is also dense.
\end{Corollary}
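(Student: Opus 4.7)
The plan is to prove, given a dense $U \subseteq \Spec D$ and an arbitrary nonempty open $V \subseteq \Spec A$, that $\pi(V) \cap U \neq \emptyset$; the latter is equivalent to $V \cap \pi^{-1}(U) \neq \emptyset$, which establishes that $\pi^{-1}(U)$ is dense in $\Spec A$. Since $D$ is a domain, $\Spec D$ is irreducible, so any constructible subset of $\Spec D$ that contains the generic point contains a nonempty open subset; density of $U$ then guarantees $U$ meets such an open. Thus the crux is to argue that $\pi(V)$ is constructible and captures the generic point of each irreducible component of $\Spec D$ hit by $V$.

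First I would apply Lemma \ref{Noether} to obtain a nonzero $N \in D$ and algebraically independent $z_1, \ldots, z_d \in A_N$ such that the structure map factors as
\[ \Spec A_N \longrightarrow \Spec D_N[z_1, \ldots, z_d] \longrightarrow \Spec D_N \hookrightarrow \Spec D,\]
with the first map finite, the second a projection from relative affine space, and the third an open immersion into an open dense subset (using that $D$ is a domain). I would then verify the density-preservation property for each of these three building blocks in turn: the open immersion is trivial since $\Spec D_N$ is open dense in the irreducible $\Spec D$; the projection $\Spec D_N[z_1, \ldots, z_d] \to \Spec D_N$ sends each nonempty basic open $D(g)$ onto a nonempty open determined by the coefficients of $g$ (or, equivalently, is flat and hence open); and the finite map is closed and, away from nilpotents, surjective onto its image by lying over, so nonempty opens map to sets that contain nonempty opens of the image.

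Chaining these observations together shows that $\pi^{-1}(U) \cap \Spec A_N$ is dense in $\Spec A_N$ for any dense $U \subseteq \Spec D$. The remaining task is to account for primes of $A$ containing $N$, i.e., the complement $\Spec A \setminus \Spec A_N = V(N) \cap \Spec A$. For this I would proceed by Noetherian induction on $\dim \Spec D$: each minimal prime $\mathfrak{p}$ of $V(N) \cap \Spec A$ contracts to some prime of $D$ containing $N$, and the quotient $A/\mathfrak{p}$ is a finitely generated algebra over the domain $D/(D \cap \mathfrak{p})$, whose spectrum has strictly smaller dimension. The base case $\dim D = 0$ is trivial since $\Spec D$ is then a single point and every nonempty subset is dense.

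The main obstacle will be setting up the Noetherian induction cleanly so that the factorization from Lemma \ref{Noether} is applied uniformly to each irreducible piece of $\Spec A$; in particular, one must take care that finite extensions in the factorization play well with dense sets even when the source has multiple components, which is the one place where going-up and the finiteness of fibers are used in an essential way. An alternative, slicker route would be to replace the Noetherian induction by a direct appeal to Chevalley's theorem on constructible images, combined with the observation that if $A$ is a domain with $D \hookrightarrow A$ (as is the case in every application in this article), then $\pi$ sends the generic point of $\Spec A$ to the generic point of $\Spec D$, so the image of any nonempty open is a constructible set containing the generic point of $\Spec D$.
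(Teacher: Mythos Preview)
Your reduction to showing $\pi(V) \cap U \neq \emptyset$ for each nonempty basic open $V = \Spec A_f$ matches the paper's, as does the invocation of Lemma~\ref{Noether}. The paper, however, sidesteps your Noetherian induction entirely with one observation you are missing: rather than applying Lemma~\ref{Noether} once to $A$ and then struggling with the primes in $V(N)$, apply it to $A_f$ itself. Since $A_f$ is again finitely generated over $D$, Lemma~\ref{Noether} gives a factorization $D_N \subseteq D_N[z_1, \ldots, z_d] \subseteq A_{fN}$ (with $N$ now depending on $f$), and the composite $\Spec A_{fN} \to \Spec D_N$ is surjective by lying over. Density of $U$ furnishes a point of $U$ in the nonempty open $\Spec D_N$, and surjectivity lifts it into $\Spec A_{fN} \subseteq \Spec A_f$. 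That is the whole argument---three lines, no induction, no Chevalley.

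Your Noetherian induction, by contrast, has a real gap. When you pass to a minimal prime $\mathfrak{p}$ of $V(N)$ and replace $D$ by $D' = D/(D \cap \mathfrak{p})$, you would need $U \cap \Spec D'$ to be dense in $\Spec D'$ in order to invoke the induction hypothesis---but there is no reason for this. If $U$ is simply the generic point of $\Spec D$ (a perfectly good dense set), then $U$ meets no proper closed subscheme of $\Spec D$ whatsoever, and the induction returns nothing. In addition, $D$ is not assumed Noetherian or finite-dimensional, so inducting on $\dim D$ is not even available in the stated generality. Your Chevalley alternative is a valid route under the domain hypothesis on $A$ that you note (and which holds in every application in the paper), but it is heavier machinery than the two-line argument above.
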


\begin{proof}  
Let $\GG$ be dense in $\Spec D$.  It suffices to show that $\Spec A_f \cap \pi^{-1}( \GG)$ is non-empty for every non-zero $f \in A$.  As $A$ is finitely generated over $D$, so is $A_f = A[T] / (1-Tf)$.  Consider a factorization $D_N \subseteq D_N[z_1, \cdots, z_d] \subseteq A_{fN}$ as in Lemma \ref{Noether}.  By the Lying Over Theorem, $\Spec A_{fN} \stackrel{\pi}{\to} \Spec D_N$ is surjective.  As $\GG$ is dense, $\GG \cap \Spec D_N = \GG \cap \pi \( \Spec A_{fN} \)$ is non-empty. Consequently, $\Spec A_{fN} \cap \pi^{-1}(\GG)$, and hence $\Spec A_f \cap \pi^{-1} (\GG)$, is non-empty.
\end{proof}

\subsection{Connections with $F$-pure thresholds}

\begin{Notation}  
Let $A$ be a finitely generated $\Z$ sub-algebra of $\mathbb{C}$.  We use $S_A$ to denote the subring $A[x_1, \cdots, x_m] \subseteq S$; note that $\mathbb{C} \tens_{A} S_A = S$.  If $\mu$ is a maximal ideal of $A$, $S_A(\mu)$ denotes the polynomial ring $S_A \tens_A A / \mu = S_A / \mu S_A = \left( A / \mu \right) [x_1, \cdots, x_m]$.  By Corollary \ref{Characteristic: C},  $\characteristic S_A(\mu) > 0$.  For $g \in S_A$,  $g_{\mu}$ denotes the image of $g$ in $S_A(\mu)$.  Finally, $\m$ will denote the ideal generated by the variables $x_1, \cdots, x_m$ in the polynomial rings  $S, S_A$, and $S_A ( \mu) $.  
\end{Notation}

It is an important fact that the $F$-pure  (respectively, log canonical) threshold of a polynomial may also be defined in terms of its associated \emph{test ideals} (respectively, \emph{multiplier ideals}).  Theorem \ref{LimitsofFPT: T} below was first observed in \cite[Theorem 3.4]{MTW2005}, and follows from deep theorems in \cite{Karen2000, HY2003} relating test ideals and multiplier ideals.  We refer the reader to the author's thesis for a  detailed discussion of how to deduce Theorem \ref{LimitsofFPT: T} from the results of \cite{Karen2000, HY2003}.

\begin{Theorem}  
\label{LimitsofFPT: T}
Let $f \in S$ be a polynomial with $f(\0) = 0$.  Then, for every finitely generated $\mathbb{Z}$-algebra $A \subseteq \mathbb{C}$ with $f \in S_A$, the following hold:
\begin{enumerate}
\item There exists a dense open set $U \subseteq \Spec A$ such that $\fpt{f_{\mu}} \leq \lct{f}$ for every maximal ideal $\mu \in U$.
\item For every $0 < \lambda < \llct{f}$, there exists a dense open set $U_{\lambda} \subseteq \Spec A$ such that $\lambda \leq \fpt{f_{\mu}} \leq \lct{f}$ for every maximal ideal $\mu \in U_{\lambda}$.
\end{enumerate}
\end{Theorem}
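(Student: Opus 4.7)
The plan is to translate both invariants into the language of multiplier ideals and generalized test ideals, and then apply the reduction--modulo--$p$ comparison theorem of Hara--Yoshida.

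First I would recall the ideal-theoretic reformulations. For each rational $\lambda > 0$ there is a multiplier ideal $\J(f^\lambda) \subseteq S$ (defined via log resolutions) and, for each $\mu \in \mSpec A$, a generalized test ideal $\tau(f_\mu^\lambda) \subseteq S_A(\mu)$. These satisfy
\[
\lct{f} \;=\; \sup \set{\lambda > 0 : \J(f^\lambda) \not\subseteq \m}
\qquad \text{and} \qquad
\fpt{f_\mu} \;=\; \sup \set{\lambda > 0 : \tau(f_\mu^\lambda) \not\subseteq \m}.
\]
Furthermore $\lct{f} \in \mathbb{Q}$, and right-continuity of multiplier ideals together with the fact that $\lct{f}$ is a jumping number of $\J(f^\bullet)$ gives $\J(f^{\lct{f}}) \subseteq \m$ locally at $\0$.

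The key technical input is the Hara--Yoshida comparison theorem \cite{HY2003} (building on \cite{Karen2000}): for every rational $\lambda > 0$, after possibly enlarging $A$, there is a dense open subset $V_\lambda \subseteq \Spec A$ such that
\[
\J(f^\lambda) \cdot S_A(\mu) \;=\; \tau(f_\mu^\lambda)
\]
for every maximal $\mu \in V_\lambda$. Granting this, the rest is formal bookkeeping.

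For $(1)$, apply the comparison theorem at $\lambda = \lct{f}$. After enlarging $A$ to contain a finite set of generators of $\J(f^{\lct{f}})$, the containment $\J(f^{\lct{f}}) \subseteq \m \cdot S_A$ holds integrally and therefore descends to $\tau(f_\mu^{\lct{f}}) \subseteq \m$ for every $\mu \in V_{\lct{f}}$, forcing $\fpt{f_\mu} \leq \lct{f}$ on $U := V_{\lct{f}}$. For $(2)$, given $0 < \lambda < \lct{f}$, choose a rational $\lambda'$ with $\lambda < \lambda' < \lct{f}$. Then $\J(f^{\lambda'}) \not\subseteq \m$, i.e., $1 \in \J(f^{\lambda'}) \cdot S_{\m}$. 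Writing $1$ as an explicit $S_{\m}$-linear combination of generators of $\J(f^{\lambda'})$ and enlarging $A$ if needed so that this identity holds in $(S_A)_{\m}$, the identity persists after reduction, giving $\tau(f_\mu^{\lambda'}) \not\subseteq \m$ for every $\mu$ in a dense open $V_{\lambda'}$, hence $\fpt{f_\mu} \geq \lambda' > \lambda$ there. Since $A \subseteq \mathbb{C}$ is a domain, $\Spec A$ is irreducible, so $U_\lambda := V_{\lambda'} \cap V_{\lct{f}}$ is dense open and satisfies both bounds simultaneously.

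The main obstacle is the Hara--Yoshida comparison theorem itself, which encodes a deep interplay between log resolutions in characteristic zero and Frobenius-theoretic methods in positive characteristic; here it is invoked as a black box. A minor subsidiary point is verifying that the containments $\J(f^\lambda) \subseteq \m$ and $1 \in \J(f^{\lambda'}) \cdot S_{\m}$ descend to the reductions modulo $\mu$; both are handled by choosing finite $A$-module generators so that the relevant equalities or containments hold over $A$ itself, after possibly enlarging $A$, and are therefore preserved under the quotient $S_A \to S_A(\mu)$.
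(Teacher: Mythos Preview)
Your proposal is correct and is precisely the argument the paper has in mind: the paper does not supply its own proof of this theorem, but instead cites \cite{Karen2000, HY2003} (and refers to the author's thesis for details), noting that the statement ``follows from deep theorems \ldots\ relating test ideals and multiplier ideals.'' Your outline via the Hara--Yoshida comparison $\J(f^{\lambda})\cdot S_A(\mu)=\tau(f_{\mu}^{\lambda})$ on a dense open set, applied once at $\lambda=\lct{f}$ and once at a rational $\lambda'<\lct{f}$, is exactly that deduction; the only cosmetic point is that the ``enlarge $A$'' steps must be reconciled with the quantifier ``for every $A$,'' which is the standard spreading-out argument (and is the content behind Remark~\ref{OneCoefficientAlgebra: R} in the analogous situation).
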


We stress that the open set $U_{\lambda}$ depends on $\lambda$, and often shrinks as $\lambda$ increases.

\begin{Remark}
\label{SimpleLimits: R}
Suppose that $f$ has integer coefficients.  If $f_p$ denotes the image of $f$ in $\mathbb{F}_p[x_1, \cdots, x_m]$, the statements of Theorem \ref{LimitsofFPT: T} become 
\[ \fpt{f_p} \leq \lct{f} \text{ for } p \gg 0 \text{ and } \lim_{p \to \infty} \fpt{f_p} \leq \lct{f}.\]
\end{Remark}

The behavior illustrated in Remark \ref{SimpleLimits: R} is apparent in Example \ref{CuspExample: E}, which also shows  that $\fpt{\(x^2 + y^3\)_p} = \lct{x^2 + y^3}$ for a dense, though not \emph{open}, set of primes in $\Spec \mathbb{Z}$.  This  kind of behavior is of particular interest, and motivates the following definition.

\begin{Definition}
\label{LCDFPT: D}
Let $f \in S$ with $f(\0)=0$.  We say \emph{log canonicity equals dense $F$-purity} for $f$ if for every finitely-generated $\mathbb{Z}$-algebra $A \subseteq \mathbb{C}$ with $f \in S_A$, there exists a dense subset $W \subseteq \Spec A$ such that $\fpt{f_{\mu}} = \lct{f}$ for every maximal ideal $\mu \in W$.   If $W$ is open in $\Spec A$, we say that log canonicity equals \emph{open} $F$-purity for $f$.
\end{Definition}

\begin{Remark}
\label{OneCoefficientAlgebra: R}
To show that log canonicity equals (open/dense) $F$-purity for $f$, it suffices to produce a \emph{single} finitely-generated $\mathbb{Z}$-algebra $A$ satisfying the conditions of Definition \ref{LCDFPT: D}.  We refer the reader to the author's thesis for a detailed verification of this.
\end{Remark}

\begin{Remark}
\label{TerminologyJustification: R}
  In the study of  \emph{singularities of pairs}, the terms ``log canonical'' and ``$F$-purity'' have their own independent meanings.  Indeed, one defines the notion of log singularities for \emph{pairs} $\pair{S}{f}{\lambda}$ via resolution of singularities (or via integrability conditions related to those in Definition \ref{lct: D}) \cite{Laz2004}.  Additionally, we have that \[ \lct{f} = \sup \set{ \lambda > 0 : \pair{S}{f}{\lambda} \text{ is log canonical at $\0$}},\] which justifies the use of the term ``log canonical threshold.''    In the positive characteristic setting, one defines the notion of $F$-purity for pairs via the Frobenius morphism, and we again have that the $F$-pure threshold of a polynomial is the supremum over all parameters such that the corresponding pair is $F$-pure \cite{HW2002, TW2004}.  We say that the pair $\pair{S}{f}{\lambda}$ is of \emph{dense $F$-pure type} if for every (equivalently, for some) finitely generated $\mathbb{Z}$-algebra $A \subseteq \mathbb{C}$ with $f \in S_A$, there exists a dense set $W \subseteq \Spec A$ such that the pair $\pair{S_A(\mu)}{f_{\mu}}{\lambda}$ is $F$-pure for every maximal ideal $\mu \in W$.  It is shown in \cite{HW2002, Tak2004} that if $\pair{S}{f}{\lambda}$ is log canonical, then it is also of dense $F$-pure type.

 It is an important, yet easy to verify, property of log canonicity that $\pair{S}{f}{\lct{f}}$ is log canonical at $\0$.  Consequently, $\pair{S}{f}{\lambda}$ is log canonical if and only if $0 \leq \lambda \leq \lct{f}$.  In prime characteristic, that a pair is  $F$-pure at the threshold is shown in  \cite{Hara2006, Singularities}, and it follows that the reductions $\pair{S_A(\mu)}{f_{\mu}}{\lambda}$ are $F$-pure if and only if $0 \leq \lambda \leq \fpt{f_{\mu}}$.  Examining the definitions, we reach the following conclusion:  To show that log canonicity is equivalent to dense $F$-pure type for pairs $\pair{S}{f}{\lambda}$, it suffices to show that $\fpt{f_{\mu}} = \lct{f}$ for all maximal $\mu$ in some dense subset of $\Spec A$, which justifies our choice of terminology in Definition \ref{LCDFPT: D}.
\end{Remark}

\begin{Proposition} \cite[Theorem 2.5]{Fed1983}
\label{bigmonomialLCDFPT: P}
Let $f \in S$ be a polynomial with $\Support(f) = \sM$,and let $\alpha$ continue to denote the common value $\fpt{\sM} = \lct{\sM}$.  Let $A \subseteq \mathbb{C}$ be a finitely-generated $\mathbb{Z}$-algebra such that $f \in S_A$.  If $\alpha > 1$ and $\fL$ has an isolated singularity at $\0$, there exists a dense open subset $U \subseteq \Spec A$ such that $\fpt{f_{\mu}} = 1$ for every maximal ideal $\mu \in U$.  In particular, $\lct{f} = 1$, and log canonicity  equals open $F$-purity for $f$.
\end{Proposition}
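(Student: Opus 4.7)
The plan is to couple Lemma \ref{Fedderfpt<1: L}---which forces the residue characteristic $p$ to lie below a fixed bound whenever $\fpt{f_{\mu}} < 1 < \alpha = \fpt{\sM}$ and $\fL$ has an isolated singularity---with the upper bound $\fpt{f_{\mu}} \leq \lct{f} \leq 1$ furnished by Theorem \ref{LimitsofFPT: T}(1) and Proposition \ref{lctBP: P}. Squeezing between these will give $\fpt{f_{\mu}} = 1$ on a dense open subset of $\Spec A$, and all the remaining claims of the proposition follow at once.

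First I would enlarge $A$ (harmless by Remark \ref{OneCoefficientAlgebra: R}) so that the Jacobian certificate for the isolated singularity of $\fL$ descends to every reduction. Since $\fL$ has an isolated singularity at $\0$ over $\mathbb{C}$, there exist an integer $N \geq 1$ and polynomials $h_{ij} \in S$ with $x_i^N = \sum_{j=1}^m h_{ij} \cdot \partial \fL/\partial x_j$ for each $i$. Adjoining the finitely many coefficients of the $h_{ij}$ to $A$, and inverting in $A$ the finitely many primes dividing some nonzero entry of the exponent matrix of $\sM_{\LL}$ (so that the formal partial derivatives of $\( f_{\LL} \)_{\mu}$ do not degenerate), the containment $\( x_1^N, \ldots, x_m^N \) \subseteq \bigl( \partial \( f_{\LL} \)_{\mu}/\partial x_1, \ldots, \partial \( f_{\LL} \)_{\mu}/\partial x_m \bigr)$ holds in $S_A(\mu)$ for every maximal $\mu \in \Spec A$.

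Next I would form the dense open $U := U_1 \cap U_2 \subseteq \Spec A$, where $U_1$ is the dense open on which $\fpt{f_{\mu}} \leq \lct{f}$ supplied by Theorem \ref{LimitsofFPT: T}(1), and $U_2$ is the complement of the finitely many closed subsets of $\Spec A$ corresponding to residue characteristics $p \leq N\alpha/(\alpha-1)$; by Corollary \ref{Characteristic: C}, $U_2$ is dense and open. For any maximal $\mu \in U$ of residue characteristic $p$, we have $\fpt{f_{\mu}} \leq \lct{f} \leq 1$. If this inequality were strict, then Lemma \ref{Fedderfpt<1: L} applied with $e = 1$ (valid because $p > N$) would force $p < N\alpha/(\alpha-1)$, contradicting $\mu \in U_2$. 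Hence $\fpt{f_{\mu}} = 1$ for every maximal $\mu \in U$, and combined with Theorem \ref{LimitsofFPT: T}(1) this forces $\lct{f} = 1$; log canonicity then equals open $F$-purity for $f$ by Definition \ref{LCDFPT: D} together with Remark \ref{OneCoefficientAlgebra: R}.

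The main obstacle is the descent step in the second paragraph: one must simultaneously clear denominators in the Jacobian relation certifying isolated singularity and exclude the small primes at which monomial partial derivatives degenerate, so that $\( f_{\LL} \)_{\mu}$ retains an isolated singularity for every maximal $\mu$ in a dense open of $\Spec A$. Once this bookkeeping is in place, the remainder of the proof is just the effective bound of Lemma \ref{Fedderfpt<1: L} combined with the semicontinuity content of Theorem \ref{LimitsofFPT: T}(1).
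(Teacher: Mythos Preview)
Your proposal is correct and follows essentially the same route as the paper's proof: enlarge $A$ so the Jacobian containment $(x_1^N,\dots,x_m^N)\subseteq(\partial \fL/\partial x_i)$ descends to every reduction, invoke Lemma \ref{Fedderfpt<1: L} to bound the residue characteristic whenever $\fpt{f_\mu}<1$, and intersect with the open set from Theorem \ref{LimitsofFPT: T}. The paper differs only cosmetically---it uses $\fpt{f_\mu}\leq 1$ directly (Remark \ref{FTleq1Remark}) rather than routing through $\lct{f}$, and it omits your (unnecessary) inversion of exponent-primes since formal differentiation commutes with reduction; the one small addition you should make is to also invert the coefficients of $f$ in $A$ so that $\Support(f_\mu)=\sM$, as Lemma \ref{Fedderfpt<1: L} requires this.
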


\begin{Remark}
Proposition \ref{bigmonomialLCDFPT: P} generalizes \cite[Theorem 2.5]{Fed1983}.  By Remark \ref{2dimS: R}, the assumption that $\fL$ have an isolated singularity is unnecessary in dimension two.
\end{Remark}

\begin{proof}[Proof of Proposition \ref{bigmonomialLCDFPT: P}]

As discussed in Remark \ref{2dimS: R}, it suffices to produce a single algebra $A$ satisfying the conditions of the proposition.  By hypothesis,  \begin{equation} \label{isosingconsequence: e} (x_1^N, \cdots, x_m^N) \subseteq \( \frac{\partial \fL}{\partial x_1}, \cdots, \frac{ \partial\fL}{\partial x_m} \) \end{equation} for some $N \geq 1$.  Let $A$ be the finitely-generated $\mathbb{Z}$-subalgebra of $\mathbb{C}$ obtained by adjoining to $\mathbb{Z}$ the coefficients of $f$ and their inverses (so that $\Support{f_{\mu}} = \sM$ for every $\mu \in \Spec A$), and well as all of the coefficients needed to express every $x_i^N$ as a linear combination of the partial derivatives of $\fL$.  By construction, $f \in S_A$, and \eqref{isosingconsequence: e} also holds in $S_A$, and hence in $S_A(\mu)$ for every maximal ideal $\mu \in \Spec A$.  

Let $\mu \in \Spec A$ be a maximal ideal.  If $\fpt{f_{\mu}} < 1$, it follows from Lemma \ref{Fedderfpt<1: L} that $\operatorname{char} A / \mu < N \cdot \frac{\alpha}{\alpha - 1}$.  By Corollary \ref{Characteristic: C},  $\set{ \mu : \operatorname{char} A / \mu > N \cdot \frac{\alpha}{\alpha -1 }}$ is the set of all maximal ideals in some non-empty open set $U_{\circ}$ of $\Spec A$, and the first claim follows.  Next, choose a dense open set $U \subseteq \Spec A$ satisfying the conditions of Theorem \ref{LimitsofFPT: T}.  For every $\mu \in U_{\circ} \cap U$, $1 = \fpt{f_{\mu}} \leq \lct{f} \leq \min \set{1, \alpha} = 1$, and so we are done.
\end{proof}

\begin{Lemma}
\label{AlternateLemma}
Let $f \in S$ with $\Support(f) = \sM$.  Then, log canonicity equals dense $F$-purity for $f $ if there exists a finitely-generated $\Z$-algebra $A \subseteq \mathbb{C}$ with $f \in S_A$, an infinite set of primes $\GG$, and for every $p \in \GG$ a set $W_p$ satisfying the following conditions:  
\begin{enumerate}
\item $W_p$ is a dense in $\pi^{-1}(p)$, where $\Spec A \stackrel{\pi}{\longrightarrow} \Spec \Z$ denotes the map induced by  $\Z \subseteq A$.
\item $\fpt{f_{\mu}} = \min \set{1, \fpt{\sM}}$ for every maximal ideal $\mu \in W_p$.
\end{enumerate}
\end{Lemma}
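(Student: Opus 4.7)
The plan is to assemble, for the given algebra $A$, a dense subset $W \subseteq \Spec A$ on which $\fpt{f_\mu} = \lct{f}$. Producing such a $W$ for a single algebra suffices by Remark \ref{OneCoefficientAlgebra: R}.

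First, apply part (1) of Theorem \ref{LimitsofFPT: T} to $A$ to obtain a dense open set $U \subseteq \Spec A$ such that $\fpt{f_\mu} \leq \lct{f}$ for every maximal ideal $\mu \in U$. Recall that $\fpt{\sM} = \lct{\sM}$ by \eqref{allequal: e} and that $\lct{f} \leq \min\set{1, \lct{\sM}}$ by Proposition \ref{lctBP: P}. For any maximal ideal $\mu \in W_p \cap U$, hypothesis (2) then forces the sandwich
\[ \min\set{1, \fpt{\sM}} = \fpt{f_\mu} \leq \lct{f} \leq \min\set{1, \lct{\sM}} = \min\set{1, \fpt{\sM}}, \]
so that $\fpt{f_\mu} = \lct{f}$. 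I therefore set $W := \bigcup_{p \in \GG} (W_p \cap U) \subseteq \Spec A$; by construction every maximal ideal of $W$ satisfies $\fpt{f_\mu} = \lct{f}$.

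It remains only to verify that $W$ is dense in $\Spec A$. Since the proper closed subsets of $\Spec \Z$ are finite, any infinite set of primes, in particular $\GG$, is dense in $\Spec \Z$, and Corollary \ref{DenseCorollary} then gives that $\pi^{-1}(\GG)$ is dense in $\Spec A$. Now take any non-empty open $V \subseteq \Spec A$; the intersection $V \cap U$ is a non-empty open set, so the density of $\pi^{-1}(\GG)$ produces some $p \in \GG$ for which $V \cap U \cap \pi^{-1}(p)$ is a non-empty relatively open subset of $\pi^{-1}(p)$. Since $W_p$ is dense in $\pi^{-1}(p)$, this forces $W_p \cap V \cap U \neq \emptyset$, whence $W \cap V \neq \emptyset$, as desired.

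The content of the lemma is essentially a topological bookkeeping exercise, and there is no genuinely hard step; what modest care is required is in ensuring that the fiberwise density of each $W_p$ inside $\pi^{-1}(p)$ survives intersection with the global open set $U$ supplied by Theorem \ref{LimitsofFPT: T}, which is immediate because $V \cap U$ is still a non-empty Zariski open set and so meets the dense family of fibers over $\GG$.
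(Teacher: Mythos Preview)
Your proof is correct and follows essentially the same approach as the paper: both obtain the dense open $U$ from Theorem \ref{LimitsofFPT: T}, use Corollary \ref{DenseCorollary} to see that $\pi^{-1}(\GG)$ is dense, and establish the equality $\fpt{f_\mu}=\lct{f}$ via the identical sandwich argument using Proposition \ref{lctBP: P} and \eqref{allequal: e}. The only cosmetic difference is the order of operations: the paper first sets $W=\bigcup_p W_p$, argues it is dense via $\overline{W}\supseteq \pi^{-1}(\GG)$, and then notes $U\cap W$ is dense (dense meets dense open), whereas you build $W=\bigcup_p(W_p\cap U)$ directly and verify density by an explicit open-set argument.
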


\begin{proof}   
Let $W = \bigcup_{p \in \GG} W_p$.  As $W_p$ is dense in $\pi^{-1}(p)$,  $\overline{W_p} = \pi^{-1}(p)$, and thus \begin{equation} \label{closure} \overline{W} \supseteq \bigcup_{p \in \GG} \overline{W_p} = \bigcup_{p \in \GG} \pi^{-1}(p) = \pi^{-1}(\GG). \end{equation} 

By Corollary \ref{DenseCorollary}, $\pi^{-1}(\GG)$ is dense in $\Spec A$, and applying \eqref{closure} shows that $W$ is dense as well.  Let $U \subseteq \Spec A$ be the dense open set given by Theorem \ref{LimitsofFPT: T}.  As $W$ is dense and $U$ is dense and open, it follows that $U \cap W$ is dense in $\Spec A$.  Furthermore, for $\mu \in U \cap W$, 
\begin{equation} \label{otherinequality} \lct{f} \leq \min \set{1,\lct{\sM}} = \min \set{1,\fpt{\sM}} = \fpt{f_{\mu}} \leq \lct{f}.\end{equation}  
Indeed, the leftmost inequality in \eqref{otherinequality} holds by Proposition \ref{lctBP: P}, the first equality by \eqref{allequal: e}, the second equality by our assumption on $W$, and the rightmost inequality by the defining property of $U$.  We conclude from \eqref{otherinequality} that $\fpt{f_{\mu}} = \lct{f}$ for every maximal ideal $\mu$ in the dense subset  $U \cap W$ of $\Spec A$, and the claim follows.
\end{proof}

\begin{Theorem}
\label{UniquePointTheorem}
 If $\P$ contains a unique maximal point $\eeta$, then log canonicity equals dense $F$-purity for every polynomial $f \in S$ with $\Support(f) = \sM$.
\end{Theorem}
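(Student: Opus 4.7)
The plan is to verify the hypotheses of Lemma \ref{AlternateLemma} for an arbitrary polynomial $f \in S$ with $\Support(f) = \sM$. First I would take $A \subseteq \mathbb{C}$ to be the $\Z$-algebra generated by the coefficients of $f$ and their inverses, so that $f \in S_A$ and $\Support(f_{\mu}) = \sM$ for every maximal ideal $\mu \subseteq A$. Write $\alpha = \fpt{\sM} = |\eeta|$; by Remark \ref{RationalCoordinates: R}, the unique maximal point $\eeta$ has rational coordinates. The goal is to exhibit an infinite set $\GG$ of primes $p$ such that $\fpt{f_{\mu}} = \min\{1, \alpha\}$ holds for every maximal ideal $\mu$ in the fiber $\pi^{-1}(p)$; one may then take $W_p = \pi^{-1}(p)$, which is trivially dense in itself.

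If $\alpha \leq 1$, then $\eta_1 + \cdots + \eta_n \leq 1$, and Lemma \ref{ConstantExpansionLemma}(1) supplies infinitely many primes $p$ for which the base-$p$ digits of the entries of $\eeta$ add without carrying at every position, i.e., $L = \infty$ in the notation of Theorem \ref{MainTheorem}. Part (1) of that theorem then gives $\fpt{f_{\mu}} = \fpt{\sM} = \alpha = \min\{1, \alpha\}$ for every maximal ideal $\mu$ of residue characteristic $p$.

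In the complementary case $\alpha > 1$, Lemma \ref{ConstantExpansionLemma}(2) instead produces infinitely many primes $p$ with $\digit{\eta_1}{1} + \cdots + \digit{\eta_n}{1} \geq p$. Since the convention $\digit{\cdot}{0} = 0$ renders the $d = 0$ constraint in the definition of $L$ vacuous, these primes force $L = 0$, and part (2) of Theorem \ref{MainTheorem} yields
\[ \fpt{f_{\mu}} \geq \tr{\eta_1}{0} + \cdots + \tr{\eta_n}{0} + p^{-0} = 0 + 1 = 1. \]
Combined with the universal bound $\fpt{f_{\mu}} \leq 1$ from Remark \ref{FTleq1Remark}, this delivers $\fpt{f_{\mu}} = 1 = \min\{1, \alpha\}$, as required.

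I do not expect a substantive obstacle: the proof is essentially a matter of pairing the two halves of Lemma \ref{ConstantExpansionLemma} with the two halves of Theorem \ref{MainTheorem}, with Lemma \ref{AlternateLemma} absorbing the passage between fixed-characteristic and mixed-characteristic settings. The one mildly clever point is the $\alpha > 1$ case, where one must notice that the degenerate value $L = 0$ makes the lower bound in Theorem \ref{MainTheorem}(2) collapse to $0 + 1 = 1$, exactly saturating the universal upper bound $\fpt{f_{\mu}} \leq 1$ and hitting $\min\{1, \alpha\}$ on the nose.
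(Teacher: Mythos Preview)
Your proposal is correct and follows essentially the same approach as the paper: split into the cases $|\eeta| \leq 1$ and $|\eeta| > 1$, invoke the two halves of Lemma \ref{ConstantExpansionLemma} to produce infinitely many good primes, apply the two halves of Theorem \ref{MainTheorem} to compute $\fpt{f_{\mu}}$ on each fiber, and finish with Lemma \ref{AlternateLemma} using $W_p = \pi^{-1}(p)$. The only cosmetic difference is that the paper phrases the $|\eeta|>1$ case using the stronger (but unnecessary) condition that carrying occurs at \emph{every} digit, whereas you correctly observe that carrying at the first digit already forces $L=0$; you should also note, as the paper does via Corollary \ref{Characteristic: C}, that all but finitely many primes lie in the image of $\pi$, so that your infinite set $\GG$ survives intersection with this image.
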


\begin{proof}  Let $A$ be such that $f \in S_A$ and consider the map $\Spec A \stackrel{\pi}{\to} \Spec \Z$. After enlarging $A$, we may assume that $\Support(f_{\mu}) = \sM$ for every $\mu \in \mSpec A$.  If $|\eeta| \leq 1$, let $\GG$ consist of all primes $p$ such $\pi^{-1}(p)$ is non-empty, and such that the entries of $\eeta$ add without carrying (in base $p$).  By Corollary \ref{Characteristic: C} and Lemma \ref{ConstantExpansionLemma}, $^{\#} \GG = \infty$, and it follows from Theorem \ref{MainTheorem} that $\fpt{f_{\mu_p}} = |\eeta| = \fpt{\sM}$ for all $p \in \GG$ and $\mu_p \in \pi^{-1}(p)$. If $| \eeta | > 1$, instead let $\GG$ denote the set of all primes $p$ such that $\pi^{-1}(p)$ is non-empty and \begin{equation} \label{condition} \digit{\eta_1}{e} + \cdots + \digit{\eta_n}{e} \geq p \text{ for every } e \geq 1. \end{equation} Again, Corollary \ref{Characteristic: C} and Lemma \ref{ConstantExpansionLemma} imply $^{\#} \GG = \infty$, and \eqref{condition} allows us to apply Theorem \ref{MainTheorem} with $L = 0$.  Thus, $\fpt{f _{\mu_p}} \geq 1$ for every $p \in \GG$ and $\mu_p \in \pi^{-1}(p)$, and Remark \ref{RationalityRemark} shows that equality must hold.  If $W_p:= \pi^{-1}(p)$, we see that $A, \LLambda$, and $W_p$ satisfy the hypotheses of Lemma \ref{AlternateLemma}, and so we are done.
\end{proof}

\begin{Theorem} 
\label{VeryGeneralTheorem}
Log canonicity equals dense $F$-purity for every polynomial in $S$ whose coefficients are algebraically independent over $\mathbb{Q}$.
\end{Theorem}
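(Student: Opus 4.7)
The plan is to reduce Theorem \ref{VeryGeneralTheorem} to Lemma \ref{AlternateLemma} by producing, for infinitely many primes $p$, a dense subset $W_p \subseteq \pi^{-1}(p)$ on which $\fpt{f_\mu}$ matches $\beta := \min\{1, \fpt{\sM}\}$.  I would start by taking $A = \mathbb{Z}[u_1, \ldots, u_n]_{u_1 \cdots u_n} \subseteq \mathbb{C}$, which is a finitely generated $\mathbb{Z}$-algebra precisely because the $u_i$ are algebraically independent; the localization ensures $f \in S_A$ and $\Support(f_\mu) = \sM$ for every $\mu \in \mSpec A$, and the fibers $\pi^{-1}(p) = \Spec (\mathbb{F}_p[u_1, \ldots, u_n]_{u_1 \cdots u_n})$ are irreducible.

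The geometric input will be a rational point $\tilde{\eeta}$ of the slice $\P_\beta := \{\s \in \P : |\s| = \beta\}$.  This slice is always a non-empty rational polytope:  when $\beta = \fpt{\sM}$ it equals the face $\Pmax$ of $\P$; when $\beta = 1 < \fpt{\sM}$ it is the intersection of $\P$ with a rational hyperplane, non-empty because for any rational vertex $\eeta$ of $\Pmax$ the convex combination $\eeta/\fpt{\sM}$ lies in $\P_1 \cap \mathbb{Q}^n$.  I will choose such $\tilde{\eeta}$ with common denominator $d$ and let $\GG$ be the (infinite, by Dirichlet together with Corollary \ref{Characteristic: C}) set of primes $p \equiv 1 \pmod{d}$ with $\pi^{-1}(p) \neq \emptyset$.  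Then $\k^\star := (p-1)\tilde{\eeta} \in \mathbb{N}^n$ satisfies $|\k^\star| = (p-1)\beta$ and $\EM \k^\star \vleq (p-1)\vone_m$.

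For each $p \in \GG$, I will examine the coefficient of the monomial $\x^{\EM \k^\star}$ in $f^{(p-1)\beta}$:
\[ \Theta_p(u) \; := \sum_{\k \vgeq \0,\; |\k| = (p-1)\beta,\; \EM \k = \EM \k^\star} \binom{(p-1)\beta}{\k}\, u^{\k} \; \in \; \mathbb{Z}[u_1, \ldots, u_n]. \]
Since $|\k^\star| = (p-1)\beta \leq p-1$, every entry $k_i^\star$ is a single base-$p$ digit and the digits sum to at most $p-1$, so they add without carrying; Lemma \ref{LucasLemma} then makes the $\k^\star$ coefficient non-zero modulo $p$, and since distinct indices $\k$ yield distinct monomials $u^{\k}$, $\Theta_p$ is a non-zero element of $A/pA$.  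Letting $W_p$ be the principal open locus of $\Theta_p$ in $\pi^{-1}(p)$, irreducibility of the fiber makes $W_p$ dense.

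For any maximal ideal $\mu \in W_p$, the coefficient of $\x^{\EM \k^\star}$ in $f_\mu^{(p-1)\beta}$ is $\Theta_p(\bar u_\mu) \neq 0$, and $\x^{\EM \k^\star} \notin \m^{[p]}$ because $\EM \k^\star \vleq (p-1)\vone_m$, so $f_\mu^{(p-1)\beta} \notin \m^{[p]}$.  Lemma \ref{OneConditionSplittingLemma} (with $e = 1$, $\lambda = \beta$, using $(p-1)\beta \in \mathbb{N}$) then forces $\fpt{f_\mu} \geq \beta$, while Remark \ref{RationalityRemark} gives the reverse bound; hence $\fpt{f_\mu} = \beta$, and Lemma \ref{AlternateLemma} concludes.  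The main obstacle will be treating both cases $\fpt{\sM} \leq 1$ and $\fpt{\sM} > 1$ uniformly; the trick is to place $\tilde{\eeta}$ on the slice $|\s| = \beta$ (not necessarily inside $\Pmax$), which keeps $(p-1)\beta \leq p-1$ and preserves the Lucas-type non-vanishing of the critical multinomial coefficient --- essentially extending Proposition \ref{GeneralFixedp: P} past its original constraint $\fpt{\sM} \leq 1$ by exploiting the algebraic independence of the coefficients.
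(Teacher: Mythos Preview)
Your proposal is correct and follows essentially the same route as the paper: both take $A=\mathbb{Z}[u_1,\dots,u_n]_{u_1\cdots u_n}$, pick a rational point $\ll$ on the slice $\P_{\beta}$ with $\beta=\min\{1,\fpt{\sM}\}$, use Dirichlet to find infinitely many $p$ with $(p-1)\ll\in\mathbb{N}^n$, observe that the coefficient $\Theta_p$ of $\x^{(p-1)\EM\ll}$ in $f^{(p-1)\beta}$ is a genuine nonzero polynomial in the $u_i$ modulo $p$ (since $(p-1)\beta\le p-1$ kills any multinomial vanishing), set $W_p$ to be its nonvanishing locus in the fiber, and conclude via Lemma~\ref{OneConditionSplittingLemma} and Lemma~\ref{AlternateLemma}. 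Your treatment of the existence of a rational $\tilde{\eeta}\in\P_\beta$ and of fiber irreducibility is slightly more explicit than the paper's appeal to Remark~\ref{RationalCoordinates: R}, but the argument is the same.
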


\begin{proof}  We assume that $f$ has support $\sM$ and algebraically independent coefficients $u_1, \cdots, u_n$.  If $A := \Z[u_1, \cdots, u_n]_{ \prod u_i } \subseteq \mathbb{C}$, then $ f \in S_A$ and $\Support(f_{\mu}) = \sM$ for all maximal ideals $\mu \subseteq A$.   Set $\gamma = \min \set{ 1, \fpt{\sM}}$.  By Proposition \ref{Monomialfpt: P},  $\fpt{\sM} = \max \set{ | \s | : \s \in \P}$, and we choose $\ll \in \P$ with $| \ll | = \gamma$.   By Remark \ref{RationalCoordinates: R} , we may assume that $\ll$ has rational coordinates.  If $\GG$ denotes the set of primes $p$ such that $(p-1) \cdot \ll \in \mathbb{N}^n$, then $^{\#} \GG = \infty$ by Dirichlet's theorem on primes in arithmetic progressions.  Fix a prime $p \in \GG$.   

It follows from the monomial theorem that $\x^{(p-1) \EM \ll}$ appears in $f^{(p-1) \gamma}$ with coefficient  \begin{equation} \label{coefficient} 0 \neq \Theta_{\ll,p}(u_1, \cdots, u_n) = \sum_{ \stackrel{| \k | = (p-1) \gamma}{\EM \k = (p-1) \cdot \EM \ll} } \binom{ (p-1) \cdot \alpha}{ \k } \uu^{\k} \in \Z[u_1, \cdots, u_n] \subseteq A. \end{equation} 

As $\gamma \leq 1$, $\binom{ (p-1) \cdot \gamma}{\k} \neq 0 \bmod p$ for each $\k$ in \eqref{coefficient}.  By hypothesis, $\Z[u_1, \cdots, u_n]$ is a polynomial ring, and it follows that $\Theta_{\ll,p}(\uu)$ induces a non-zero element  of the polynomial ring $\Z / p \Z [u_1, \cdots, u_n] \subseteq A/p A$.  Consider the map $\Spec A \stackrel{\pi}{\to} \Spec \Z$ induced by the inclusion $\Z \subseteq A$.  We have just shown that $W_p := D(\Theta_{\ll,p}(\uu)) \cap \pi^{-1}(p)$, is a dense (open) subset of the fiber $\pi^{-1}(p)$.  Let $\mu_p$ be a maximal ideal in $W_p$.  By definition, the image of $\Theta_{\ll,p}(\uu)$ is non-zero in $A / \mu_p$, and \eqref{coefficient} shows that $\x^{(p-1) \EM \ll}$ is contained in $\Support\( \(f_{\mu_p}\)^{(p-1) \gamma} \)$ but not in $\bracket{\m}{}$ (as $\ll \in \P$).  Thus,$\(f_{\mu_p} \)^{(p-1) \gamma} \notin \bracket{\m}{}$, which allows us to apply Lemma \ref{OneConditionSplittingLemma} (and Remark \ref{RationalityRemark}) to $f_{\mu_p} \in S_A(\mu_p)$ to conclude that 
$\fpt{ f_{\mu_p}} = \gamma = \min \set{ 1, \fpt{\sM}}$.  We see that $A, \GG$, and $W_p$ satisfy the conditions of Lemma \ref{AlternateLemma}, and so we are done.
\end{proof}

\begin{Remark}  An important difference between Theorem \ref{UniquePointTheorem} and Theorem \ref{VeryGeneralTheorem} is that $W_p = \pi^{-1}(p)$ in the former, while we only know that $W_p \subseteq \pi^{-1}(p)$ in the latter.  It would be interesting to investigate under what conditions $F$-pure thresholds remain constant over the fibers of certain distinguished primes in $\Spec \Z$.
\end{Remark}

\bibliographystyle{alpha}
\bibliography{refs}

\end{document}